\documentclass[reqno, 11pt]{amsart}

\usepackage[algoruled,lined,noresetcount,norelsize]{algorithm2e}

\usepackage{graphicx}
\usepackage{amsmath,amssymb,amsthm}
\usepackage{mathtools}
\usepackage{mathrsfs}
\usepackage{mathabx}\changenotsign
\usepackage{dsfont}
\usepackage{xcolor}
\usepackage[backref]{hyperref}
\hypersetup{
    colorlinks,
    linkcolor={red!60!black},
    citecolor={green!60!black},
    urlcolor={blue!60!black}
}
\usepackage[T1]{fontenc}
\usepackage{lmodern}
\usepackage[babel]{microtype}
\usepackage[english]{babel}

\usepackage{centernot}

\usepackage{geometry}
\numberwithin{equation}{section}
\numberwithin{figure}{section}

\usepackage{enumitem}

\theoremstyle{plain}
\newtheorem{thm}{Theorem}[section]
\newtheorem{prop}[thm]{Proposition}

\newtheorem{clm}[thm]{Claim}

\newtheorem{lemma}[thm]{Lemma}
\newtheorem{definition}[thm]{Definition}
\newtheorem{problem}[thm]{Problem}
\newtheorem{conjecture}[thm]{Conjecture}

\newcommand{\MBarrow}{\xrightarrow{MB}}
\newcommand{\MBnotarrow}{\centernot{\xrightarrow{MB}}}
\newcommand{\BMarrow}{\xrightarrow{BM}}
\newcommand{\MBequiv}{\stackrel{\text{\tiny MB}}{\sim}}

\title{Ramsey-type results for Maker-Breaker games}

\author[A. Allin]{Alexander Allin}
\author[J. Barkey]{Juri Barkey}
\author[D. Clemens]{Dennis Clemens}

\address{Hamburg University of Technology, Institute of Mathematics, Am Schwarzenberg-Campus 3, 21073 Hamburg, Germany}
\email{alexander.allin|juri.barkey|dennis.clemens@tuhh.de}

\begin{document}

\maketitle

\begin{abstract}
A graph $G$ is minimal Ramsey for a graph $H$ if every $2$-colouring of the edges of $G$ contains a monochromatic copy of $H$, but for every proper subgraph of $G$, there is a $2$-colouring that does not contain such a monochromatic copy. Characterizing minimal Ramsey graphs is a widely studied problem. Recent research in this field includes the characterization of the size of the set $\mathcal{M}_2(H)$ of all minimal Ramsey graphs for $H$, or finding the smallest minimum degree among all graphs in $\mathcal{M}_2(H)$.

In this paper, we introduce a game theoretic analogue of the above concept by considering the Maker-Breaker $H$-game on a graph $G$. In this game, two players, Maker and Breaker, alternately claim unclaimed edges of $G$, and Maker wins if in the end of the game the graph spanned by Maker's edges contains a copy of $H$. Otherwise, Breaker wins the game.
We call a graph $G$ winnable for $H$ if Maker has a winning strategy for the Maker-Breaker $H$-game on $G$, and we call it minimal winnable if additionally Breaker wins the $H$-game on every proper subgraph of $G$.
Along the lines of minimal-Ramsey theory, we 
characterize all graphs $H$ for which there exist infinitely many minimal winnable graphs, and we prove tight bounds for the smallest minimum degree among all these minimal winnable graphs. Amongst others, we obtain precise results for trees, cycles, cliques, and complete bipartite graphs. In general, we find many similarities between the Ramsey setting and the Maker-Breaker setting, but we also show substantial differences. 
\end{abstract}

\section{Introduction}

Given a positive integer $b$ and a hypergraph $(X,\mathcal{F})$, 
the $(1:b)$ \emph{Maker-Breaker game} on the hypergraph $(X,\mathcal{F})$ is played as follows.
Maker and Breaker claim unclaimed elements of the \emph{board} $X$ alternatingly,
where in every round, Maker is always allowed to claim $1$ element and Breaker is allowed to claim up to $b$ elements. If until the end of the game, Maker manages to claim all elements of a \emph{winning set} $F\in \mathcal{F}$, she wins the game. Otherwise, Breaker wins.

The research on Maker-Breaker games has a long history already, starting from fundamental papers
of e.g.~Chv\'atal and Erd\H{o}s~\cite{chvatal1978biased}, or Erd\H{o}s and Selfridge~\cite{erdos1973combinatorial}, and
in many cases these games have proven to have intriguing connections to extremal combinatorics, Ramsey theory or random graph theory; see 
e.g.~the books~\cite{beck2008combinatorial,hefetz2014positional}.
The focus of this paper is to find further connections to and comparisons with Ramsey-type results.
More precisely, along the line of research on Ramsey graphs we intend to study the structure of graphs
on which so-called Maker-Breaker $H$-games (to be defined later) can be won.

\subsection{Some background on Ramsey theory}

Given graphs $G$ and $H$ and a number $q$ of colours, we say that 
$G$ is \emph{$q$-Ramsey} for $H$, denoted by $G\rightarrow_q H$, if  the following holds:
in every colouring of the edges of $G$ with $q$ colours, we find a copy of $H$
that is monochromatic. Let us denote with $\mathcal{R}_q(H)$ the set of all $q$-Ramsey graphs for $H$.
Then the fundamental theorem of Ramsey~\cite{ramsey1987problem}
ensures that $\mathcal{R}_q(H) \neq \varnothing$ holds for every $q$ and $H$. 
Having thus shown the existence of Ramsey graphs,
it becomes a natural problem to better understand the structure of graphs within the set $\mathcal{R}_q(H)$.

For instance, one may ask how large a graph in $\mathcal{R}_q(H)$ needs to be at least.
More formally, given a graph $H$ and a positive integer $q$,
we define the \emph{$q$-Ramsey number} $r_q(H)$ to be the smallest integer $n$ such that there exists a complete graph $K_n$ with $K_n \rightarrow_q H$. 
A plethora of papers deals with such Ramsey numbers for various graphs $H$; for an overview see e.g.~\cite{conlon2015recent,xu2018ramsey}.
Probably the most interesting and challenging case is when $H$ is a complete graph. When $q=2$, it is already an open problem to determine $r_2(K_5)$ exactly,
while in general the best known bounds are of the form $(1-o(1)) \frac{\sqrt{2}}{e} t2^{t/2} \leq r_2(K_t) \leq 3.8^{t}$.
These bounds are due to Spencer~\cite{spencer1975ramsey} and Gupta, Ndiaye, Norin and Wei~\cite{gupta2024optimizing},
improving on a sequence of earlier results~\cite{conlon2009new,erdos1947some,erdos1935combinatorial,sah2023diagonal,thomason1988upper}, where the upper bound builds on the breakthrough paper of Campos, Griffiths, Morris and Shahasrabudhe~\cite{campos2026exponential}.

Next to minimizing the number of vertices of a Ramsey graph, one may also consider other graph parameters.
For instance, a surprising result of Folkman~\cite{folkman1970graphs} gives that for every positive integer $n$ there exists
a graph $G$ with clique number $\omega(G)=n$ such that $G\rightarrow_2 K_n$. This was later generalized to arbitrary graphs $H$ and arbitrary numbers of colours by Ne\v{s}et\v{r}il and Rödl~\cite{nevsetvril1976ramsey}.

\begin{thm}[Theorem 1 in~\cite{nevsetvril1976ramsey}] \label{thm:Ramsey:clique.number}
For every graph $H$ and every positive integer $q$
there exists a graph $G$ with $\omega(G)=\omega(H)$
such that $G\rightarrow_q H$.
\end{thm}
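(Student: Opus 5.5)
A natural route is the Nešetřil--Rödl amalgamation (``partite construction'') method. I would first reduce to an ordered, partite version of the statement. Let $h=|V(H)|$, and fix a graph $A$ with $A\rightarrow_q H$; by Ramsey's theorem we may take $A=K_N$ with $N=r_q(H)$. The goal is a graph $G$ with $G\rightarrow_q H$ in which every clique already lies inside a copy of $H$; this forces $\omega(G)=\omega(H)$. I would build $G$ as an $N$-partite graph with parts $V_1,\dots,V_N$, so that every copy of $H$ used in the construction is \emph{transversal}: it meets each part in at most one vertex, and its projection onto $\{1,\dots,N\}$ is a copy of $H$ in $A$.

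The construction starts from a ``picture'' $P_0$. For each copy $H_e$ of $H$ in $A$, take a disjoint copy of $H$ and embed it transversally according to $H_e$. Then I process the edges $e_1,\dots,e_m$ of $A$ one at a time. At step $j$, let $e_j=\{a,b\}$. The bipartite graph $B_j=P_{j-1}[V_a,V_b]$ is a disjoint union of stars and edges, because the constituent copies of $H$ are vertex-disjoint. By induction on the number of colours, I apply the bipartite Ramsey theorem (Hales--Jewett style) to $B_j$ to obtain a bipartite graph $\widetilde B_j$. It has the property that every $q$-colouring of $\widetilde B_j$ contains a copy of $B_j$ in which the colour of each edge depends only on its vertex of a designated type. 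For a matching-type $B_j$ this can be arranged as a product/Hales--Jewett construction. Now $P_j$ is the amalgamation: for each copy of $B_j$ in $\widetilde B_j$, take a fresh copy of $P_{j-1}$, and glue all these copies along $\widetilde B_j$ on the parts $V_a,V_b$. Set $G=P_m$.

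The Ramsey property follows by backward induction. Given a $q$-colouring of $G=P_m$, we find a copy of $P_{m-1}$ in which edges between $V_{a_m}$ and $V_{b_m}$ are coloured ``canonically''. Repeating this, we obtain a copy of $P_0$ in which, for each pair $\{a,b\}\in E(A)$, the colour of an edge depends only on the pair $\{a,b\}$. This induces a $q$-colouring of $A$, which contains a monochromatic $H_e$, and the corresponding copy of $H$ in $P_0$ is then monochromatic.

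The main obstacle is the clique-number bound. I would prove by induction on $j$ the invariant that every clique of $P_j$ (indeed every copy of $K_k$ with $k\ge 3$, and every edge) lies inside one of the canonical copies of $H$ coming from $P_0$. The key point is that the amalgamation glues copies of $P_{j-1}$ only along the bipartite graph between two parts. Consider a clique $K$ meeting two different copies of $P_{j-1}$. Since $K$ is transversal, it contains at most one vertex from each part. Any two of its vertices lying in different copies and outside $V_a\cup V_b$ are nonadjacent, so $K$ has at most two vertices outside the common part, hmm, more precisely at most the glued vertices. A short case analysis shows that $K$ lies entirely in one copy, using that vertices of $V_a\cup V_b$ in different copies meet only through $\widetilde B_j$, and that $\widetilde B_j$ is triangle-free. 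Hence $\omega(P_j)=\omega(P_{j-1})=\dots=\omega(P_0)=\omega(H)$. This step is the delicate one: the gluing must be a \emph{free} amalgamation over a bipartite graph, which is where the partite structure is essential.
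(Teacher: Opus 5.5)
The paper gives no proof of this statement; it is quoted from Ne\v{s}et\v{r}il and R\"odl as background, so your proposal has to stand on its own. The partite construction is a legitimate route. However, the step you single out as delicate is argued incorrectly, and this is a genuine gap.

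Your case analysis does show that a clique $K$ of $P_j$ lies, \emph{as a vertex set}, inside one amalgamated copy $P^{(i)}$ of $P_{j-1}$. Indeed, a vertex outside $V_a\cup V_b$ has all its neighbours in its own copy, and a clique inside $V_a\cup V_b$ has at most two vertices. It does not show that $K$ is a clique \emph{of} $P^{(i)}$. Two vertices $u\in V_a$, $w\in V_b$ of $P^{(i)}$ that are non-adjacent there may be adjacent in $P_j$ through an edge of $\widetilde B_j$ that belongs to a different copy of $B_j$. What you need is that every copy of $B_j$ along which you glue is an \emph{induced} subgraph of $\widetilde B_j$, so that each copy of $P_{j-1}$ is induced in $P_j$. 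Triangle-freeness of $\widetilde B_j$ plays no role here.

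Without inducedness your invariant is false. The graph $\widetilde B_1=K_{n,n}$ ($n$ large) is a valid bipartite Ramsey graph for $B_1$. Let $H$ be $K_4$ minus an edge and $e_1=\{a,b\}$. Any copy of $H$ in $P_0$ whose two non-adjacent vertices lie in $V_a$ and $V_b$ turns into a $K_4$ in $P_1$, so $\omega(P_1)\geq 4>3=\omega(H)$.

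Two further points need repair.
\begin{itemize}
\item The property you state for $\widetilde B_j$ (the colour of an edge depends only on its endpoint of a designated type) is not what your backward induction uses. You need a \emph{monochromatic} copy of $B_j$; otherwise the colouring induced on $A$ is not well defined.
\item $B_j$ is a matching only at the first step. Your justification (the copies of $H$ are vertex-disjoint) concerns $P_0$ only. After earlier amalgamations, vertices on both sides are shared between copies, so $B_j$ can contain cycles. For $H=K_3$ with a Hales--Jewett gluing, processing $\{a,c\}$, $\{b,c\}$, $\{a,b\}$ already yields a $C_6$ in $B_3$. So the partite lemma must handle arbitrary bipartite graphs.
\end{itemize}

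The Hales--Jewett construction fixes all of this at once. Let $X_a,X_b$ be the sides of $B_j$ and $E=E(B_j)$. Let $\widetilde B_j$ have vertex set $X_a^n\cup X_b^n$, and join $\mathbf x$ and $\mathbf y$ whenever $\{x_i,y_i\}\in E$ for every $i$. Then:
\begin{itemize}
\item edges of $\widetilde B_j$ correspond bijectively to words in $E^n$;
\item each combinatorial line gives a copy of $B_j$ that is induced (test adjacency in a variable coordinate);
\item a monochromatic line gives a monochromatic copy.
\end{itemize}
Amalgamate only along these line copies, not along all copies of $B_j$, some of which may be non-induced. Then each copy of $P_{j-1}$ is induced in $P_j$. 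Your vertex-set argument then shows that every clique of $P_j$ is a clique of some copy of $P_{j-1}$. Both $\omega(P_m)=\omega(H)$ and $P_m\rightarrow_q H$ follow.
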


A more systematic study of graph parameters for Ramsey graphs was then initiated by Burr, Erd\H{o}s and Lov\'asz~\cite{burr1976graphs} who looked at e.g.~the minimum degree, the chromatic number and the vertex-connectivity of Ramsey graphs for the complete graph $K_t$, when the number of colours is $q=2$. For this however note that, when we want to minimize or maximize these parameters, in some cases we obtain trivial results. In fact, if $G\rightarrow_2 K_t$ holds and $G^\ast$ is obtained from $G$ by adding an isolated vertex, then $G^\ast\rightarrow_2 K_t$ is true as well. Hence, we easily find a $2$-Ramsey graph for $K_t$ of minimum degree $0$, which is clearly the smallest value possible. To avoid such trivialities, Burr, Erd\H{o}s and Lov\'asz~\cite{burr1976graphs} only studied those Ramsey graphs which are minimal with respect to subgraph containment. More formally, we say that $G$ is a \emph{minimal $q$-Ramsey graph} for $H$, if $G\rightarrow_q H$ holds and additionally $G' \not\rightarrow_q H$ is true for every proper subgraph $G'$ of $G$. 
Moreover, we let $\mathcal{M}_q(H)$ denote the set of all minimal $q$-Ramsey graphs for $H$.

Burr, Erd\H{o}s and Lov\'asz~\cite{burr1976graphs} obtained the following results: If we let
$s_2(K_t)$, $r_c(K_t)$ and $r_{\kappa}(K_t)$ denote the smallest possible
minimum degree, chromatic number and vertex-connectivity among all graphs in $\mathcal{M}_2(K_t)$,
then $s_2(K_t) = (t-1)^2$, $r_c(K_t) = r_2(K_t)$ and $r_{\kappa}(K_t) = 3$. 
The result on the minimum degree is very surprising for two reasons. First of all, it is a precise result, which rarely happens in Ramsey theory. Secondly, the value $s_2(K_t)$ is much smaller than the Ramsey number $r_2(K_t)$.
While every $2$-Ramsey graph for $K_t$ needs to have exponentially many vertices,
we can find such graphs that have a vertex of degree $(t-1)^2$ whose deletion destroys the Ramsey property.

Finally, the paper by Burr, Erd\H{o}s and Lov\'asz lead to further research on minimal Ramsey graphs.
The papers~\cite{burr1981ramsey,burr1978class,nevsetvril1978structure,rodl1995threshold} 
study the question when a graph $H$ possesses infinitely many minimal Ramsey graphs.
The following characterization can be obtained by combining their results.

\begin{thm}[Theorem follows from~\cite{burr1981ramsey,burr1978class,nevsetvril1978structure,rodl1995threshold}] \label{thm:Ramsey.finite}
The set $\mathcal{M}_2(H)$ is finite if and only if 
$H$ is the disjoint union of an odd star and a matching.
\end{thm}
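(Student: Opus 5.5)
The statement splits into two directions, and I would prove them separately. The first is finiteness when $H$ is a disjoint union of an odd star $K_{1,2k+1}$ and a matching $mK_2$. The second is that $\mathcal{M}_2(H)$ is infinite for every other $H$. A preliminary observation is that $G\rightarrow_2 H$ is monotone under taking supergraphs. Hence every graph that is $2$-Ramsey for $H$ contains a member of $\mathcal{M}_2(H)$. This lets us pass from Ramsey graphs to minimal ones.

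\textbf{Finiteness.} For $H=K_{1,2k+1}\cup mK_2$, I would show that every $G\in\mathcal{M}_2(H)$ has a bounded number of edges.

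First treat the pure star $K_{1,2k+1}$. A graph $G$ is Ramsey for this star exactly when some vertex has degree at least $4k+1$. Indeed, if every degree is at most $4k$, Petersen's $2$-factor theorem lets us split the edges into two parts of maximum degree at most $2k$; we pass to a $4k$-regular supergraph by a standard embedding and then restrict the $2$-factorisation back. This is where oddness of the star matters. Consequently the only minimal Ramsey graph for $K_{1,2k+1}$ is $K_{1,4k+1}$.

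Adding the matching, I would argue that a minimal $G$ consists of a high-degree vertex together with a bounded number of further edges. If $G$ had many edges, it would contain either a vertex of large degree or a large matching. In either case a proper subgraph would already be Ramsey. The key tool here is a greedy argument: a monochromatic star can be extended by disjoint monochromatic edges taken from a large matching, using pigeonhole on the colours. This would bound $|E(G)|$ by a function of $k$ and $m$, giving only finitely many graphs. I would take these cases from Burr, Erd\H{o}s, Faudree, Schelp and their coauthors, as cited.

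\textbf{Infiniteness.} For all other $H$, I would split into two cases.

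If $H$ contains a cycle, I would invoke the Ne\v{s}et\v{r}il--R\"odl type result (Theorem~\ref{thm:Ramsey:clique.number}, strengthened to large girth, as in~\cite{nevsetvril1978structure} and~\cite{rodl1995threshold}). It gives Ramsey graphs for $H$ in which every cycle shorter than any prescribed $\ell$ lies inside a copy of $H$. Take a minimal subgraph $G_\ell$ of such a Ramsey graph. Then $G_\ell$ is Ramsey, so it must contain a cycle of length at most $|V(H)|$; I would then choose the construction so that $G_\ell$ necessarily has large size, for instance chromatic number exceeding $\ell$. Letting $\ell\to\infty$ yields infinitely many distinct minimal graphs.

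If $H$ is a forest but not of the exceptional form, I would use the constructions of Burr--Erd\H{o}s--Faudree--Rousseau--Schelp~\cite{burr1978class,burr1981ramsey}. Here one builds, for arbitrarily large $n$, Ramsey graphs that contain no Ramsey subgraph on fewer than $n$ vertices.

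The main obstacle is the forest case. It requires exhibiting unbounded minimal Ramsey graphs, for instance for even stars or paths $P_4$. Following the cited papers, the first thing I would try is a ``signal sender'' gadget construction: gadgets forcing equal (or different) colours on two far-apart edges, chained along long paths or cycles. These gadgets produce arbitrarily large minimal Ramsey graphs, and their existence reduces to a positive answer for all non-exceptional $H$.
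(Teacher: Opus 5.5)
\textbf{Overall.} The paper gives no proof of this statement; it only quotes it as a consequence of \cite{burr1981ramsey,burr1978class,nevsetvril1978structure,rodl1995threshold}. Your case split is the natural way to combine those sources, and where you simply defer to them you are doing what the paper does. Your Petersen argument for the pure odd star is also correct. The arguments you supply yourself, however, break down at the decisive points.

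\textbf{Cycle case.} Nothing in your sketch forces the minimal subgraph $G_\ell$ to be large. That it contains a short cycle is automatic, and large chromatic number of the ambient graph is not inherited by subgraphs. The missing idea is that \emph{every} subgraph on at most $t$ vertices of the constructed Ramsey graph is not Ramsey; then every member of $\mathcal{M}_2(H)$ inside it has more than $t$ vertices. This is exactly the mechanism of Lemma~\ref{cycles}, and it transfers verbatim to colourings:
\begin{itemize}
\item Proposition~\ref{prop:rodl1995threshold} gives $F'\rightarrow_2 H$ with $m_2(G')\le m_2(H)$ for all $G'\subseteq F'$ with $v(G')\le t$.
\item By Claim~\ref{clm:no.Hcycle}, the copies of a minimal $m_2$-densest $H'\subseteq H$ in $G'$ form an acyclic hypergraph. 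Ordered suitably, each copy therefore has two edges outside all earlier copies.
\item Colouring these two edges differently leaves no monochromatic $H'$, hence no monochromatic $H$.
\end{itemize}
You must work with $H'$ rather than $H$. Your property ``every short cycle lies in a copy of $H$'' does not stop copies of $H$ from overlapping heavily; for instance, many copies of a triangle with a pendant edge share one triangle.

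\textbf{Finiteness.} The pigeonhole extension is false. Colour $K_{1,4k+1}$ with $2k+1$ red and $2k$ blue edges, and colour an arbitrarily large disjoint matching entirely blue. For $k,m\ge 1$ there is then no monochromatic $K_{1,2k+1}\cup mK_2$. You have to use minimality, through good colourings of $G-f$.
\begin{itemize}
\item Bounding $\Delta(G)$: suppose $d_G(v)\gg k+m$ and $e\ni v$. In a good colouring of $G-e$, the majority colour at $v$ has matching number $<m$ in $G-v$. Giving $e$ that colour keeps the colouring good, a contradiction.
\item Using oddness: choose $f$ not incident to some vertex of degree $\ge 4k+1$; such a vertex exists by your Petersen argument, and this is where oddness enters. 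In a good colouring of $G-f$ some colour contains $K_{1,2k+1}$, and hence has matching number $O(k+m)$.
\item The other colour: either it also contains $K_{1,2k+1}$, and then $e(G)$ is bounded; or it has maximum degree $\le 2k$. In the latter case every vertex of degree $>2k$ lies in, or next to, a bounded vertex cover of the first colour, or on $f$.
\item Conclusion: the edges at these boundedly many vertices, together with $O(k+m)$ further disjoint edges, already form a Ramsey subgraph. So a minimal $G$ is bounded.
\end{itemize}

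\textbf{Forests.} The signal-sender plan is circular: no gadget is constructed, and their existence is essentially what is to be proved. For forests containing $P_4$ there is a direct argument. Colour each edge of a rooted forest by the parity of the depth of its upper endpoint; every monochromatic component is then a star, so no forest is Ramsey for such $H$. On the other hand, graphs of girth $>\ell$ and minimum degree $\ge 4v(H)$ are Ramsey for every forest $H$. Hence their minimal Ramsey subgraphs contain a cycle and have more than $\ell$ vertices. The remaining star forests need separate constructions. For example, for $K_{1,2k}$ one can take $(4k-2)$-regular graphs of large girth and odd order. They are Ramsey, since a good colouring would make each colour class a $(2k-1)$-factor, which is impossible in odd order. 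Their small subgraphs are forests of maximum degree $\le 4k-2$, which are not Ramsey.
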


In the subsequent years a big focus was put on generalizing the results of Burr, Erd\H{o}s and Lov\'asz
to general graphs and arbitrary numbers of colours. The parameter which received most attention
is the minimal minimum degree of minimal Ramsey graphs, formally
$$
s_q(H) := \min\{\delta(G):~ G\in \mathcal{M}_q(H)\} .
$$
In general the following bounds hold for this parameter. 
A proof of this statement is given in~\cite{fox2007minimum} for $q=2$ colours,
and it can be generalized easily to more than 2 colours.

\begin{thm}[Generalization of Theorem 3 in~\cite{fox2007minimum}]\label{thm:Ramsey.degree.bounds}
For every graph $H$ and every positive integer $q$, it holds that
$$q(\delta(H)-1)+1 \leq s_q(H) \leq r_q(H) - 1 . $$
\end{thm}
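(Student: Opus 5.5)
Both inequalities rest on one observation: if $G\in\mathcal{M}_q(H)$ and $v$ is a vertex of $G$, then $G-v$ is a proper subgraph of $G$. Hence there is a $q$-colouring of $E(G-v)$ with no monochromatic $H$. Conversely, every $q$-colouring of $E(G)$ that extends it contains a monochromatic copy of $H$, and that copy must use $v$. I would prove the two bounds separately.

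\emph{Lower bound.} Let $G\in\mathcal{M}_q(H)$ and suppose, for contradiction, that some vertex $v$ has $\deg_G(v)\le q(\delta(H)-1)$. Take a colouring $c$ of $G-v$ with no monochromatic $H$. Split the edges at $v$ into $q$ classes of size at most $\delta(H)-1$, and give the $i$-th class colour $i$. In the extended colouring, $v$ has fewer than $\delta(H)$ incident edges of each colour. A monochromatic copy of $H$ in colour $i$ would have to contain $v$, since $c$ has none. But every vertex of that copy has at least $\delta(H)$ incident edges of colour $i$, which is impossible. So the extended colouring avoids a monochromatic $H$, contradicting $G\rightarrow_q H$. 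Therefore $\delta(G)\ge q(\delta(H)-1)+1$ for every $G\in\mathcal{M}_q(H)$, which gives the lower bound on $s_q(H)$. If $\delta(H)=0$ the bound reads $s_q(H)\ge 1$. This is handled separately: a minimal Ramsey graph has no isolated vertices, because deleting one preserves the Ramsey property.

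\emph{Upper bound.} The set $\mathcal{M}_q(H)$ is nonempty: $K_{r}$ with $r=r_q(H)$ is Ramsey, so it contains a minimal Ramsey subgraph. The main obstacle is to exhibit a minimal Ramsey graph with a vertex of degree at most $r-1$, since a minimal subgraph of $K_r$ could a priori have large minimum degree. I would argue via a vertex-minimal construction.

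First, $K_{r-1}\not\rightarrow_q H$, so we may fix a colouring $\chi$ of $K_{r-1}$ with no monochromatic $H$. Next, take a Ramsey graph $\Gamma$ that is large enough, for instance one with $\Gamma\rightarrow_{q'} K_r$ for $q'$ equal to the number of $q$-colourings of $K_{r-1}$. Using this many colours, one can build a graph $G_0\rightarrow_q H$ containing a vertex $v$ whose neighbourhood $N(v)$ spans a clique of size $r-1$. The construction forces every $q$-colouring of $G_0-v$ in which $G_0-v$ avoids a monochromatic $H$ to colour $N(v)$ as a copy of $\chi$ (up to the relevant identification). Then $G_0-v$ admits no $H$-free extension to $v$, because $v$ together with $N(v)$ forms a $K_r$. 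This is the standard signal-sender or BEL-type gadget construction, and it is the delicate step.

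Finally, pass to a minimal Ramsey subgraph $G\subseteq G_0$ that still contains $v$. We cannot drop $v$, since $G_0-v\not\rightarrow_q H$. Then $\deg_G(v)\le r-1$, which gives $s_q(H)\le r_q(H)-1$.

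For the generalization from $q=2$, I would simply repeat the argument of~\cite{fox2007minimum} verbatim, with $q$ colour classes in place of two. Each step above is colour-count agnostic; the only change is that the parameter $q'$ in the auxiliary Ramsey graph grows with $q$.
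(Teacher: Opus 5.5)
Your lower bound is correct and is the standard argument: colour $G-v$ without a monochromatic $H$, then split the at most $q(\delta(H)-1)$ edges at $v$ so that no colour appears $\delta(H)$ times there. There is a minor slip in the edge case. For $\delta(H)=0$ the bound reads $1-q\le s_q(H)$, which is vacuous, not $s_q(H)\ge 1$. Also, your claim that minimal Ramsey graphs have no isolated vertices fails when $H$ has isolated vertices; for example, $K_2+K_1$ is minimal $q$-Ramsey for itself. Nothing needs proving in that case, so this is harmless.

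The upper bound, however, has a genuine gap, and it rests on a false premise. You say a minimal Ramsey subgraph of $K_r$ ``could a priori have large minimum degree'', but every subgraph of $K_r$ has maximum degree at most $r-1$. So the proof is one line. With $r=r_q(H)$ we have $K_r\rightarrow_q H$, so the finite graph $K_r$ contains some $G\in\mathcal{M}_q(H)$ (an inclusion-minimal Ramsey subgraph). Then $s_q(H)\le\delta(G)\le\Delta(G)\le r-1$. The paper gives no proof of its own; it cites the $q=2$ case and says the extension to $q$ colours is easy, and this trivial upper bound is consistent with that.

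In place of this, you appeal to a ``standard signal-sender or BEL-type gadget'' that forces the colouring of $N(v)$. The existence of such gadgets is only established for special classes of $H$ (e.g.\ cliques, 3-connected graphs), not for an arbitrary graph $H$. You also give no construction, so as written this step is unsupported. It is also superfluous. Your final paragraph only uses that $v\cup N(v)$ spans a $K_r$ and that $G_0-v\not\rightarrow_q H$. The graph $G_0=K_r$ already has both properties, because $K_r-v=K_{r-1}\not\rightarrow_q H$.
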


Szab\'o, Zumstein and Zürcher~\cite{szabo2010minimum} proved that for $q=2$ the lower bound 
in Theorem~\ref{thm:Ramsey.degree.bounds}
is tight for various bipartite graphs, including all trees, even cycles and complete bipartite graphs,
and they conjectured that it is tight for all bipartite graphs $H$.
Boyadzhiyska, Clemens and Gupta~\cite{boyadzhiyska2022minimal} showed that the lower bound is also
tight for arbitrary cycles and arbitrary $q$.
Moreover, Boyadzhiyska et al~\cite{boyadzhiyska2025ramsey} showed that for an Erd\H{o}s-R\'enyi random graph $G\sim G_{n,p}$ with high probability the lower bound in Theorem~\ref{thm:Ramsey.degree.bounds} is tight 
for all values of $q$ if $\frac{\log n}{n} \ll p \ll n^{-2/3}$,
and for some but not all values of $q$ if $n^{-2/3} \ll p \ll n^{-1/2}$. 
For complete graphs $H=K_t$ and $q\geq 3$ colours, different bounds on
$s_q(K_t)$ are proven in~\cite{attwa2025improved,bamberg2022minimum,fox2016minimum,guo2020packing,han2018vertex}. 
Further results on $s_q(H)$ can be found in e.g.~\cite{boyadzhiyska2022thesis,fox2014ramsey,grinshpun2017minimum,grinshpun2015thesis,gupta2023thesis}.

\subsection{Game versions of Ramsey results}
From now on, we consider the $(1:b)$ \emph{Maker-Breaker $H$-game} on a graph $G$.
In this game, the board $X$ is the edge set of a given graph $G$,
and Maker wins if she claims all edges of a copy of the given graph $H$.
Beck~\cite{beck2002positional,beck2008combinatorial}, Gebauer~\cite{gebauer2013size}, as well as Ne{\v{s}}et{\v{r}}il and Valla~\cite{nevsetvril2010ramsey} already considered this game
as a natural game variant of the just described Ramsey problem of asking whether $G\rightarrow_q H$ holds.

Following the Ramsey notation above, 
let us write $G \MBarrow_b H$ if Maker wins the $(1:b)$ biased $H$-game on $G$ as first player,
and in this case say that $G$ is an \emph{MB-winnable} graph for $H$ with bias $b$.
Then one of the central and impressive results in~\cite{beck2008combinatorial,beck2019two} 
states that $K_n \MBarrow_1 K_k$ holds if and only if
$k \leq \lfloor 2 \log_2 n - 2 \log_2 \log_2 n+2\log_2 e - 3+o(1) \rfloor$.
Moreover, the main result by Bednarska and {\L}uczak~\cite{bednarska2000biased} states that
for arbitrary graphs $H$ with at least 2 edges, we have $K_n \MBarrow_b H$ if $b\leq c_1 n^{1/m_2(H)}$, and
$K_n \MBnotarrow_b H$ if $b\geq c_2 n^{1/m_2(H)}$, where $m_2(H)=\max\left\{\frac{e(F)-1}{v(F)-2}:~F\subseteq H,~v(F)\geq 3 \right\}$ is the maximum 2-density of $H$,
and $c_1,c_2$ are constants depending on $H$ only.
For further results on $H$-games see also~\cite{balogh2011chvatal,gebauer2012clique,glazik2022new,muller2014threshold,nenadov2016threshold,sowa2025constructive,sowa2026constructive}.
Moreover, note that by a strategy stealing argument, the following connection between Ramsey problems and Maker-Breaker games can be proven easily; see Section~\ref{sec:concluding}.

\begin{prop}\label{prop:stealing}
Let $G,H$ be  graphs. Then $G\rightarrow_2 H$ implies $G\MBarrow_1 H$.
\end{prop}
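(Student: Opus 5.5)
The plan is the classical strategy-stealing argument, combined with the fact that a Ramsey graph admits no draw. First I argue that, in the $(1:1)$ game on $G$, if $G\rightarrow_2 H$ then the second player cannot avoid losing in the symmetric ``strong'' sense. Concretely, I consider the auxiliary game in which Maker plays first and Breaker second on the board $E(G)$, and I suppose for contradiction that Breaker has a winning strategy $S$. That is, whatever Maker does, at the end of the game Maker's graph contains no copy of $H$.

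Next I steal this strategy. I build a Maker strategy in a second play, again with Maker moving first, in which Maker pretends to be the second player. Maker first claims an arbitrary edge $e_0$ and then ignores it. Afterwards she regards Breaker as the ``first player'' and answers Breaker's moves according to $S$, with the roles of the two players swapped. If $S$ asks her to claim the edge she has already secretly claimed, she instead claims another arbitrary free edge and treats that one as her new extra edge. If no free edge remains, the game is over anyway. The standard bookkeeping shows that Maker always holds every edge prescribed by $S$ plus possibly one extra edge. Since extra edges only help in a monotone game, at the end Maker's graph contains the edge set that a follower of $S$ would own.

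The key point is what $S$ guarantees for its follower. $S$ is Breaker's strategy as second player, and it guarantees that the first player, here the real Breaker, ends with an $H$-free graph. At the end of the game all edges of $G$ are claimed, so the two players' edge sets form a $2$-colouring of $E(G)$. Because $G\rightarrow_2 H$, one colour class contains a monochromatic copy of $H$. The real Breaker's class is $H$-free by the guarantee of $S$, so Maker's class contains a copy of $H$. That is a Maker win.

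This conclusion needs no contradiction at all; it is the direct argument. I avoid the hypothetical $S$ entirely as follows. The Maker-Breaker $H$-game is a finite game with perfect information and no draws, so by Zermelo's theorem exactly one player has a winning strategy. If Breaker had one, the stealing argument above would give the ``first player'', namely the real Breaker, an $H$-free final graph while Maker also failed to build $H$. This would produce a $2$-colouring of $E(G)$ with no monochromatic $H$, contradicting $G\rightarrow_2 H$. Hence Maker wins, i.e.\ $G\MBarrow_1 H$.

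The main obstacle is to state precisely which ``symmetric'' property of $S$ is being stolen. A Breaker winning strategy only says that Maker fails to build $H$. It says nothing directly about Breaker's own graph. The cleanest fix is to apply the stealing argument to a Breaker strategy $S'$ in the role-reversed game, where the second player must prevent the first from completing $H$. I then check carefully that Maker, following the stolen $S'$, keeps the real Breaker $H$-free. Once that holds, the Ramsey property forces Maker's colour class to contain $H$, and the extra edge is harmless by monotonicity.
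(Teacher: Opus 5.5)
Your argument is correct and is essentially the paper's strategy-stealing proof. The only difference is bookkeeping: the paper first uses $G\rightarrow_2 H$ to turn a Breaker winning strategy into one guaranteeing that Breaker himself ends with a copy of $H$, and lets Maker steal that; you let Maker steal the blocking guarantee, so that the real Breaker stays $H$-free, and then apply the Ramsey property to the final colouring. Your last two paragraphs are redundant rather than wrong. The ``role-reversed'' strategy $S'$ is just $S$ itself, and the Zermelo step still uses the hypothetical $S$, so you can simply note that a Breaker winning strategy would produce a Maker winning strategy, which is impossible.
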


Now, motivated by the research of Burr, Erd\H{o}s and Lov\'asz, and of subsequent papers,
we aim to analyze graphs $G$ which are minimal for winning an $H$-game.
Formally, we say that $G$ is \emph{minimal MB-winnable} for $H$ with bias $b$,
if $G \MBarrow_b H$ and  $G' \MBnotarrow_b H$ for every proper subgraph $G'$ of $G$.
We let $\mathcal{M}_{MB}^b(H)$ denote the set of all such graphs $G$,
and we define a minimal minimum degree by
$$
s_{MB}^b(H) := \min\{\delta(G):~ G\in \mathcal{M}_{MB}^b(H)\} .
$$
Also, we may be interested in analyzing games in which Breaker is the first player. 
For this, we replace MB by BM in all the definitions above.

\medskip

Our first result then is a game analogue of Theorem~\ref{thm:Ramsey.finite}.
Interestingly, we can observe that the question whether infinitely many winnable graphs exist,
does not depend on Maker being first or second player. Additionally, we see that
the characterization in (III) is different to the characterization in the Ramsey setting, see Theorem~\ref{thm:Ramsey.finite}, and neither of these characterizations describes a
subset of the other, although Proposition~\ref{prop:stealing} holds.

\begin{thm}{\label{finiteVsInfinite}}
    For a given graph $H$, the following three statements are equivalent.
    \begin{itemize}
        \item[(I)]{$\mathcal{M}^1_{MB}(H)$ consists of infinitely many graphs.}
        \item[(II)]{$\mathcal{M}^1_{BM}(H)$ consists of infinitely many graphs.}
        \item[(III)]{At least one connected component of $H$ contains at least three edges.}
    \end{itemize}
\end{thm}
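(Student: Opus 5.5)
We prove (III)$\Rightarrow$(I), (III)$\Rightarrow$(II), and then $\neg$(III)$\Rightarrow$$\neg$(I) and $\neg$(III)$\Rightarrow$$\neg$(II).

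\emph{Finiteness when every component of $H$ has at most two edges.} Then $H$ is a disjoint union of $a$ copies of $P_3$ (two-edge paths), $m$ single edges and possibly isolated vertices. Isolated vertices only add a trivial vertex-count condition, so they can be ignored. The key observation is that such an $H$-game is essentially decided by counting. Maker needs $a$ vertex-disjoint cherries and $m$ further disjoint edges. I would show that a minimal winnable $G$ has a bounded number of edges, say at most some $f(a,m)$.

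One route is to note that if $G$ contains a large matching or a vertex of large degree, Maker wins on a bounded subgraph of it. If instead $G$ has no large matching and no vertex of large degree, then $G$ has boundedly many edges, since a graph with matching number $\nu$ and maximum degree $\Delta$ has at most $O(\nu\Delta)$ edges. The subgraphs to look for are a matching of size about $2(2a+m)$, or a large star forest. On these Maker wins by an explicit greedy strategy, because cherries can be built inside stars of degree $\ge 3$ or inside paths. Minimality then forces $G$ to be one of these bounded subgraphs, so only finitely many graphs occur. The same argument works when Breaker moves first, with slightly larger constants.

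\emph{Infinitely many graphs when some component $C$ of $H$ has at least three edges.} The plan is to construct an infinite family of winnable graphs $G_n$ with arbitrarily large girth, or more generally with a structure that forces every minimal winnable subgraph $G_n'\subseteq G_n$ to be large. If the minimal subgraphs obtained from the $G_n$ had bounded size, finitely many graphs would cover them all. To rule this out, the components of size at least $3$ are the obstacle for Breaker. A connected graph $C$ with three or more edges contains a $P_4$ or a $K_{1,3}$, and I expect Breaker to win the $C$-game on any graph containing only few copies of $C$ or having a sparse local structure. Breaker does this via a pairing strategy: pair edges so that every copy of $C$ contains a pair, which is possible locally when each copy of $C$ has at least $3$ edges and the graph is locally tree-like.

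On the Maker side I would use Proposition~\ref{prop:stealing} together with Theorem~\ref{thm:Ramsey:clique.number} or its high-girth variants. Ramsey graphs for $H$ exist with arbitrarily large girth whenever $H$ is a forest, and more generally sparse Ramsey graphs exist by Ne\v{s}et\v{r}il--R\"odl. Such graphs are MB-winnable, and also BM-winnable after taking a disjoint union of two copies, or via monotonicity arguments. I would then argue that any subgraph on fewer than $n$ vertices is Breaker-winnable. The difficulty is to show that bounded pieces of a sparse graph are pairing-breakable. I expect this step to be the main obstacle: finding, for an arbitrary $H$ with a component of at least three edges, a uniform Breaker strategy on all bounded subgraphs of the $G_n$. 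If that fails, I would instead build the $G_n$ explicitly, for example long cycles or chains of gadgets where Maker's win requires the whole chain, treating the cases where $C$ contains $P_4$ and where it contains $K_{1,3}$ separately.
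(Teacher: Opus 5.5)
Both directions of your proposal have genuine gaps.

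\emph{Finite direction.} The dichotomy you rely on is false as stated. A matching contains no $P_3$, so a matching of size $2(2a+m)$ gives Maker no cherry at all when $a\geq 1$. A single vertex of large degree only ever yields one star, i.e.\ one component of $H$; for $H=2P_3$, no subgraph of $K_{1,N}$ is winnable. So neither large $\nu(G)$ nor large $\Delta(G)$ produces a bounded winnable subgraph by itself, and the reduction to bounded $\nu$ and $\Delta$ is unjustified. The missing idea is that minimality must be exploited through the winnability of $G$ itself. The paper bounds the degree as follows. If $G$ is winnable for $H$, then $G-v$ is winnable for $H'$, obtained from $H$ by deleting one component. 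So if $d_G(v)$ is large and $e$ is an edge at $v$, Maker wins on $G-e$: she plays that $H'$-strategy on $G-v$ and a pairing strategy on the remaining edges at $v$. This contradicts minimality. The correct bounded winning substructure is a family of $t$ vertex-disjoint copies of $P_4$, $K_3$ or $K_{1,3}$: pair them up, and whenever Breaker touches one, Maker builds a cherry in its partner. Bounded degree plus the absence of such a family bounds the number of $P_4$'s, and hence the size of every non-star component. The leftover $K_2$/$P_3$ components must then be bounded separately via minimality, since Maker never gets a cherry inside them. The paper proves finiteness only for $\mathcal{M}^1_{BM}(H)$ and transfers it to $\mathcal{M}^1_{MB}(H)$ via Lemma~\ref{lem:MBBM-Lemma}. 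That lemma also gives (I)$\Rightarrow$(II) for free.

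\emph{Infinite direction.} Here you do not yet have an argument, and the route you favour cannot work for forests. Local tree-likeness gives Breaker no pairing against a tree $C$: at a vertex of degree $d\geq 5$, the $\binom{d}{3}$ copies of $K_{1,3}$ cannot all contain a pair. Moreover, Maker wins the $T$-game on a bounded perfect $2e(T)$-ary tree (Theorem~\ref{thm:some.precise.degrees.biased}(c)); for instance, the tree $K_{1,2r-1}$ is already a Ramsey graph for $S_r$ of infinite girth. So ``every small subgraph is Breaker-winnable'' cannot come from sparseness. The paper instead builds explicit chains: a long path with attached stars for $S_r$, $S_{r,s}$ and $S_{r,s,t}$. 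There, Maker's win requires propagating forcing moves along the whole path to a final double threat, and deleting any path edge lets Breaker pair. It then inducts on leaf-stripping, attaching $2m$ pendant edges to every vertex of a minimal $T'$-winnable graph. Forests are handled by a disjoint union of such gadgets that forces every minimal subgraph to contain a whole gadget for a suitable component $T_s$. For $H$ containing a cycle, Theorem~\ref{thm:Ramsey:clique.number} is the wrong sparse-Ramsey input. You need Ramsey graphs all of whose subgraphs on at most $t$ vertices satisfy $m_2\le m_2(H)$ (R\"odl--Ruci\'nski). You also need the key lemma that, in such a graph, the hypergraph of copies of a minimal $H'\subseteq H$ attaining $m_2(H)$ is acyclic. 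This orders the copies so that each has two fresh edges, which gives Breaker a pairing strategy. That is precisely the step you label ``the main obstacle'', so this half is missing its core.
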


We then turn to the study of graph parameters for graphs in $\mathcal{M}_{MB}^b(H)$
and $\mathcal{M}_{BM}^b(H)$, and observe that we can prove
a game analogue of Theorem~\ref{thm:Ramsey:clique.number},
and we can also prove an analogous statement for chromatic numbers.

\begin{thm} \label{thm:construction.chromatic.clique}
For every graph $H$ and every bias $b\in\mathbb{N}$,
there exists a graph $G\in \mathcal{M}_{MB}^b(H)$ such that
$\omega(G)=\omega(H)$ and $\chi(G)=\chi(H)$.
\end{thm}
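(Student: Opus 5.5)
We have to build a minimal MB-winnable graph for $H$ with bias $b$ whose clique number and chromatic number are those of $H$. The plan is to construct one MB-winnable graph $G_0$ with $\omega(G_0)=\omega(H)$ and $\chi(G_0)=\chi(H)$, and then take a minimal subgraph. First, winnability is monotone under adding edges: Maker can play on the larger board by ignoring the extra edges, and whenever Breaker claims an extra edge she pretends he claimed some arbitrary free edge of the subgraph. So among all subgraphs of $G_0$ that are still MB-winnable, one that is minimal under inclusion lies in $\mathcal{M}_{MB}^b(H)$. Such a subgraph $G$ contains a copy of $H$, because Maker wins on it, so $\omega(G)\ge\omega(H)$ and $\chi(G)\ge\chi(H)$. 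Being a subgraph of $G_0$, it also satisfies $\omega(G)\le\omega(G_0)$ and $\chi(G)\le\chi(G_0)$. Hence everything reduces to finding $G_0$ with $G_0\MBarrow_b H$, $\omega(G_0)=\omega(H)$ and $\chi(G_0)=\chi(H)$.

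For $G_0$ I would use a Nešetřil--Rödl type hypergraph/amalgamation construction. Set $k=\chi(H)$ and fix a proper colouring $V(H)=V_1\cup\dots\cup V_k$. Let $G_0$ be a $k$-partite graph with parts $W_1,\dots,W_k$, built as a union of copies of $H$ in which each copy places $V_i$ inside $W_i$. This gives $\chi(G_0)\le k$ for free.

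Clique number is the delicate part, since arbitrary unions of copies can create new cliques. The standard fix is to let distinct copies of $H$ share at most one edge and to require the auxiliary hypergraph of copies to have large girth. Then every clique of $G_0$, which has bounded size, lies inside a single copy, so $\omega(G_0)=\omega(H)$. This is exactly the mechanism behind Theorem~\ref{thm:Ramsey:clique.number} in~\cite{nevsetvril1976ramsey}.

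It remains to make Maker win, and this is the main obstacle. One route would be to invoke Theorem~\ref{thm:Ramsey:clique.number} with $q=b+1$ colours, obtain a graph $G_1$ with $G_1\rightarrow_{b+1}H$ and $\omega(G_1)=\omega(H)$, and then use the biased analogue of Proposition~\ref{prop:stealing} to get $G_1\MBarrow_b H$. I am unsure whether strategy stealing works for bias $b>1$, and Theorem~\ref{thm:Ramsey:clique.number} does not control the chromatic number anyway.

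I would therefore argue directly, through a game-version Nešetřil--Rödl induction or an amalgamation tailored to the game. Let $m=e(H)$. I would construct a partite system of copies of $H$ whose hypergraph of edge sets (each hyperedge being the $m$ edges of one copy) contains a structure on which Maker wins a $(1:b)$ hypergraph game. A candidate is a sufficiently high, large-girth "tree of copies", in which every edge lies in many copies arranged so that Maker can greedily extend partial copies faster than Breaker can block them. This is analogous to how Maker wins on large high-girth uniform hypergraphs with huge degrees.

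More concretely, I would try the following induction on the edges $e_1,\dots,e_m$ of $H$. Build a structure in which, after Maker claims one copy of $e_1$, there are more than $b\cdot(\text{number of remaining rounds})$ disjoint options for completing the rest, recursively, while keeping the partite structure and the large girth. Making sure the large-girth amalgamation stays compatible with this recursive "many disjoint extensions" requirement is the step I expect to need the most care.
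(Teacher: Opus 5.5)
Your reduction is correct and is exactly how the paper finishes: find one graph $G_0$ with $G_0\MBarrow_b H$, $\omega(G_0)=\omega(H)$ and $\chi(G_0)=\chi(H)$, then pass to an inclusion-minimal winnable subgraph. The gap is that you never produce such a $G_0$. You leave the claim that Maker wins as the ``main obstacle'', and the mechanism you sketch for it works against Maker.

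Suppose the copies of $H$ pairwise share at most one edge and form an acyclic system, which is what a ``tree of copies'' is. Then, as in Claim~\ref{clm:no.Hcycle} and the end of the proof of Lemma~\ref{cycles}, one can order the copies so that each has at least two edges outside all earlier copies. Breaker then blocks every copy with a pairing strategy once $e(H)\ge 3$. Likewise, many pairwise disjoint ways of completing a copy through $e_1$ do not help Maker: Breaker simply answers inside whichever option Maker has just started. Maker's wins come from double threats, which need heavily overlapping copies. That is incompatible with the linearity and large-girth conditions you impose to control cliques.

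The missing observation is that no girth condition is needed at all. Take $G_0$ to be the $N$-blow-up of $H$: replace each $v_i$ by an independent set $V_i$ of size $N$, and join $V_i$ completely to $V_j$ whenever $v_iv_j\in E(H)$. A clique of $G_0$ meets each $V_i$ in at most one vertex, so it projects injectively onto a clique of $H$; hence $\omega(G_0)=\omega(H)$. Colouring $V_i$ with the colour of $v_i$ gives $\chi(G_0)=\chi(H)$.

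Maker wins on $G_0$ by Beck's criterion, Theorem~\ref{thm:Weak.Win}. Apply it to the hypergraph of ``canonical'' copies of $H$, those sending $v_i$ into $V_i$. Here $|X|=N^2e(H)$, $|\mathcal{F}|=N^{v(H)}$, $\Delta_2(\mathcal{F})\le N^{v(H)-3}$ and $|F|=e(H)$ for every $F\in\mathcal{F}$. So the criterion holds once $N> b^2(1+b)^{e(H)-3}e(H)$. This also avoids strategy stealing entirely, so your justified worry that it fails for $b\ge 2$ does not arise.
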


In fact, the case $b=1$ in the above theorem already follows from Theorem 1.4 in~\cite{nevsetvril2010ramsey},
and, if we would consider the clique number only, it could be proved by using 
Theorem~\ref{thm:Ramsey:clique.number} and Proposition~\ref{prop:stealing}.
Moreover, we note that the result above turns out to be useful for our discussion on minimum degrees,
as given by the following theorem.

\begin{thm}\label{thm:general.bounds}
Let $H$ be a graph and $b\in\mathbb{N}$. Then the following holds:
\begin{enumerate}
\item[(i)] $2\delta(H) - 1 \leq s_{BM}^1(H) \leq s_{MB}^1(H) \leq 2\Delta (H) - 1$,
\item[(ii)] $(b+1)(\delta(H) - 1) + 1 \leq s^b_{BM}(H) \leq s^b_{MB}(H) \leq  b\Delta(H) (\ln(\Delta(H)) + 2)$.
\end{enumerate}
\end{thm}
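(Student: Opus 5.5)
The lower bounds and the upper bounds call for different techniques. I will prove the lower bounds by a local Breaker strategy at a low-degree vertex. I will prove the upper bounds by building a minimal winnable graph that has one designated low-degree vertex. The middle inequality $s^b_{BM}(H)\le s^b_{MB}(H)$ is where I am least confident, so I try to get it from the construction itself rather than abstractly.

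\emph{Lower bounds.} Let $G\in\mathcal{M}^b_{BM}(H)$, let $v$ be a vertex of degree $d$, and suppose for contradiction that $d\le (b+1)(\delta(H)-1)$. Set $G'=G-v$. Since $G'$ is a proper subgraph, Breaker wins the $(1:b)$ $H$-game on $G'$ as first player; fix such a strategy $\mathcal{S}$. On $G$, Breaker plays as follows.
\begin{itemize}
\item He makes his first move according to $\mathcal{S}$.
\item Whenever Maker claims an edge of $G'$, he answers according to $\mathcal{S}$.
\item Whenever Maker claims an edge at $v$, he claims $b$ further edges at $v$ (any edges if fewer remain), and $\mathcal{S}$ treats this round as a pass.
\end{itemize}
Extra edges never hurt Breaker, and a pass by Maker in $\mathcal{S}$ can be simulated by an arbitrary move, so $\mathcal{S}$ still prevents any copy of $H$ inside $G'$. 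At $v$, Maker obtains at most $\lceil d/(b+1)\rceil\le \delta(H)-1$ edges. Hence no Maker copy of $H$ can use $v$, and no copy lies in $G'$, a contradiction. This gives $(b+1)(\delta(H)-1)+1$. For $b=1$ this is exactly $2\delta(H)-1$, which proves both lower bounds.

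\emph{Upper bounds.} Let $\Delta=\Delta(H)$, let $u\in V(H)$ have degree $\Delta$, and set $H^-=H-u$.
\begin{itemize}
\item First, by the machinery behind Theorem~\ref{thm:construction.chromatic.clique} (an amalgamation in the spirit of Ne\v{s}et\v{r}il and R\"odl), I build a graph $F$ with a set $S$ of $d$ vertices such that Breaker wins the $H$-game on $F$ alone, but for every $\Delta$-set $T\subseteq S$ Maker can force a copy of $H^-$ whose $N_H(u)$-vertices land exactly on $T$.
\item Next, I add a new vertex $v$ joined to all of $S$, choosing $d=2\Delta-1$ for $b=1$, and $d=b\Delta(\ln\Delta+2)$ for general $b$.
\item Maker's plan is to secure some $\Delta$ edges $vT$ and then run the $F$-strategy for that $T$. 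For $b=1$, Maker answers inside the star $K_{1,d}$ via a pairing argument, which guarantees her $\Delta$ star edges. For general $b$ she must instead fight a Box-game-type battle, and the Chv\'atal--Erd\H{o}s/Box game bound is what produces the $\ln\Delta$ factor.
\item Now pass to a minimal winnable subgraph $G^\ast$. Because Breaker wins on $F$, the subgraph $G^\ast$ must use $v$, and therefore $0<\deg_{G^\ast}(v)\le d$.
\end{itemize}
Since $F$ is built so that the Maker strategy works whether or not she moves first (she can afford one wasted tempo), the same vertex $v$ witnesses the bound in both the $MB$ and the $BM$ setting.

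\emph{Main obstacle.} The hardest step is the interleaving. Maker has to simultaneously collect $\Delta$ star edges at $v$ and run a strategy inside $F$ whose target set $T$ is only determined as the game goes on. So $F$ must be robust against Breaker pre-emptively spoiling every $T$.

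I would first try a product or amalgamation construction in which the gadgets for distinct $T$ share as little as possible. Then, whatever Breaker claims at $v$, some $T$ among Maker's star edges still has its gadget untouched, and Breaker wins on $F$ because each gadget alone misses an edge of $H$.

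For the middle inequality, if the construction does not yield it directly, I would take two disjoint copies of a graph in $\mathcal{M}^b_{MB}(H)$ and note that their union is $BM$-winnable: Maker plays as first player in an untouched copy. I would then argue that a minimal $BM$-winnable subgraph keeps a copy of the low-degree vertex.
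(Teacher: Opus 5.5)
Your lower-bound argument is the paper's: Breaker follows his first-player strategy $\mathcal{S}$ on $G-v$ and answers every Maker edge at $v$ with $b$ edges at $v$. That part is fine.

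\textbf{Upper bound.} Here there is a genuine gap. The graph $F$ carries the entire difficulty, and you never construct it. You need $F$ to be a Breaker win for $H$. At the same time it must be rich enough that Maker can force a rooted copy of $H-u$ onto \emph{every} $\Delta$-subset $T$ of a set of only $2\Delta-1$ vertices, even after Breaker has used the star phase to make free moves inside $F$. The machinery of Theorem~\ref{thm:construction.chromatic.clique} does not give this. The only Breaker-win certificate it provides is $\chi(F)<\chi(H)$, which makes $F$ $H$-free. Deleting a single vertex $u$ usually does not lower the chromatic number (e.g.\ $H=K_{2,3}$ or any even cycle). So a weak-win or blow-up graph built for $H-u$ will typically contain $H$ and be a Maker win.

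A sanity check shows your framework is too strong to come for free. As first player, Maker can greedily take $\Delta$ edges at $v$ as soon as $d\ge (b+1)(\Delta-1)+1$. So an $F$ with the robustness you require would give $s^b_{MB}(H)\le (b+1)(\Delta(H)-1)+1$. That matches the lower bound for regular $H$ and would settle $s^b_{MB}(K_t)$, which the paper leaves open. Correspondingly, nothing in a single-star setup actually produces your $\ln\Delta$ factor.

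\textbf{What the paper does instead.}
\begin{enumerate}
\item It deletes a whole colour class, $H'=H-c^{-1}(1)$, so that $\chi(H')=\chi(H)-1$. It then takes $G_0\in\mathcal{M}^b_{MB}(\widetilde H)$ for a $K$-blow-up $\widetilde H$ of $H'$, with $\chi(G_0)=\chi(\widetilde H)$, so $G_0$ is trivially $H$-free.
\item For every $S\subseteq V(G_0)$ of size $s(\Delta_c(H)-1)+1$, it attaches a set $W_S$ completely joined to $S$. Each $W_S$ is so large that untouched vertices $w\in W_S$ always remain.
\item This removes the interleaving problem altogether. Maker first wins $\widetilde H$ inside $G_0$, and only afterwards works at the vertices $w$.
\item At a fresh $w$ she claims $wv$, then one edge into each of $\Delta_c(H)-1$ prescribed sets $A_j$ of size $s$. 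She uses pairing when $s=2$, and Theorem~\ref{thm:Box.game} when $s=\lceil b(\ln\Delta_c(H)+1)\rceil$; this is where the logarithm comes from.
\item Breaker decides which vertex of each $A_j$ she gets. Lemma~\ref{lem:embedding.ktd}, built on Theorem~\ref{thm:erdos.extremal}, then finds a transversal copy of $H'$ all of whose $\Delta_c(H)$-subsets have $m$ common Maker neighbours, which completes $H$.
\item Any minimal winnable subgraph must use some $w$, and every $w$ has degree at most $s(\Delta_c(H)-1)+1$.
\end{enumerate}

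\textbf{Middle inequality.} Your remark that ``the same $v$ works in both settings'' only bounds both parameters by $d$; it does not prove $s^b_{BM}(H)\le s^b_{MB}(H)$. Your fallback is the paper's argument, with two corrections. First, it needs $b+1$ disjoint copies $F_1,\dots,F_{b+1}$ of some $F\in\mathcal{M}^b_{MB}(H)$ with $\delta(F)=s^b_{MB}(H)$, since for $b\ge 2$ Breaker's first move can touch both of two copies. Second, you must show that Breaker wins on $\bigcup_i(F_i-v_i)$. He does so by always answering in the component Maker just played in, so any minimal BM-winnable subgraph must contain some $v_i$.
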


Note that the lower bounds in Theorem~\ref{thm:general.bounds} mirror the lower bound
of Theorem~\ref{thm:Ramsey.degree.bounds}, when we set $q=b+1$, i.e.~when the total number of Maker's edges at the end of the game is equal to the average size of a colour class in an edge-colouring with $q$ colours.
Moreover, for most graphs $H$, the upper bound in Theorem~\ref{thm:general.bounds} is much smaller
than the upper bound in Theorem~\ref{thm:Ramsey.degree.bounds}. 
If $q=2$, then for all regular graphs $H$ we obtain precise results, e.g.~$s_{MB}^1(K_t)=2t-3$ and $s_{MB}^1(C_t)=3$.

Next to this, we find the following precise results.

\begin{thm}\label{thm:some.precise.degrees}
The following holds:
\begin{enumerate}
\item[(a)] For complete bipartite graphs, we have $s_{MB}^1(K_{s,t}) = s_{BM}^1(K_{s,t}) = 2\min\{s,t\} - 1 .$
\item[(b)] For the wheel $W_t$ with $t$ spokes, we have $s_{MB}^1(W_t) = s_{BM}^1(W_t) = 5 .$
\end{enumerate}
\end{thm}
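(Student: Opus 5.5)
The lower bounds come for free from Theorem~\ref{thm:general.bounds}(i). For $K_{s,t}$ with $s\le t$ we have $\delta(K_{s,t})=s$, and the wheel satisfies $\delta(W_t)=3$, so $s_{BM}^1(K_{s,t})\ge 2s-1$ and $s_{BM}^1(W_t)\ge 5$. Combined with $s_{BM}^1\le s_{MB}^1$, which is also part of Theorem~\ref{thm:general.bounds}(i), it remains only to construct, for $b=1$, a graph $G\in\mathcal{M}_{MB}^1(H)$ with $\delta(G)=2\delta(H)-1$. Here $2\Delta(H)-1$ is too weak, since $\Delta(K_{s,t})=t$ and $\Delta(W_t)=t$.

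I would follow the template I expect the upper bound in Theorem~\ref{thm:general.bounds}(i) uses, but pick a vertex of \emph{minimum} degree to be "the" low-degree vertex. Let $v\in V(H)$ with $d_H(v)=\delta(H)=:d$. For $K_{s,t}$ take $v$ on the side of size $t$, so its neighbours are the $s$ vertices of the small side. For $W_t$ take $v$ on the rim, with neighbours the hub and two rim vertices. Set $H^-=H-v$ and $N=N_H(v)$.

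The construction goes as follows.
\begin{enumerate}
\item[(1)] Build a gadget graph $F$ containing a vertex $u$ of degree $2d-1$ with the following property: in the MB game on $F$, Maker can force, within $u$'s neighbourhood, a copy of $H^-$ whose vertices corresponding to $N$ are joined to $u$ by Maker edges. Breaker should need $d$ of the $2d-1$ edges at $u$ to block, so Maker, playing at $u$ with pairing-type replies, gets $d$ of them.
\item[(2)] Make all other vertices have large degree by embedding $F$ inside a huge graph. I would use Theorem~\ref{thm:construction.chromatic.clique} or amplification via many disjoint copies of $H^-$ hanging on the $2d-1$ neighbours of $u$. The aim is that whichever $d$-subset $S$ of $N_G(u)$ Maker obtains, there is a structure over $S$ on which Maker can still complete $H^-$ with the correct attachments.
\item[(3)] Pass to a minimal winnable subgraph $G'\subseteq G$. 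The hardest part is to show that $u$ survives in $G'$ with all $2d-1$ edges. The argument is that every edge at $u$ is needed: if any edge at $u$ is deleted, Breaker wins, because Breaker can pair the remaining $2d-2$ edges at $u$ and hold Maker to $d-1$ of them there. Meanwhile, every copy of $H$ away from $u$ must be avoided by construction. Then $\delta(G')\le 2d-1$, and the lower bound forces equality.
\end{enumerate}

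I expect the main obstacle to be step (3), and in particular keeping the rest of $G$ \emph{Breaker-winnable without $u$} while still letting Maker win once $u$ is present. For $K_{s,t}$ I would try a concrete construction. Take many disjoint copies of a graph on which Maker can force $K_{s,t-1}$ robustly, such as a large complete bipartite graph. Breaker can then win locally away from $u$ only if the hosts are limited, so instead I would make the "rest" a bipartite graph that is Maker-winnable for $K_{s,t-1}$ in every sufficiently large $s$-subset of a designated side. I would then connect $u$ to $2s-1$ vertices of that side, and check that without $u$ there are no $K_{s,t}$ at all. For $W_t$ I would glue $u$ to a hub vertex and two designated rim-type vertices, using the same pairing argument for Breaker and choosing host graphs that contain no $W_t$ avoiding $u$. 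The uncertainty is whether Maker can really exploit an \emph{arbitrary} $d$-subset of $N_G(u)$. To handle this I would make the gadget symmetric over all $\binom{2d-1}{d}$ subsets, attaching for each subset its own huge host on which Maker wins the restricted game even while Breaker interferes.
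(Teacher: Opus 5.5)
\textbf{There is a genuine gap: the upper bound is not proved.} Your lower-bound half matches the paper. Theorem~\ref{thm:general.bounds}(i) gives $2\min\{s,t\}-1\le s^1_{BM}(K_{s,t})\le s^1_{MB}(K_{s,t})$ and $5\le s^1_{BM}(W_t)\le s^1_{MB}(W_t)$.

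For the upper bound, steps (1)--(3) are a wish list, and the one concrete instantiation you give cannot work as set up. In your plan, Maker first takes $d$ of the $2d-1$ edges at $u$. Only then must she complete $H-v$ over the resulting set $S$, robustly against Breaker, while $G-u$ contains no copy of $H$.
\begin{itemize}
\item For $K_{s,t}$, $S$ must be exactly the small side of Maker's $K_{s,t-1}$. So Maker needs all $s(t-1)$ edges from $S$ to $t-1$ common neighbours of $S$.
\item If $S$ had only $t-1$ common neighbours in $G-u$, Breaker, who knows $S$ by then, kills this with a single edge.
\item Hence every relevant $S$ has at least $t$ common neighbours in $G-u$, i.e.\ $K_{s,t}\subseteq G-u$. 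This contradicts your plan to ``check that without $u$ there are no $K_{s,t}$ at all''.
\item Once $S$ has enough common neighbours to survive Breaker's interference, Maker can typically win the $K_{s,t}$-game on $G-u$ by the same box-game argument. Then a minimal winnable subgraph need not contain $u$ at all.
\end{itemize}
Your step (3) argument that every edge at $u$ is indispensable is beside the point. What must be shown is that $u$, or some low-degree vertex, lies in every minimal winnable subgraph. That is precisely the unresolved tension between a robust host and Breaker winning on $G-u$. Deleting a single vertex also gives you no structural way to keep the rest $H$-free, since typically $\chi(H-v)=\chi(H)$.

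\textbf{The missing idea is already in the paper.} It is Theorem~\ref{thm:general.bound.Delta}, which you overlooked when dismissing the $2\Delta(H)-1$ bound. It gives $2\Delta_c(H)-1$, where $\Delta_c(H)$ is the maximum degree within one colour class of a proper $\chi(H)$-colouring. Its construction works as follows.
\begin{itemize}
\item It deletes a whole colour class, so Maker can first claim a blow-up $\widetilde H$ of $H'=H-c^{-1}(1)$. This happens inside some $G_0\in\mathcal{M}^1_{MB}(\widetilde H)$ with $\chi(G_0)=\chi(H)-1$ (Theorem~\ref{thm:construction.chromatic.clique}), and such a $G_0$ is $H$-free.
\item To every $(2\Delta_c(H)-1)$-set $S=\{v\}\cup\bigcup_j A_j\subseteq V(G_0)$, with $|A_j|=2$, many new vertices of degree $2\Delta_c(H)-1$ are attached.
\item Maker herself picks a fresh such vertex $w$ and the edge $wv$, then pairs the two edges from $w$ to each $A_j$.
\item Breaker's control over which vertex of each pair she gets is absorbed by pigeonhole and the hypergraph embedding Lemma~\ref{lem:embedding.ktd}.
\item Because $G_0$ is $H$-free, every minimal winnable subgraph must use one of the attached vertices. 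No particular vertex needs to survive with all its edges.
\end{itemize}

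With that theorem the statement is two lines. For $K_{s,t}$, the larger side is a colour class with $\Delta_c=\min\{s,t\}$. For $W_t$, in any optimal colouring the hub forms its own class, so a rim class gives $\Delta_c=3$.
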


\begin{thm}\label{thm:some.precise.degrees.biased}
Let $b\geq 1$, then the following holds:
\begin{enumerate}
\item[(a)] For every cycle $C_t$, we have $s_{MB}^b(C_t) = s_{BM}^b(C_t) = b+2 .$
\item[(b)] For the wheel $W_t$ with an even number $t$ of spokes, we have $s_{MB}^b(W_t) = s_{BM}^b(W_t) = 2b+3 .$
\item[(c)] For every tree $T$, we have $s_{MB}^b(T) = s_{BM}^b(T) = 1 .$
\end{enumerate}
\end{thm}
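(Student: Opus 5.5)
By Theorem~\ref{thm:general.bounds}(ii), $s_{BM}^b(H)\ge (b+1)(\delta(H)-1)+1$. With $\delta(C_t)=2$, $\delta(W_t)=3$ and $\delta(T)=1$ this gives the lower bounds $b+2$, $2b+3$ and $1$. Since $s^b_{BM}(H)\le s^b_{MB}(H)$ holds as well, it suffices in each case to exhibit some $G'\in\mathcal{M}^b_{MB}(H)$ with $\delta(G')$ at most the claimed value.

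My general device is the following. I construct a graph $G$ with $G\MBarrow_b H$ and a vertex $v$ such that $G-v\MBnotarrow_b H$, and then pass to a minimal winnable subgraph $G'\subseteq G$. Being MB-winnable is monotone under taking supergraphs, so $G'\not\subseteq G-v$, i.e.\ $v\in V(G')$. Hence $\delta(G')\le \deg_{G'}(v)\le\deg_G(v)$, and it only remains to make $\deg_G(v)$ equal to the target value.

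\textbf{Trees.} Let $G$ be a complete $D$-ary tree of depth $h=|V(T)|$, with $D$ huge compared to $b$ and $|V(T)|$. Maker embeds $T$ greedily in BFS order, rooted at the root of $G$. Each embedded vertex has $D$ children, and Breaker has claimed at most $b\cdot|E(T)|$ edges in total during the embedding. Therefore at every step there is a child edge whose whole subtree is still untouched, and Maker can extend the embedding along it. Any minimal winnable subgraph $G'\subseteq G$ is a forest with at least one edge. By minimality it has no isolated vertices, since deleting one keeps Maker's win. Hence $G'$ has a leaf and $\delta(G')=1$.

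\textbf{Cycles.} Let $v$ be adjacent to $u_1,\dots,u_{b+2}$, and let $F$ be a gadget on vertices containing $U=\{u_1,\dots,u_{b+2}\}$. Maker opens with $vu_1$. Breaker can claim at most $b$ of the remaining $b+1$ star edges, so in round $2$ Maker claims a second edge $vu_j$. After this, Maker plays in $F$ the $(1:b)$ game of connecting the specific pair $u_1,u_j$ by a Maker path of length exactly $t-2$; Breaker has had at most one extra move there.

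So I need $F$ such that:
\begin{itemize}
\item for every pair $i\neq j$, Maker (as second player, even after Breaker's head start) can build a $u_i$--$u_j$ path of length $t-2$;
\item $G-v=F$ has girth larger than $t$, so that Breaker trivially wins the $C_t$-game on $G-v$.
\end{itemize}
My attempt is the following. Hang from each $u_i$ a tree of huge degree and depth about $(t-2)/2$. Maker grows a branching subtree of this tree, as in the tree argument, to obtain many Maker-reachable endpoints at the correct distance. Then join the leaf levels of the different trees by a bipartite graph of large girth with a Ramsey/pseudorandom property: between any two large leaf sets there are many edges, so Breaker cannot block all of them. Choosing the depths to match the parity of $t-2$ gives paths of exactly the right length.

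\textbf{Wheels with $t$ even.} The same scheme applies with $\deg(v)=2b+3$. The count is: Maker takes one star edge, Breaker $b$, Maker one, Breaker $b$, Maker one. This uses exactly $2b+3$ edges, so Maker secures three star edges to $u_i,u_j,u_k$ in her first three moves. The gadget $F$ must then let Maker complete a copy of $W_t$ in which $v$ is a rim vertex and $u_i,u_j,u_k$ are its three neighbours (two rim vertices and the hub). In $F$ she must build the remaining rim path and the hub star. Evenness of $t$ should let $F$ be chosen bipartite-like with large "odd girth" structure, so that $F$ itself contains no $W_t$; this is presumably where the hypothesis enters, as it makes $W_t$ $3$-chromatic with a convenient colouring.

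The main obstacle is constructing the gadgets $F$ with both properties: rich enough that Maker wins the prescribed connection game against bias $b$, yet sparse enough (large girth, respectively $W_t$-free) that $G-v$ is a Breaker win. If the explicit tree-plus-pseudorandom construction becomes too delicate, I would fall back on a Ne\v{s}et\v{r}il--R\"odl-type amalgamation, in the spirit of Theorem~\ref{thm:construction.chromatic.clique}. That would produce $F$ as a sparse graph that is winnable for the auxiliary rooted game while containing no copy of $H$ at all.
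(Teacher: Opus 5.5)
Your lower bounds and your tree argument are correct and match the paper, which also uses a perfect tree of large arity, greedy embedding, and the fact that a minimal winnable subgraph of a tree has a leaf. The upper bounds for cycles and wheels, however, are not proved. The gadget $F$ is never constructed, and as specified it cannot exist in general.

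Because Maker starts at $v$, Breaker can answer $vu_1$ by claiming $b$ of the other $b+1$ star edges. The pair $\{u_1,u_j\}$ is then forced, and afterwards Breaker only has to block the $u_1$--$u_j$ paths of length $t-2$ in $F$. Suppose $F$ has girth larger than $t$ and $t\le 6$.
\begin{itemize}
\item Two distinct such paths sharing an internal vertex would close a cycle of length at most $2t-6\le t$.
\item Two internally disjoint such paths close a cycle of length $2t-4$, which is at most $t$ when $t\le 4$.
\end{itemize}
So these paths are pairwise internally disjoint, and for $t\in\{3,4\}$ there is at most one of them. Breaker destroys them all by claiming another edge of a path as soon as Maker touches it; for $t=3$ he simply takes $u_1u_j$. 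Hence your two requirements on $F$ contradict each other for $t\le 6$. For larger $t$, the tree-plus-pseudorandom gadget would need both high girth and a Maker win in an exact-length connection game against bias $b$ with a Breaker head start, and this is left completely open. The wheel gadget is even less specified.

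The missing idea is to reverse the order and use many disposable low-degree vertices. The paper starts from an acyclic base graph $G_0$, which is trivially a Breaker win:
\begin{itemize}
\item for cycles, a perfect $q$-ary tree of height $k$ with $t\in\{2k+1,2k+2\}$;
\item for wheels, a star with $r(b+1)$ edges and centre $v_0$.
\end{itemize}
For every $(b+2)$-set (resp.\ $(2b+3)$-set) $S\subseteq V(G_0)$ it attaches a large pool $A_S$ of new vertices completely joined to $S$. Maker first builds her structure inside $G_0$: $b+2$ edge-disjoint root-to-leaf paths, resp.\ $r$ star edges. Only then does she turn to a vertex $w\in A_S$ all of whose edges are still free. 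Such a $w$ exists because the pool is larger than the number of rounds played, so Breaker can never concentrate on it in advance.

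For cycles, claiming $wv_1$ and then one of the remaining $b+1$ edges at $w$ closes a $C_t$; for odd $t$, replace $v_1$ by its neighbour on its path. For $W_t$, Maker secures, for every $(2b+2)$-subset of her star neighbourhood $V_0$, a witness $w$ with $wv_0$ and two further Maker edges. Colour $xy$ red if some witness $w$ gives Maker the edges $wv_0,wx,wy$. There is then no blue $K_{2b+2}$, so by the choice of $r$ there is a red $K_{t/2}$. The rim $v_1w_1v_2w_2\cdots v_{t/2}w_{t/2}$ with hub $v_0$ is then a Maker copy of $W_t$; this alternation is exactly where evenness of $t$ is used. Every minimal winnable subgraph must contain some such $w$, whose degree is $b+2$ (resp.\ $2b+3$).
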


\medskip

\textbf{Organization of the paper.} In Section~\ref{sec:prelim} we define our notation and collect two winning criteria for Maker-Breaker games that are useful for the analysis of later strategies. 
In Section~\ref{sec:existence} we prove Theorem~\ref{finiteVsInfinite};
then Theorem~\ref{thm:construction.chromatic.clique} is proved in
Section~\ref{sec:clique.chromatic},
and the proofs of Theorems~\ref{thm:general.bounds},~\ref{thm:some.precise.degrees} and~\ref{thm:some.precise.degrees.biased} follow in Section~\ref{sec:degrees}.
We end the paper with some concluding remarks and open problems in Section~\ref{sec:concluding}.

\medskip

\section{Preliminaries and Notation} \label{sec:prelim}

We use standard graph theoretic notation. Given a graph $G$, we denote the set of its vertices with $V(G)$ and the set of its edges with $E(G)$. Let $v(G)$ be the size of $V(G)$ and similarly $e(G)$ the size of $E(G)$. Moreover, we denote the degree of a vertex $v$ of $G$ with $d_G(v)$, the minimum degree among all vertices of $G$ as $\delta(G)$ and the maximal degree as $\Delta(G)$. We denote the size of the largest clique of $G$ with $\omega(G)$. A colouring of the vertices of $G$ is caleld proper, if no two adjacent vertices share a colour. The \emph{chromatic number} $\chi(G)$ of $G$ is the smallest number of colours of a proper colouring of $G$.  We also define the maximum $2$-density $m_2(H)=\max\left\{\frac{e(F)-1}{v(F)-2}:~F\subseteq H,~v(F)\geq 3 \right\}$.

Let $n \in \mathbb{N}$, we define $[n]\coloneqq \{1, \dots, n\}$. We denote the complete graph on $n$ vertices as $K_n$, the path on $n$ vertices $P_n$ and the cycle on $n$ vertices as $C_n$. We denote a star with a central vertex and $n$ leaves as $S_n$ and a wheel with a central vertex and $n$ vertices, forming a cycle, adjacent to it as $W_n$. For $r,s \in  \mathbb{N}$ the graph $K_{r,s}$ is the complete bipartite graph with one bipartition class of size $r$ and one of size $s$. 

Given two graphs $F$ and $G$, we write $H = F + G$ if $H$ consist of vertex disjoint copies of $F$ and $G$ and there are no edges in between those copies. For a natural number $k$ we write $kF = F + F + \ldots + F$ for the graph consisting of $k$ vertex-disjoint copies of $F$. If $f$ is an edge of $F$, we denote with $F - f$ the graph on the vertex set $V(F)$ and with edge set $E(F) \setminus \{f\}$. If $v$ is a vertex in $V(F)$, we denote the graph on the vertex set $V(F) \setminus \{v\}$ and edge set $E(F) \setminus \{e \in E(F): v \in e\}$ as $F - v$.

Given a hypergraph $\mathcal{H}$, let $V(\mathcal{H})$ be its vertex-set and $E(\mathcal{H})$ the set of its hyperedges.
We set $v(\mathcal{H})=|V(\mathcal{H})|$ and $e(\mathcal{H})=|E(\mathcal{H})|$. If every hyperedge in $E(\mathcal{H})$ consists of $d$ vertices for some $d \in \mathbb{N}$, we say that $\mathcal{H}$ is $d$-\textit{uniform}. 
When $V_0$ is a subset of $V(\mathcal{H})$, we write $e_\mathcal{H}(V_0)$ for the number of all hyperedges in $\mathcal{H}$ containing only vertices in $V_0$.
We write $\Delta_2(\mathcal{H})$
for the maximum pair degree in $\mathcal{H}$,
i.e.~the largest number of hyperedges in $\mathcal{H}$ that two fixed vertices belong to.
Additionally, if two hypergraphs $\mathcal{H}_1$ and $\mathcal{H}_2$ are isomorphic, we write
$\mathcal{H}_1 \cong \mathcal{H}_2 $.

Finally, consider a Maker-Breaker game on the edge-set of a graph $G$. During the game we say that an edge is \emph{free} if it is claimed by no player yet. Moreover, we say that a player applies a \emph{pairing strategy} with a family of 
pairwise disjoint pairs $\mathcal{P}_1,\ldots,\mathcal{P}_k \subseteq E(G)$, if this player claims an element of a pair $\mathcal{P}_i$ whenever the opponent claimed an edge of the same pair in his/her previous move.

\medskip

We make use of the game
$\text{Box}(p,1;a_1,\ldots,a_n)$,
as it was introduced by Chv\'atal and Erd\H{o}s~\cite{chvatal1978biased}
but following the notation in~\cite{hefetz2014positional}.
The game is played on a hypergraph $(X,\mathcal{H})$ with $\mathcal{H}=\left\{F_1,\ldots,F_n\right\}$
such that the winning sets (boxes) are
pairwise disjoint and satisfy $|F_i|=a_i$ for every $i\in [n]$.
In every round, Maker claim $p$ elements from $X$, while Breaker claims only 1. As usual, Maker wins
if and only if she claims all elements of a winning set.

The following lemma gives a winning criterion for Breaker when all boxes have the same size.

\begin{thm}[Criterion for Box Game,~Theorem 2.1 in~\cite{chvatal1978biased}] \label{thm:Box.game}
If $a_i = m$ for every $1\leq i \leq n$, and $m > p \sum_{i=1}^n 1/i$,
then Breaker has a winning strategy for $\text{Box}(p,1;a_1,\ldots,a_n)$.
\end{thm}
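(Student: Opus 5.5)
We prove the Chv\'atal--Erd\H{o}s criterion for the box game $\text{Box}(p,1;m,\ldots,m)$ with $n$ boxes. We give Breaker the \emph{greedy strategy}: in each round, after Maker's move, Breaker claims a free element of a box that Maker has not yet completed and in which Breaker has claimed nothing, choosing such a box with the largest number of Maker's elements. Call this box \emph{killed}; Breaker then ignores it for the rest of the game. Every box is killed after at most $n$ Breaker moves. So Maker wins only if at some moment she fills a box that is still alive. We therefore bound the maximum number of Maker elements in a surviving box.

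The argument is an averaging potential. Let $A_0$ be the set of all $n$ boxes. Consider the alive boxes just before Maker's $i$-th move, for $i=1,\ldots,n$. At that moment $n-i+1$ boxes are alive, since Breaker has killed one box per round. Let $w_i$ be the average number of Maker's elements over these alive boxes at that time. The plan is to prove the recursion
\[
w_{i+1} \le w_i + \frac{p}{n-i+1} .
\]
This holds for two reasons. First, in round $i$ Maker adds $p$ elements, and these raise the average over the $n-i+1$ alive boxes by at most $p/(n-i+1)$. Second, Breaker then deletes from the alive family the box with the maximum Maker count, and deleting a maximum element does not increase the average.

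With $w_1=0$, this gives
\[
w_{i}+\frac{p}{n-i+1} \le p\sum_{j=n-i+1}^{n}\frac1j \le p\sum_{j=1}^n \frac1j .
\]
The left-hand side bounds the average over alive boxes right after Maker's $i$-th move. In particular, after Maker's move in the last round, only one box remains alive. Its Maker count equals the average, so it is at most $p\sum_{j=1}^n 1/j < m$. Hence that box is not full.

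The delicate point is that Maker might complete some box mid-way while the average is still small: averages do not bound maxima. To deal with this, we use the greedy choice more carefully. We argue that the maximum over alive boxes after Maker's $i$-th move is controlled by tracking the sub-family of the $k$ currently fullest alive boxes. Breaker always removes the fullest box, so a box that is fullest after round $i$ would be killed immediately. The critical quantity is therefore the count in the box that is fullest at the moment of Maker's move.

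The cleaner approach is to reverse the viewpoint, following Chv\'atal--Erd\H{o}s. Suppose Maker fills box $B$ in round $r$. Consider the last $k$ boxes killed plus $B$, and apply the averaging argument restricted to a nested family of alive boxes $B\in \mathcal{F}_r\subset\cdots\subset\mathcal{F}_1$ with $|\mathcal{F}_j|=r-j+1$. This yields that $B$ holds at most $p\sum_{j\le r}1/j\le p\sum_{j\le n}1/j<m$ elements, which contradicts $B$ being full.

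Setting up this nested-family averaging correctly is the main obstacle. The worry is that Maker might concentrate her elements on boxes killed long ago. This is handled by choosing $\mathcal{F}_j$ to be the alive boxes at round $j$ that survive to round $r$ together with the boxes killed in rounds $j,\ldots,r-1$. For this family, "remove the maximum" still holds in each step, and only Maker's moves inside $\mathcal{F}_j$ count.
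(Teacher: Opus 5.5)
The paper gives no proof of this statement (it is quoted from Chv\'atal and Erd\H{o}s). Your outline, in which BoxBreaker greedily kills the alive box with the most Maker elements and then a backward averaging argument is run, is the classical one. But the step you yourself call the main obstacle is set up incorrectly in your last paragraph.

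Suppose $\mathcal{F}_j$ consists of the boxes alive at round $j$ that survive to round $r$, together with the boxes killed in rounds $j,\ldots,r-1$. Then $\mathcal{F}_j$ is just the set of \emph{all} boxes alive at round $j$, since each such box is either killed in one of the rounds $j,\ldots,r-1$ or survives to round $r$. Hence $|\mathcal{F}_j|=n-j+1$, not $r-j+1$, and you are back at your first averaging argument. Its final family is the set of $n-r+1$ boxes alive at round $r$, and their average gives no bound on the count in $B$ unless $r=n$. So the claimed bound on $B$ does not follow from the family as you defined it.

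The repair is to discard the other surviving boxes. Let $B_i$ be the box killed in round $i$ and take $\mathcal{F}_j=\{B\}\cup\{B_j,\ldots,B_{r-1}\}$; this is what your phrase ``the last $k$ boxes killed plus $B$'' suggests.

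\begin{itemize}
\item Every member of $\mathcal{F}_j$ is still alive when Breaker kills $B_j$. So $B_j$ has the maximum Maker count within $\mathcal{F}_j$, and removing it does not raise the average.
\item Maker's $p$ elements in round $j$ raise the average over $\mathcal{F}_j$ by at most $p/(r-j+1)$.
\item The average starts at $0$, and the process ends with $\mathcal{F}_r=\{B\}$, whose average is exactly the count in $B$.
\end{itemize}

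Hence after Maker's move in round $r$ the box $B$ holds at most $p\sum_{j=1}^{r}1/j\le p\sum_{j=1}^{n}1/j<m$ elements. Here $r\le n$ because $B$ is still alive after $r-1$ kills.

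The ``worry'' you name is also misdiagnosed. Maker's elements in boxes killed before round $j$ play no role for $\mathcal{F}_j$. The real danger is dilution of the average by surviving boxes other than $B$, which is exactly why they must be excluded.
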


In fact, we may use this statement in the following situation.
Assume that in a game on a graph $G$,
we find a vertex $w\in V(G)$ and 
disjoint vertex sets $A_j$, with $j\in [\Delta-1]$ for some integer $\Delta$,
such that each set has size $|A_j|=s$ and there are free edges between $w$ and all vertices of the $A_j$. Then, by the above criterion it follows that, 
in a $(1:b)$ Maker-Breaker game, Maker (imagining to be Breaker in a Box Game) can claim one edge between
$w$ and each of the sets $A_j$ if $s \geq  b \ln(\Delta) + 1$.

\smallskip

Next to this, we make also use of the following general winning criterion for Maker. 

\begin{thm}[Weak Win Criterion,~Theorem 2 in~\cite{beck1982remarks}] \label{thm:Weak.Win}
Let a hypergraph $(X,\mathcal{F})$ and a bias $b\in\mathbb{N}$ be given. If
$$
\sum_{F\in \mathcal{F}} (1+b)^{-|F|} > \frac{b^2}{(1+b)^3} \Delta_2(\mathcal{F}) |X|
$$
holds, Maker (as first player) has a winning strategy for the $(1:b)$ Maker-Breaker game on 
$(X,\mathcal{F})$.
\end{thm}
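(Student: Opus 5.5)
We prove the Weak Win Criterion with the classical potential-function (Erd\H{o}s--Selfridge type) argument, this time from Maker's side.

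\textbf{Setup.} At any moment of the game, call a winning set $F\in\mathcal{F}$ \emph{alive} if Breaker has not claimed any element of it. Let $M$ denote Maker's current set of claimed elements. Give each alive set the weight $w(F)=(1+b)^{-|F\setminus M|}$, and let the potential $\Phi$ be the sum of $w(F)$ over all alive $F$. For a free element $x$, put $w(x)=\sum_{F\ni x,\ F \text{ alive}} w(F)$. For free elements $x,y$, define $w(x,y)$ analogously, summing over alive sets that contain both $x$ and $y$.

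At the start $\Phi_0=\sum_{F}(1+b)^{-|F|}$. If at the end of the game $\Phi>0$, some set is still alive. Since every element has been claimed by then, this set lies entirely in Maker's graph, so Maker wins. It therefore suffices to show that the total decrease of $\Phi$ over the whole game is at most $\frac{b^2}{(1+b)^3}\Delta_2(\mathcal{F})|X|$.

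\textbf{Maker's strategy and the change in one round.} In each round Maker claims a free element $x$ maximising $w(x)$. Breaker then answers with $y_1,\dots,y_b$.
\begin{itemize}
\item Maker's move multiplies the weight of every alive set through $x$ by $(1+b)$, so $\Phi$ increases by exactly $b\,w(x)$.
\item Breaker's claim of $y_j$ kills the alive sets through $y_j$. Measured after Maker's move, the weight of these sets is $w(y_j)+b\,w(x,y_j)$, where the weights on the right are taken before Maker's move.
\item Summing over $j$ only overestimates the loss, since sets killed twice are counted twice. The same holds if Breaker claims fewer than $b$ elements.
\end{itemize}
Hence the net change in one round is at least
\[
b\,w(x)-\sum_{j=1}^{b}\bigl(w(y_j)+b\,w(x,y_j)\bigr)\;\ge\; -b\sum_{j=1}^{b} w(x,y_j).
\]
The inequality uses the greedy choice $w(y_j)\le w(x)$.

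\textbf{Bounding the pair weights.} Take a set $F$ counted in $w(x,y_j)$. Before Maker's move it is alive and still has at least the two free elements $x$ and $y_j$, so $w(F)\le(1+b)^{-2}$. There are at most $\Delta_2(\mathcal{F})$ such sets, so $w(x,y_j)\le \Delta_2(\mathcal{F})(1+b)^{-2}$. Consequently each round lowers $\Phi$ by at most $b^2\Delta_2(\mathcal{F})(1+b)^{-2}$.

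\textbf{Counting rounds.} Every full round uses up $1+b$ elements of the board. So there are at most $|X|/(1+b)$ rounds, up to a final partial round in which Breaker has fewer moves; that round is covered by the same bound. Because Maker moves first, no Breaker move precedes a Maker move, so every decrease of $\Phi$ is charged to some round. The total loss is therefore at most $\frac{b^2}{(1+b)^3}\Delta_2(\mathcal{F})|X|$. By the hypothesis this is strictly less than $\Phi_0$, so $\Phi$ stays positive at the end.

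\textbf{Where the care is needed.} The delicate point is the bookkeeping in the middle two steps: Breaker's loss must be computed against the weights after Maker's amplification. This is exactly what produces the pair term $b\,w(x,y_j)$, and the fact that both $x$ and $y_j$ are still free is what yields the factor $(1+b)^{-2}$ and hence the constant $b^2/(1+b)^3$. I would double-check this step most carefully; everything else is routine.
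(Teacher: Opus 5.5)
The paper does not prove this statement but quotes it from Beck. Your argument is the standard potential-function proof of Beck's criterion, and it is correct.

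The one step to tighten is the round count. The claim ``at most $|X|/(1+b)$ rounds'' fails if Breaker takes fewer than $b$ elements in some round, and the final partial round cannot be charged the full per-round loss. Instead, charge at most $b\,\Delta_2(\mathcal{F})(1+b)^{-2}$ to each element Breaker claims; your cancellation $b\,w(x)-\sum_j w(y_j)\ge 0$ still works when Breaker claims at most $b$ elements in a round. Since Maker starts, Breaker owns at most $b$ times as many elements as Maker, hence at most $\frac{b}{1+b}|X|$ elements. This gives a total loss of exactly $\frac{b^2}{(1+b)^3}\Delta_2(\mathcal{F})|X|$.
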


\medskip

\section{Existence of minimal winnable graphs} \label{sec:existence}

In this section, we prove Theorem~\ref{finiteVsInfinite}. 
As we only consider unbiased games here, we drop the bias $b$ in all notations throughout the chapter, e.g.~we write $\mathcal{M}_{MB}(H)$ instead of $\mathcal{M}^1_{MB}(H)$,
and $G \MBarrow H$ instead of $G \MBarrow_1 H$.

Before we look at concrete graphs $H$, we show that statement (I) in Theorem~\ref{finiteVsInfinite} implies statement (II).

\begin{lemma}\label{lem:MBBM-Lemma}
If $\mathcal{M}_{MB}(H)$ contains infinitely many graphs, then $\mathcal{M}_{BM}(H)$ does as well. 
\end{lemma}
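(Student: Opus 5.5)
The plan is to turn minimal MB-winnable graphs into BM-winnable graphs by doubling them, and then use finiteness of $\mathcal{M}_{BM}(H)$ to get a contradiction. Two monotonicity facts are used throughout.

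\emph{Fact 1.} If $F\BMarrow H$ then $F\MBarrow H$, because an extra move never hurts Maker in a Maker-Breaker game. Consequently no proper subgraph of a graph in $\mathcal{M}_{MB}(H)$ is BM-winnable.

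\emph{Fact 2.} Every BM-winnable graph contains a member of $\mathcal{M}_{BM}(H)$ as a subgraph. Moreover, minimal graphs have no isolated vertices, so infinitely many minimal graphs means their edge numbers are unbounded.

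We argue by contradiction. Assume $\mathcal{M}_{BM}(H)$ is finite, and let $N$ be the maximum number of edges of its members. Pick $G\in\mathcal{M}_{MB}(H)$ with $e(G)$ much larger than $N$.

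\textbf{Step 1: the doubled graph $2G=G+G$ satisfies $2G\BMarrow H$.} Breaker's first move lies in one copy, say $A$. Maker then plays in the other copy $B$ as the first player there, following her MB-strategy on $G$. Whenever Breaker moves in $B$, she answers with that strategy. Whenever Breaker moves in $A$, she makes an arbitrary extra move in $B$, which cannot hurt her. Hence she completes a copy of $H$ inside $B$.

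\textbf{Step 2: locate a small minimal graph.} By Fact 2, $2G$ contains some $F\in\mathcal{M}_{BM}(H)$ with $e(F)\le N$. Write $F=F_A+F_B$ with $F_A\subseteq A$ and $F_B\subseteq B$. If one part, say $F_B$, is edgeless, then $F$ is isomorphic to a subgraph of $G$. That subgraph is proper because $e(F)\le N<e(G)$, yet it is BM-winnable, which contradicts Fact 1. This finishes the argument unless $F$ genuinely splits across both copies.

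\textbf{Step 3: the split case (the main obstacle).} Here both $F_A$ and $F_B$ are proper subgraphs of $G$. Each of them is MB-losing, but their disjoint union may still be winnable, for example when $H$ is disconnected. Only pigeonhole-type structure is available at this point. I would try the following, in order.

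First, vary $G$ over infinitely many members $G_1,G_2,\dots$ of $\mathcal{M}_{MB}(H)$. Since $\mathcal{M}_{BM}(H)$ is finite, a single $F$ occurs for infinitely many $i$, with fixed parts $F_A,F_B$ of bounded size.

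Next, try to find vertex-disjoint copies of $F_A$ and $F_B$ inside one large $G_i$. This would embed $F$ into a proper subgraph of $G_i$, again contradicting Fact 1.

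If that embedding fails, I would replace $2G$ by $G_i+G_j$ for two distinct large minimal graphs. I would then use that each component of a minimal graph is needed, which forces the relevant parts of $F$ to occupy large portions of the copies and so contradicts $e(F)\le N$.

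I expect the bookkeeping in this split case to be the real content of the proof. The most promising route is a careful choice of the large members of $\mathcal{M}_{MB}(H)$, for instance restricting to connected ones, so that bounded-size subgraphs can be embedded disjointly.
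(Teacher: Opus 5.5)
Your Steps 1 and 2 are correct, but Step 3 is a list of possible attacks rather than an argument, so the proposal does not prove the lemma.

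For connected $H$ the gap closes in one line, and you should have said so. $F_A$ and $F_B$ are proper subgraphs of $G$, hence not MB-winnable. Breaker opens arbitrarily and afterwards always answers in the part where Maker just moved. This keeps Maker from a copy of $H$ inside $F_A$ and inside $F_B$, and a copy of a connected $H$ lies in one part. So $F$ is not BM-winnable, a contradiction. This even shows that every minimal BM-winnable subgraph of $2G$ contains a whole copy of $G$.

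For disconnected $H$, however, Step 3 aims at the wrong target. You try to show that no small member of $\mathcal{M}_{BM}(H)$ sits inside $2G$ split across both copies, but nothing excludes this. For example, let $H=2T$ and suppose $G$ contains some $P$ with $P\BMarrow T$. Then $P+P\subseteq 2G$ is BM-winnable for $H$, and its size depends only on $P$. One copy of $P$ in each $G_i$ is enough that none of your three suggestions rules it out. So there is no reason to expect a contradiction from the split $F$ itself. What you need is a member of $\mathcal{M}_{BM}(H)$ that is \emph{forced} to be large, not information about an arbitrary one.

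The paper constructs such a member directly; in your notation it goes as follows. Take $G\in\mathcal{M}_{MB}(H)$ large. Then $2G$ is BM-winnable while $G$ is not, so pick $F\subseteq G$ minimal such that $F+G$ is BM-winnable, and check directly that $F+G\in\mathcal{M}_{BM}(H)$.
\begin{enumerate}
\item[(i)] Deleting an edge of $F$ is covered by the choice of $F$.
\item[(ii)] For an edge $e$ of the copy of $G$, Breaker opens with some $f\in E(F)$ and then answers in the part where Maker just played. Thus Maker plays first on both $F-f$ and $G-e$, which are proper subgraphs of $G$ and hence not MB-winnable.
\end{enumerate}
Since $F+G$ contains all of $G$, it is large, and no contradiction argument is needed.

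Be aware, though, that step (ii) only shows that Maker gets no copy of $H$ inside $F-f$ or inside $G-e$. That settles connected $H$. The paper does not discuss copies of a disconnected $H$ that straddle the two parts, which is exactly the concern you raised. So that step also needs further justification when $H$ is disconnected.
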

   
\begin{proof}
    Let $H$ be a fixed graph such that $\mathcal{M}_{MB}(H)$ is infinite. To show that $\mathcal{M}_{BM}(H)$ is also infinite, we aim to show that for any $n\in\mathbb{N}$, there is some graph $G\in \mathcal{M}_{BM}(H)$ such that $v(G)\geq n$.

    Since $\mathcal{M}_{MB}(H)$ is infinite, we first find $G'\in\mathcal{M}_{MB}(H)$ with $v(G')\geq n$. First note that $G'$ is not BM-winnable: Given any move $e\in E(G')$ of Breaker, Maker has to play the Maker-Breaker $H$-game on $G-e$ which is won by Breaker due to the minimality of $G'$. 
    
    Now consider playing the Breaker-Maker $H$-game on $2G'$. Breaker can only select an edge from one of the two disjoint copies of $G'$ in his first move. But $G'$ is MB-winnable for $H$, so Maker can force a victory by being the first to play on the untouched copy.
    Thus, $2G'$ is BM-winnable, but $G'$ is not. Hence, there must be a subgraph $F\subseteq G'$ such that $G:=F+G'$ is BM-winnable, but for any proper subgraph $F'\subsetneq F$ the graph $F'+G'$ is not BM-winnable. Clearly $G$ satisfies $v(G)\geq n$, so it remains to show that $G\in \mathcal{M}_{BM}(H)$.

    Since we have chosen $G$ to be BM-winnable, we only need to show minimality.
    Indeed, let $e\in E(G)$ so we need to prove that $G-e$ is not BM-winnable.
    If $e$ is part of the copy of $F$, then this follows from our choice of $F$.
    So we may assume that $e$ is an edge in the copy of $G'$.
    By making an arbitrary first move $f$ in $F$, Breaker can ensure that Maker needs to win the Maker-Breaker $H$-game in $(F-f)+(G'-e)$.    
    Note that both $F-f$ and $G'-e$ are proper subgraphs of the minimal MB-winnable graph $G$, so neither $F-f$ nor $G-e$ are MB-winnable.
    Thus, Breaker can ensure that Maker never gets a copy of $H$ in any of the two disjoint parts of $(F-f)+(G-e)$.
    Hence, $G-e$ is not BM-winnable, as required. 
\end{proof}

To complete the proof of Theorem~\ref{finiteVsInfinite}, it is sufficient to show that 
(a) $\mathcal{M}_{BM}(H)$ is finite for all graphs $H$ without a connected component with at least three edges and (b) $\mathcal{M}_{MB}(H)$ is infinite for all graphs $H$ with such a component. 
Part (a) is done in Subsection~\ref{sec:finite.case},
and part (b) is done in Subsection~\ref{sec:infinite.case}.

\medskip

\subsection{Forests with small components} \label{sec:finite.case}

In this subsection we show part (a) above, i.e.~that $\mathcal{M}_{BM}(H)$ is finite if each component of $H$ has at most two edges.

\begin{lemma}\label{lem:finite:matching.and.cherries}
    Let $H := a P_2 + b P_3$ with $a, b \in \mathbb{N}_0$, then 
	$\mathcal{M}_{BM}(H)$ is finite.
\end{lemma}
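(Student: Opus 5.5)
Our plan is to show that every graph $G\in\mathcal{M}_{BM}(H)$ has a bounded number of edges, with the bound depending only on $a$ and $b$. Since a minimal BM-winnable graph has no isolated vertices (deleting one would leave a proper subgraph that is still winnable), this bounds $v(G)$ as well, and so gives finiteness. The idea is to find, inside any BM-winnable graph with many edges, a subgraph of bounded size that is already BM-winnable. Minimality then forces $G$ to equal that subgraph.

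\textbf{Step 1: an explicit bounded winnable graph.} We first check that a large matching or a large star forest suffices for Maker in the Breaker-first game. If $G$ contains a matching of size $2(a+2b)+1$, say, then Breaker's first move kills at most one edge. After that, each pair of consecutive rounds lets Maker claim a fresh matching edge, and she can organize these into $a$ copies of $P_2$ and $b$ copies of $P_3$.

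For the $P_3$ components, we use vertex-disjoint cherries. Suppose $G$ contains $k$ vertex-disjoint copies of $P_3$, or better of $K_{1,3}$ (stars with three leaves). On a star $K_{1,3}$, Maker can secure a $P_3$ even when Breaker touches it first, by answering locally: Breaker takes one edge and Maker takes one of the remaining two, then Maker takes the last edge if it is still free. So with enough disjoint $K_{1,3}$'s Maker wins, using a local response strategy. Concretely, we show that $(b+c)K_{1,3}+(a+c)P_2$ is BM-winnable for a suitable constant $c$, handled by a potential or pairing argument.

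\textbf{Step 2: structural dichotomy.} Let $G$ be BM-winnable with $e(G)$ huge. We apply a Ramsey-type dichotomy. Either $G$ has a large matching $M$, or $G$ has a small vertex cover (at most $2|M|$ vertices). In the latter case some cover vertex has huge degree. We then consider the high-degree vertices, and greedily extract many vertex-disjoint stars $K_{1,3}$ together with many disjoint matching edges. If there are few high-degree vertices and few edges elsewhere, then $e(G)$ is bounded.

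The key point is this. If $G$ is BM-winnable, Maker must be able to build $b$ vertex-disjoint cherries plus $a$ extra disjoint edges. This requires $G$ to contain at least $b$ vertex-disjoint cherries and $a+b$ disjoint components' worth of structure. We want to argue that the bounded subgraph $G'$ (chosen with high-degree centers truncated to degree $3$ or a bounded degree, plus bounded matching parts) is also BM-winnable. This uses a strategy-transfer argument: in $G$, a vertex of huge degree effectively acts like a star of bounded degree $d$ once $d$ exceeds the number of moves that matter, which is about $2(a+2b)$.

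\textbf{Main obstacle.} The hard part is the transfer argument. We must show that replacing each high-degree star by a bounded-degree star, and dropping low-structured edges, does not destroy Maker's win. Here we try an explicit characterization of BM-winnability for $aP_2+bP_3$ in terms of the number of vertex-disjoint stars of degree at least $3$ (or $2$) and the matching number. Then for any winnable $G$, we exhibit the witnessing bounded configuration as a subgraph.

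Deriving this characterization needs care in the boundary cases. One such case is a star of degree exactly $2$: in the Breaker-first game, Breaker can take one of its two edges before Maker moves, so Maker cannot count on completing a cherry there. Another is the interplay of shared vertices between cherries. We handle these by a case analysis on small configurations (paths $P_4$, triangles, etc.) and then cap all degrees at a constant such as $2(a+2b)+3$.
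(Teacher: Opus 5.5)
There is a genuine gap. Your opening reduction, that a minimal graph must coincide with any bounded BM-winnable subgraph it contains, is correct. But it only reformulates the lemma: all the content lies in showing that every BM-winnable graph contains such a bounded subgraph, and your proposal never does this. Several of the concrete claims you do make are also false.

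\textbf{False claims in your proposal.}
\begin{enumerate}
\item A matching contains no $P_3$, so in Step 1 the matching edges cannot be ``organized into'' the $b$ cherries. The large-matching branch of your dichotomy therefore gives nothing towards the $P_3$-components.
\item The local answer on a single $K_{1,3}$ fails. If Breaker opens there and Maker answers there, Breaker simply takes the third edge. So Breaker, moving first, wins the $P_3$-game on $K_{1,3}$. You would have to pair two such configurations and answer in the partner, so that Maker is effectively first on one of them.
\item The characterization you intend, via disjoint stars of degree at least $3$ or $2$ and the matching number, cannot exist. For even $n$, the graphs $nP_3+\frac{n}{2}P_2$ and $P_{3n}$ both have maximum degree $2$, exactly $n$ vertex-disjoint cherries, and matching number $3n/2$. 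Yet Breaker wins even the $P_3$-game on the first, since he answers every Maker move in the same component. On the second, for large $n$, Maker wins the $bP_3$-game by pairing disjoint copies of $P_4$. So the relevant local structures are $P_4$, $K_3$ and $K_{1,3}$.
\item The ``strategy transfer'' for high-degree vertices is only asserted, not proved.
\end{enumerate}

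\textbf{The missing idea.} Use minimality one edge at a time instead of extracting a bounded winnable subgraph.
\begin{enumerate}
\item Let $G\in\mathcal{M}_{BM}(H)$ and $k=6b+4a$. Let $H'$ be $H$ with one component removed (a $P_3$ if $b\ge 1$). Deleting any vertex from a copy of $H$ leaves a copy of $H'$, so Maker wins the $H'$-game on $G-v$ with Breaker first.
\item Suppose $d_G(v)>k$ and $e\ni v$. On $G-e$, Maker plays this strategy on $G-v$ and a pairing on the at least $k$ remaining edges at $v$, always answering on the board Breaker just used. She collects at least $k/2\ge v(H')+2$ edges at $v$, two of which avoid her copy of $H'$. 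So she gets $H'+P_3\supseteq H$, contradicting minimality. Hence $\Delta(G)\le k$; this is the rigorous form of your truncation.
\item $G$ cannot contain $2(a+b)+1$ vertex-disjoint copies of $P_4$, $K_3$ or $K_{1,3}$. Pairing $2(a+b)$ of them, and taking the middle edge of a $P_4$, already gives $a+b$ disjoint cherries without using the last copy.
\item Bounded degree together with this packing bound bounds the number of $P_4$-copies. This in turn bounds the number and order of the non-star components, because every vertex outside a fixed $P_4$ or $K_3$ of its component is an endpoint of some $P_4$.
\item Star components with at least three edges are few, by the packing bound. Minimality leaves at most $2a$ components with at most two edges.
\end{enumerate}

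Your proposal needs an argument of this kind, and none of it is present.
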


\begin{proof}

The claim is trivial for $a=b=0$, so we may assume that  $a>0$ or $b>0$.
For notational convenience set $k=6b+4a$ and $t=2b+2a+1$.
Let $G\in \mathcal{M}_{BM}(H)$ be arbitrary.
To show that $\mathcal{M}_{BM}(H)$ is finite we aim to prove that $v(G)\leq(2t+a)\cdot(24k^3t+4)$.
In order to prove this upper bound on $v(G)$, we first prove a sequence of simple claims.

\begin{clm} \label{clm:finite:matching.and.cherries:max.degree}
$\Delta(G) \leq k$
\end{clm}

\begin{proof}
For a contradiction, suppose that there is some $v\in V(G)$ with $d_G(v)\geq k+1$.

Set $H'\coloneqq (a-1)P_2+bP_3$ if $b=0$ and $H'\coloneqq aP_2+(b-1)P_3$ if $b\geq 1$.
Note that in both cases we have $H\subseteq H'+P_3$ and $v(H')\leq  \frac{k}{2}-2$.
Moreover if a graph $F$ contains a copy of $H$, then for any $w\in V(F)$ the graph $F-w$ must contain a copy of $H'$. 

Now consider the unbiased game on $G-v$ with Breaker moving first. By the last property, Maker must have a strategy to claim a copy of $H'$ on $G-v$, since otherwise she cannot claim a copy of $H$ in the game on $G$.

Next, let $e\in E(G)$ be any edge incident to $v$.
We prove that Maker has a strategy to claim $H$ in the game on $G-e$:
Maker divides the edges of $G-e$ into two disjoint boards. The first consists of the $d_G(v)-1 \geq k$ edges incident to $v$ in $G-e$, the other of all the edges of $G-v$.
Whenever Breaker claims an edge on one of the boards, Maker claims an edge on the same board.
For this she plays her winning strategy for $H'$ on $G-v$ and a pairing strategy on the edges incident to $v$.
The latter guarantees that she obtains at least $\lfloor\frac{d_G(v)-1}{2}\rfloor \geq \frac{k}{2}$ of these edges.
Since $\frac{k}{2}-v(H')\geq 2$, at least two of the edges do not intersect her copy of $H'$ in $G-v$.
Thus she manages to claim a copy of $H'+P_3\supseteq H$, as needed.

Hence $G-e$ is BM-winnable for $H$, which gives the required contradiction to the minimality of $G$.
\end{proof}

\begin{clm} \label{clm:finite:matching.and.cherries:P3}
$G$ contains no family of $t$ pairwise vertex-disjoint subgraphs isomorphic to 
$P_4$, $K_3$ or $K_{1,3}$.
\end{clm}

\begin{proof}
Assume $G$ has $t$ pairwise vertex-disjoint subgraphs $G_1,\ldots,G_t$ 
that are isomorphic to $P_4$, $K_3$ or $K_{1,3}$. 
Then consider the following strategy for Maker:
Fix the pairs $\{G_{2i-1},G_{2i}\}$ with $i\in [a+b]$
and play as follows. Whenever Breaker claims a first edge in one of the 
mentioned pairs,
w.l.o.g.~let it be the graph $G_{2i-1}$ for some $i\in [a+b]$, then Maker 
claims an edge in the other graph $G_{2i}$,
with the restriction that it is the middle edge if $G_{2i}$ is 
isomorphic to $P_4$. Moreover, when Breaker later claims another edge 
of the pair $\{G_{2i-1},G_{2i}\}$, Maker claims a second edge in $G_{2i}$,
thus completing a copy of $P_3$.

This way, Maker claims $a+b$ vertex-disjoint copies of $P_3$,
and hence wins the $H$-game.
However, as we did not use $G_t$ for the strategy,
we see that $G$ is not minimal BM-winnable for $H$,
a contradiction.
\end{proof}

\begin{clm} \label{clm:finite:matching.and.cherries:components}
Every component of $G$ has at most $24k^3t+4$ vertices.
\end{clm}

\begin{proof}
As $\Delta(G) \leq k$, we have that every edge of $G$ belongs to at most $3k^2$
copies of $P_4$. When we delete all the vertices of one copy of $P_4$ in $G$ 
(and hence delete at most $4k$ edges of $G$ by the degree bound),
we can delete at most $12k^3$ copies of $P_4$. 
Thus, using Claim~\ref{clm:finite:matching.and.cherries:P3} it follows that $G$
cannot have more than $12k^3t$ copies of $P_4$ in $G$.

Now, consider any component $C$ of $G$. If $C$ is a star, then $v(C)\leq k+1$ by 
Claim~\ref{clm:finite:matching.and.cherries:max.degree}.
Otherwise $C$ contains a copy of $P_4$ or a copy of $K_3$, fix one such subgraph and call it $F$.
For every vertex $v$ in $V(C)\setminus V(F)$, there is a path from $v$ to $F$, as $C$ is connected, and 
this path can be extended by at least 2 edges of $F$. That is, $v$ is the endpoint of a copy of $P_4$. 
Since the number of $P_4$-copies is bounded by $12k^3t$,
there can be at most $24k^3t$ endpoints of $P_4$-copies,
and hence the claimed bound on $v(C)$ follows.
\end{proof}

To finish the argument, let $\bar{C}_1,\ldots,\bar{C}_r$ be the components of $G$ which are not stars, 
and let $C_1^\ast,\ldots,C_s^\ast$ be the components of $G$ which are stars.
Each of the components $\bar{C}_1,\ldots,\bar{C}_r$ contains a copy of $P_4$ or $K_3$, hence $r\leq t$ by Claim~\ref{clm:finite:matching.and.cherries:P3}.
Among the stars $C_1^\ast,\ldots,C_s^\ast$ there can be at most $t$ stars with more than 2 edges, again by Claim~\ref{clm:finite:matching.and.cherries:P3}. 
Moreover, since $P_3 \not\MBarrow P_3$, the remaining star components can only help to create the $a$ matching edges of $aP_2+bP_3$, and by minimality of $G$, their number cannot be larger than $2a$,
as $2a$ matching edges are enough for claiming a matching of size $a$.
As every component has at most $24k^3t+4$ vertices, we conclude
$v(G) \leq (2t + 2a)\cdot (24k^3t+4)$.
\end{proof}

\medskip

\subsection{Infinitely many minimal winnable graphs} \label{sec:infinite.case}

The goal of this subsection is to show part (b) mentioned at the beginning of Section~\ref{sec:existence}, i.e.~to show that $\mathcal{M}_{MB}(H)$ is infinite if $H$ has a component with at least three edges.
We first prove this in the case that $H$ is a forest (see Lemma~\ref{forests}) and afterwards for graphs containing a cycle (see Lemma~\ref{cycles}).
On the way to Lemma~\ref{forests} we first study trees, and make use of an inductive argument.
For the base case of this inductive proof we need to deal separately with three types of star-like graphs. First we consider the star $S_r$. Afterwards we consider $S_{r,s}$, defined as the double star consisting of a star with $r$ leaves and a star with $s$ leaves plus an extra edge between the centres of those two stars. Finally, we deal with $S_{r,s,t}$,  the triple-star consisting of a star of size $r$, a star of size $s$ and a star of size $t$, plus two edges between the centre of the second star and the centres of the other two stars.

\begin{lemma}\label{lem:star}
    Let $r \geq 3$. Then $\mathcal{M}_{MB}(S_r)$ consists of infinitely many graphs.
\end{lemma}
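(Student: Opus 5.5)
The plan is to show that for every $n$ there is a graph $G_n$ with $G_n \MBarrow S_r$ such that every MB-winnable subgraph of $G_n$ has at least $n$ vertices. Any minimal MB-winnable subgraph of $G_n$ (one exists because $G_n$ is finite) then lies in $\mathcal{M}_{MB}(S_r)$ and has at least $n$ vertices, so $\mathcal{M}_{MB}(S_r)$ is infinite.

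\textbf{Why the degree must be capped.} If some vertex has degree at least $2r-1$, Maker moving first simply claims edges at that vertex and obtains $r$ of them, since Breaker takes at most one edge at it per round. So $S_{2r-1}$ is winnable, and any winnable graph with a vertex of degree $\geq 2r-1$ contains this finite winning configuration. Therefore the graphs $G_n$ must satisfy $\Delta(G_n)\leq 2r-2$. Maker then cannot win at a single vertex and must create a double threat: she must reach a position in which two distinct vertices each carry $r-1$ of her edges and still have a free edge, which Breaker cannot block in one move.

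\textbf{The construction.} I would build $G_n$ as a long chain $v_1,\dots,v_n$ with path edges $v_iv_{i+1}$. Pendant leaves are attached so that each internal $v_i$ has degree exactly $2r-2$. The ends $v_1,v_n$ get a little extra structure so that a forcing sequence can start at one end and be completed at the other. Maker's intended strategy is to play along the chain. Each move $v_iv_{i+1}$ raises Maker's count at $v_{i+1}$ and creates a threat that Breaker must answer locally. The extra room at the last vertex then converts the final threat into a double threat.

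\textbf{Breaker on subgraphs.} The matching step is that on every proper subgraph, and more generally on every subgraph on fewer than $n$ vertices, Breaker wins by a pairing strategy. The goal is to pair up edges so that at every vertex $v$ at least $d(v)-r+1$ of its incident edges are covered by pairs lying entirely at $v$. Once a chain edge or a vertex is missing, the forcing sequence can no longer be completed, and this local slack is what lets such a pairing exist. In parallel I would try to prove a clean lemma: Breaker, moving second, wins the $S_r$-game on every forest of maximum degree at most $2r-2$ whose structure is ``short'', via a suitable orientation and pairing argument.

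\textbf{Main obstacle.} I expect the hardest step to be getting the degree counts at the chain vertices exactly right. They must satisfy two competing requirements at once: Maker's chain threats must genuinely force Breaker's responses on the full graph, while a valid Breaker pairing must exist on every subgraph that lacks even one edge of the chain. I would first work out the case $r=3$ by hand, with $\Delta\leq 4$, to find the correct gadget and then generalize it. If the chain construction fails, the fallback is a high-girth $(2r-2)$-regular graph: I would argue that small subgraphs of it are forests, on which Breaker wins by the pairing lemma, and show that Maker wins on the whole graph by a Weak-Win-type counting argument in the spirit of Theorem~\ref{thm:Weak.Win}.
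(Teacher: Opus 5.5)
Your architecture (a chain of pendant stars, Maker forcing Breaker's replies along the chain and finishing with a double threat, Breaker pairing on every graph that misses a chain edge, and then extracting a minimal winnable subgraph that must contain the whole chain) is the paper's. However, the one concrete parameter you commit to breaks it, and the end gadgets and both strategies are left unspecified.

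Suppose every internal chain vertex has degree $2r-2$. For adjacent internal vertices $v_i,v_{i+1}$, the sets $N(v_i)\setminus\{v_{i+1}\}$ and $N(v_{i+1})\setminus\{v_i\}$ are disjoint, each of size $2r-3$. So $G_n$ contains the double star whose two centres each have $2r-3$ leaves, and Maker wins on it:
\begin{itemize}
\item She claims $v_iv_{i+1}$.
\item Breaker's reply is incident to at most one of $v_i,v_{i+1}$.
\item At the other centre Maker moves first on $2r-3$ free edges, takes one and pairs the remaining $2r-4$, ending with $1+1+(r-2)=r$ edges there.
\end{itemize}
So $G_n$ has a winnable subgraph on $4r-4$ vertices. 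Your claim that every winnable subgraph of $G_n$ has at least $n$ vertices is therefore false, and Breaker cannot win on subgraphs missing only a distant chain edge. Your pairing criterion should also require $d(v)-r+1$ \emph{pairs} at $v$, not edges. At degree $2r-2$ that means every edge at $v$ is paired at $v$, which cannot hold at both ends of $v_iv_{i+1}$. The high-girth fallback fails for the same reason: every triangle-free $(2r-2)$-regular graph contains this double star, so forests of maximum degree $2r-2$ are not Breaker wins in general.

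The missing idea is the paper's degree choice. Only the two ends $x_1,x_\ell$ get $2r-3$ pendant leaves (degree $2r-2$), while every internal $x_i$ gets $2r-5$ pendant leaves (degree $2r-3$). Then holding both chain edges at an internal $x_i$, or one chain edge at an end, is only a \emph{single} threat. Maker reaches $2+\lceil(2r-5)/2\rceil=r$ (resp.\ $1+\lceil(2r-3)/2\rceil=r$) edges there if she moves there first, but only $r-1$ if Breaker does. This forces Breaker to answer inside the star at $x_1$ after Maker's opening move $x_1x_2$, and inside the star at $x_i$ each time Maker adds $x_ix_{i+1}$. The last move $x_{\ell-1}x_\ell$ then threatens at $x_{\ell-1}$ and $x_\ell$ simultaneously.

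Conversely, on the component of $x_1$ in $G_\ell-x_ix_{i+1}$ (the other component is symmetric), Breaker pairs:
\begin{itemize}
\item all $2r-2$ edges at $x_1$, including $x_1x_2$;
\item $2r-4$ edges at each $x_j$ with $2\le j<i$, including $x_jx_{j+1}$;
\item $2r-6$ star edges at $x_i$.
\end{itemize}
This caps Maker at $r-1$ edges at every vertex.

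Incidentally, a win need not come from threats at two distinct vertices. A single vertex with $r-1$ Maker edges and two free edges suffices, and that is how Maker completes the star in this strategy.
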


\begin{proof}
Let $\ell \geq 2$ be a natural number. We construct a graph $G_\ell$ as follows; see also Figure~\ref{fig:star.construction}. We start with a path $P_\ell$ of length $\ell - 1$, and vertices $x_1, x_2, \ldots, x_{\ell}$. Then, we attach a star $S_{x_1}$ of size $2r - 3$ with centre $x_1$, a star $S_{x_{\ell}}$ of size $2r - 3$ with centre $x_{\ell}$, and a star $S_{x_i}$ of size $2r - 5$ with centre $x_i$ for every $2\leq i \leq \ell - 1$. We claim that for every $\ell$, Maker wins the $S_r$-game on $G_\ell$, but loses it on $G_\ell - e$ for every edge $e$ in the path $P_\ell$. 

\begin{center}
\begin{figure}[t] 
	\begin{center}
\includegraphics[page=1,scale=0.6]{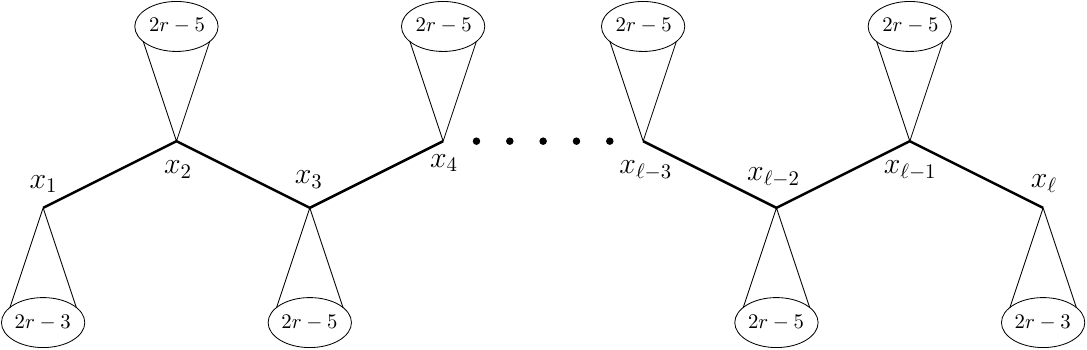}
	\end{center}
	\caption{Construction for the $S_r$-game.}
	\label{fig:star.construction}
\end{figure}
\end{center}

\begin{clm} \label{clm:stars.claim1}
$G_\ell \MBarrow S_{r}$. 
\end{clm}

\begin{proof}
Maker starts her strategy by claiming the edge $x_1x_2$. 
If Breaker responds with an edge that is not part of the star $S_{x_1}$, Maker afterwards claims an edge of $S_{x_1}$ herself. Then there are $2r - 4$ free edges left in $S_{x_1}$. By a pairing strategy, Maker can get $r - 2$ of them, and finish a copy of $S_r$.

Hence, from now on we may assume that Breaker claims an edge in $S_{x_1}$ in his first turn. 
Then in the second round, Maker claims the edge $x_2x_3$. Then, as longs as Breaker claims an edge of $S_{x_i}$ in round $i$ for $i\in [\ell - 2]$, Maker responds by claiming the edge $x_{i + 1}x_{i + 2}$ in round $i + 1$. Once there is a round $i$, where Breaker does not play in the star $S_{x_i}$, Maker plays in the star $S_{x_i}$ in round $i + 1$. Afterwards, there are $2r - 6$ free edges left in $S_{x_i}$. Again by pairing, Maker can get $r - 3$ of these edges, finishing a copy of $S_r$ together with the edges 
$x_{i - 1}x_{i}$, $x_i x_{i + 1}$ and the edge that Maker claimed in round $i + 1$.

\smallskip

Hence, we may assume from now on that Breaker
claims an edge of $S_{x_i}$
in round $i$ for every $i\in [\ell-2]$.
Then, in round $\ell - 1$, Maker claims the edge $x_{\ell - 1} x_\ell$ and creates a double-threat. If Breaker does not respond in $S_{x_{\ell - 1}}$, Maker can claim an edge there and wins by pairing on $S_{x_{\ell - 1}}$, as before. Otherwise, Breaker does not play in $S_{x_{\ell}}$ and Maker wins analogously by claiming edges of $S_{x_{\ell}}$.
\end{proof}

\begin{clm} \label{clm:stars.claim2}
Let $e$ be an edge in $P_\ell$. Then $G_\ell - e\MBarrow S_{r}$ does not hold.    
\end{clm}

\begin{proof}
Let $e = x_ix_{i + 1}$ with $i \in [\ell-1]$. Then, the graph $G_\ell - e$ consists of two connected components. It is sufficient to show that Breaker has a strategy to win the $S_r$-game on each of them separately, and due to symmetry it is sufficient to only look at the component containing $x_1$.
Breaker can win by a pairing argument on this component. He builds $r - 2$ pairs in $S_{x_1}$ and pairs the remaining free edge of $S_{x_1}$ with the edge $x_1 x_2$. For every $2\leq j\leq i - 1$, Breaker builds $r - 3$ pairs in $S_{x_j}$ and pairs the remaining edge of $S_{x_j}$ with $x_jx_{j+1}$. Finally, he builds $r - 3$ pairs in $S_{x_i}$, leaving the last free edge unpaired. By following that strategy, Breaker ensures that Maker cannot get a vertex of degree $r$, and hence no copy of $S_r$.
\end{proof}

The previous two claims imply that for every $\ell\geq 2$, the graph $G_\ell$ contains a minimal MB-winnable graph for $S_r$ that uses all edge of $P_\ell$ and hence has at least $\ell$ vertices. As $\ell$ is arbitrary, this leads to infinitely many minimal MB-winnable graphs for $S_r$, finishing the proof of Lemma~\ref{lem:star}.
\end{proof}

\medskip

\begin{lemma}\label{lem:double-star}
    Let $r, s \geq 1$ be positive integers. Then $\mathcal{M}_{MB}(S_{r,s})$ consists of infinitely many graphs.
\end{lemma}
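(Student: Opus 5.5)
We follow the strategy of Lemma~\ref{lem:star}: for every $\ell\geq 2$ we build a graph $G_\ell$ around a long path $x_1x_2\cdots x_\ell$ with pendant stars at the $x_i$. The goal is that Maker wins the $S_{r,s}$-game on $G_\ell$, while Breaker wins on $G_\ell-e$ for every path edge $e$. Then $G_\ell$ contains a member of $\mathcal{M}_{MB}(S_{r,s})$ that uses all edges of the path, so it has at least $\ell$ vertices. Letting $\ell\to\infty$ gives infinitely many minimal winnable graphs. By symmetry we assume $r\geq s$.

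In the host graph a copy of $S_{r,s}$ in Maker's graph is an edge $uv$ with $d_M(u)\geq r+1$ and $d_M(v)\geq s+1$ whose leaf sets can be chosen disjoint. This is automatic when the leaves are pendant vertices or path vertices far apart. For the construction we attach to $x_1$ and $x_\ell$ pendant stars of size $2r-1$ and to each interior $x_i$ a pendant star of size $2r-3$. These values are provisional and will be tuned by the pairing count below. The intuition is that one extra Maker path edge at $x_i$, together with a tempo gain in the pendant star, pushes $x_i$ over the threshold. The neighbouring vertex must then also reach degree $s+1$, and it gets this from its already claimed path edges plus pairing in its own star.

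\emph{Maker's strategy on $G_\ell$} mirrors Claim~\ref{clm:stars.claim1}. Maker claims $x_1x_2$, then $x_2x_3$, and so on. Each new path edge creates a threat: if Breaker does not immediately answer in the pendant star of the current vertex $x_i$, Maker takes a star edge there. She then pairs the rest of that star, so $x_i$ reaches degree $r+1$. Meanwhile $x_{i-1}$ or $x_{i+1}$ reaches degree $s+1$, which we obtain by pairing inside its own star in parallel; this is where the sizes must be tuned. If Breaker answers every threat, Maker reaches $x_{\ell-1}x_\ell$ and creates a double threat at $x_{\ell-1}$ and $x_\ell$. The larger end star at $x_\ell$ makes this double threat decisive.

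\emph{Breaker's strategy on $G_\ell-x_ix_{i+1}$}: the graph splits into two components, each a shorter path with stars whose end has lost a path edge. On each component Breaker uses a pairing strategy as in Claim~\ref{clm:stars.claim2}. He pairs edges inside each star and pairs one leftover star edge at $x_j$ with the path edge $x_jx_{j+1}$. This caps every vertex at Maker-degree at most $r$, or, near the cut, forbids two adjacent vertices with degrees $r+1$ and $s+1$.

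I expect the main obstacle to be choosing star sizes that make both directions work simultaneously. Unlike the star case, Maker needs two adjacent rich vertices, so Breaker's pairings must rule out adjacent pairs rather than single vertices. A single pendant star size per vertex may therefore not suffice, and I would be ready to give $x_1$ and $x_\ell$ extra structure. The first attempt would be a parity count of how many star edges Maker secures at $x_i$ after gaining tempo, compared with $\lfloor(\text{size})/2\rfloor$ under Breaker's pairing. If $r=s$, or the degenerate case $S_{1,1}=P_4$, resists this scheme, I would handle it separately with an ad hoc path-like construction using the same threat-chain idea.
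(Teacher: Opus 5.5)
Your proposal has a genuine gap. It never fixes a construction, and neither the provisional one nor the threat chain borrowed from Claim~\ref{clm:stars.claim1} works.

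\emph{The case $r=s\ge 2$.} Here Breaker already wins on $G_\ell$ itself, using exactly the pairing you intend for $G_\ell-e$: pair the pendant edges at each $x_j$, and pair the leftover one with $x_jx_{j+1}$ for all $j<\ell$. Then every $x_j$ with $j<\ell$ has Maker-degree at most $r$, and only $x_\ell$ can reach $r+1$. But $S_{r,r}$ needs two adjacent such vertices. For $r=s=1$ your interior size $2r-3$ is negative.

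\emph{The case $s=r-1$.} Here the chain breaks at its first step. A single new path edge is not a genuine threat, because $S_{r,s}$ also needs a second rich vertex next to $x_i$, and Breaker can attack that partner instead of $S_{x_i}$. Concretely:
\begin{itemize}
\item after Maker's $x_1x_2$, Breaker takes $x_2x_3$;
\item after Maker's tempo move in $S_{x_1}$, he plays in $S_{x_2}$ and pairs the rest.
\end{itemize}
Now $x_2$ ends with Maker-degree at most $1+(r-2)<s+1$. The rest of the board is the component of $G_\ell-x_2x_3$ containing $x_3$, which your own pairing shows Breaker wins.

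\emph{The general step.} Your ``parallel pairing'' at $x_{i-1}$ yields only $2+(r-2)=r$ Maker edges there. This is because Breaker's forced reply from the previous round already lies in $S_{x_{i-1}}$.

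\emph{The missing idea} is to decouple the stars from the path, as the paper does:
\begin{itemize}
\item Take $\ell$ even and attach $2r$ leaves at $x_1$ and $2s$ leaves at $x_\ell$. Attach $2r-2$ leaves at interior $x_i$ with $i$ odd, and $2s-2$ leaves with $i$ even.
\item Maker pairs inside all stars from the very start, so no tempo is needed there.
\item By the alternation, $S_{r,s}$ then appears as soon as Maker owns three consecutive path edges, or two consecutive path edges at an end of the path.
\item On the path, Maker claims $x_2x_3, x_4x_5,\dots$. Each of these forces Breaker's reply $x_1x_2, x_3x_4,\dots$, and finally $x_{\ell-2}x_{\ell-1}$ creates a double threat.
\item On $G_\ell-e$, Breaker pairs inside the stars and pairs $x_{2j-1}x_{2j}$ with $x_{2j}x_{2j+1}$. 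This forbids all such subpaths, which gives the minimality direction uniformly in $r$ and $s$, including $r=s$ and $P_4$.
\end{itemize}
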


\begin{proof}
Let $\ell \in \mathbb{N}$ be an even number. We again start the construction of a graph $G_\ell$ with a path $P_\ell$ on $\ell$ vertices $x_1, \ldots , x_\ell$. Then, we attach a star $S_{x_1}$ with $2r$ leaves at $x_1$ and a star $S_{x_\ell}$ with $2s$ leaves at $x_\ell$. Moreover, for all $2\leq i\leq \ell - 1$, we attach a star $S_{x_i}$ at $x_i$, which has $2s-2$ leaves if $i$ is even, and $2r - 2$ leaves if $i$ is odd. See also Figure~\ref{fig:double.star.construction} for this construction. Similarly to the previous proof, we intend to show that Maker wins the $S_{r,s}$-game on $G_\ell$, but loses on $G_\ell - e$ for every edge $e$ in the path $P_\ell$. 

\begin{center}
\begin{figure}[t] 
	\begin{center}
\includegraphics[page=2,scale=0.6]{MiWi-Graphs_for_stars.pdf}
	\end{center}
	\caption{Construction for the $S_{r,s}$-game.}
	\label{fig:double.star.construction}
\end{figure}
\end{center}

\begin{clm}
$G_\ell \MBarrow S_{r,s}$. 
\end{clm}

\begin{proof}
Outside of the path $P_\ell$ Maker can apply a pairing strategy to ensure to get half of the edges in every star $S_{x_i}$. Additionally we claim that, playing on $P_\ell$, Maker can get a path of length $3$, or a path of length $2$ starting in $x_1$ or in $x_\ell$. 

Assume first that Maker claims a path from $x_{j - 1}$ to $x_{j + 2}$ for some $3\leq j \leq \ell - 3$. Then, the edges of that path, together with Maker's edges of the stars $S_{x_j}$ and $S_{x_{j + 1}}$ form a copy of $S_{r,s}$. 
If otherwise Maker claims the path from $x_1$ to $x_3$, she creates a copy of $S_{r,s}$, together with her edges in the stars $S_{x_1}$ and $S_{x_2}$. Analogously, Maker gets a copy of $S_{r,s}$, if she claims the path from $x_{\ell-2}$ to $x_\ell$ instead.

Hence, it remains to show that playing on $P_\ell$ only, Maker can claim a subpath as described.
To do so, Maker starts her strategy on $P_\ell$ by claiming the edge $x_2 x_3$.
If the next time Breaker plays on the path he does not claim the edge $x_1 x_2$, Maker claims it and wins. Hence, Breaker next move on the path must be $x_1 x_2$. In this case Maker responds with the edge $x_4 x_5$ as her second edge of $P_\ell$, and analogously forces Breaker to claim $x_3 x_4$. In general, for every $i \in [\frac{\ell}{2} - 1]$ Maker can claim the edge $x_{2i}x_{2i+1}$ as her $i$-th edge of $P_\ell$ and force Breaker to respond with the edge $x_{2i - 1} x_{2i}$. Doing so, Maker eventually claims the edge $x_{\ell - 2} x_{\ell - 1}$ as her $\frac{\ell}{2} - 1$-th edge of $P_\ell$, and creates a double-threat: Maker can now claim a path as described by claiming either the edge $x_{\ell - 3}x_{\ell - 2}$ or the edge $x_{\ell - 1}x_{\ell}$. Breaker cannot block both edges with his next edge of $P_\ell$ and hence Maker wins.
\end{proof}

\begin{clm}
Let $e$ be an edge in $P_\ell$. Then $G_\ell - e\MBarrow S_{r,s}$ does not hold.
\end{clm}

\begin{proof}
Let $e = x_i x_{i + 1}$ with $i \in [\ell - 1]$. As in the proof of 
Claim~\ref{clm:stars.claim2},
it is enough to show that Breaker wins the $S_{r,s}$-game on the connected component of $G_\ell - e$ that contains $x_1$. To do so, Breaker applies the following pairing: He pairs all edges inside the added stars, getting half of the edges of each of the stars $S_1,\ldots,S_{x_i}$. Additionally, he pairs the edges $x_{2j-1}x_{2j}$ and $x_{2j} x_{2j+1}$ for every $j \in [\lceil\frac{i}{2}\rceil-1]$, 
and thus ensures that Maker cannot get a subpath of $P_\ell$ of length $3$, or a subpath of length $2$ starting in $x_1$. 
Together with Maker's stars attached to these Maker's subpaths of $P_\ell$,
the only double stars
that she can get are isomorphic to subgraphs of
$S_{r,s-1}$ or $S_{r-1,s}$,
and hence Maker loses the $S_{r,s}$-game.
\end{proof}

As in Lemma~\ref{lem:star}, the statement of Lemma~\ref{lem:double-star} now follows from the previous two claims.
\end{proof}

\medskip

\begin{lemma}\label{lem:triple-star}
    Let $r, s, t \in \mathbb{N}_0$ such that $r, t \geq 1$. Then $\mathcal{M}_{MB}(S_{r,s,t})$ consists of infinitely many graphs.
\end{lemma}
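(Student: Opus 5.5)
The plan is to follow the template of Lemmas~\ref{lem:star} and~\ref{lem:double-star}. For even $\ell$ we build $G_\ell$ from a path $P_\ell$ on $x_1,\ldots,x_\ell$ with a pendant star $S_{x_i}$ attached at every $x_i$. We then show $G_\ell\MBarrow S_{r,s,t}$ and $G_\ell-e\MBnotarrow S_{r,s,t}$ for every edge $e$ of $P_\ell$. These two facts give a minimal MB-winnable subgraph of $G_\ell$ that contains all of $P_\ell$. Letting $\ell\to\infty$ then yields infinitely many graphs in $\mathcal{M}_{MB}(S_{r,s,t})$.

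Write $a,b,c$ for the centres of $S_{r,s,t}$, where $b$ is the middle one, so $d(a)=r+1$, $d(b)=s+2$, $d(c)=t+1$. Outside $P_\ell$ both players pair inside each star $S_{x_i}$, so each of them obtains exactly half of every star. In Maker's graph, a vertex $x_i$ therefore has degree $\tfrac12 e(S_{x_i})$ plus the number of her path edges at $x_i$.

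\emph{Choice of star sizes.} The path game should work exactly as in Lemma~\ref{lem:double-star}. Maker plays $x_2x_3,x_4x_5,\ldots$, forcing Breaker onto $x_1x_2,x_3x_4,\ldots$, and ends with a double threat. This guarantees her either a run of three consecutive path edges $x_{j-1}x_jx_{j+1}x_{j+2}$, or a run of two path edges starting at $x_1$ or ending at $x_\ell$. Breaker, conversely, pairs $x_{2j-1}x_{2j}$ with $x_{2j}x_{2j+1}$ inside a component. Then Maker only gets runs of length at most two whose middle vertex has even index, and none starting at $x_1$.

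So the sizes must satisfy two conditions. First, a run of length three must contain a triple star: take $b=x_j$, $a=x_{j-1}$, $c=x_{j+1}$, with $x_{j+2}$ serving as a leaf of $c$, or the mirror image $a=x_{j+1}$, $c=x_{j-1}$. Second, a run of length two centred at an even index must not suffice. The natural attempt uses period-two sizes, like the $2r-2$/$2s-2$ alternation in the double-star case. Odd-indexed vertices (possible $a$ or $c$ positions) get stars of size about $2\max\{r,t\}-2$. Even-indexed vertices (possible $b$ positions) get stars of size $2s$. The end stars $S_{x_1},S_{x_\ell}$ are enlarged by two, so that a run of length two from an endpoint also wins.

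The main obstacle is exactly this calibration. Even-indexed $x_j$ are the middles of Breaker-allowed runs, and then $x_{j\pm1}$ receive only one path edge each, so Maker must fall short at $a$ or $c$. In the winning run of length three, however, $x_{j+1}$ has two path edges and must reach degree $t+1$ (or $r+1$). When $r\ne t$ a single period-two pattern may not fit both orientations. In that case I would try a period-four pattern of sizes, such as $2s,\,2r-2,\,2s,\,2t-2$, and adapt Maker's double-threat strategy so that the run she forces starts at an appropriate residue. If $s=0$, the middle vertex needs no star at all. Once the sizes are fixed, both claims reduce to the path game already analysed in Lemma~\ref{lem:double-star} plus straightforward degree counting.
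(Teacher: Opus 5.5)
\textbf{There is a genuine gap: the choice of star sizes is the whole content of this lemma, and you leave it open.} Your framework is sound: a path with pendant stars, pairing inside the stars, the threat chain $x_2x_3,x_4x_5,\dots$ from the double-star lemma, and Breaker's pairing on the components of $G_\ell-e$. But the sizes you do suggest fail.

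Take even stars $2s$ and odd stars $2\max\{r,t\}-2$, and let $r=t$, the case you regard as unproblematic.
In a forced run $x_{j-1}x_jx_{j+1}x_{j+2}$ with even centre $b=x_j$, the end $x_{j-1}$ has one path edge and Maker-degree $r$. So it can serve neither as $a$ nor as $c$.
An odd centre instead needs $r-1\ge s$, while its even end needs $s\ge r$.
The first threat $x_1x_2x_3$ fails at $x_3$ for the same reason.
The last threat $x_{\ell-2}x_{\ell-1}x_\ell$ has an odd centre and the even end $x_\ell$ with star $2s+2$, so it needs $\max\{r,t\}=s+1$ regardless of whether $r=t$.

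A concrete failure is $S_{1,0,1}=P_5$. Both your period-two pattern and the period-four pattern $2s,2r-2,2s,2t-2$ leave every inner star empty. So $G_\ell$ is a path with at most two pendant leaves at each end. Breaker wins on $G_\ell$ itself by pairing the end leaves and pairing $x_{2j-1}x_{2j}$ with $x_{2j}x_{2j+1}$; Maker's longest path then has at most three edges.

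In fact, within your scheme no period-two pattern can work when $r=s=t$. Making the forced length-three runs winning forces Maker's share of every inner star to be at least $r$. Then, for $e$ in the middle of the path, Maker wins on $G_\ell-e$ just by claiming two adjacent inner path edges.

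There is also a parity slip. With pairs $\{x_{2j-1}x_{2j},x_{2j}x_{2j+1}\}$, Maker has at most one path edge at each even vertex. So Breaker-allowed length-two runs have odd middle vertices, not even ones.

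\textbf{The missing idea} is how to satisfy two competing requirements. In a winning run of length three, the neighbour of $b$ with only one path edge must carry a large star. In every Breaker-allowed run of length two, some end must carry a small star.

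The paper achieves this, assuming $r\ge t$, with the period-four pattern big, big, small, small. Big vertices get stars of size $2\max\{r,s,t\}$ and small vertices get stars of size $2t-2$. Every inner path vertex then has exactly one small neighbour. Hence any length-two run fails at its small end, whose Maker-degree is at most $t$. In a run big, big, small, small (the paper's ``useful path''), the inner small vertex gets a second path edge and reaches degree $t+1$.

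The paper puts bigs at $i\equiv 1,2 \pmod 4$ and takes $\ell\equiv 2\pmod 4$. It handles the ends by attaching triangles at $x_1$ and $x_\ell$, on which Maker pairs the two triangle edges at the endpoint, so that her first and last threats are again useful paths of length three.

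Your pure-path version can be completed instead. Put bigs at $i\equiv 2,3\pmod 4$, small stars of size $2t-2$ at the other inner vertices, stars of size $2t$ at $x_1$ and $x_\ell$, and take $\ell\equiv 0\pmod 4$. Then reversing the labelling preserves the pattern, and the double-star path game and Breaker's pairing go through essentially unchanged. Without a calibration of this kind, checked at both ends, neither of your two claims is established.
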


\begin{proof}
Assume w.l.o.g.~that $r \geq t$ holds. Let 
$m = \max\{r,s,t\}$ and let $\ell \in \mathbb{N}$ such that $\ell \equiv 2$ ($\text{mod }4$) holds. We start the construction of $G_\ell$ with a path $P_\ell$ of length $\ell - 1$ and vertices $x_1,\ldots,x_\ell$. Then we add two triangles $\{x_1, x_0, x_{-1}\}$ and $\{x_\ell, x_{\ell + 1}, x_{\ell + 2}\}$ containing the two endpoints of our path. Finally we attach stars with centres $x_i$, for every $-1\leq i \leq \ell + 2$, such that $S_{x_i}$ has size $2m$ if $i \equiv 1$ or $i\equiv 2$ ($\text{mod }4$) holds, and size $2t - 2$ otherwise. See also Figure~\ref{fig:triple.star.construction}.
In the following, for short, we call $x_i$ a big vertex if $S_{x_i}$ has $2m$ edges, and small otherwise.
As in previous proofs, we intend to show 
that $G_\ell \MBarrow S_{r,s, t}$ holds and $G_\ell - e \MBarrow S_{r,s, t}$ does not hold for any edge $e$ in the path $P_\ell$.

\begin{center}
\begin{figure}[t] 
	\begin{center}
\includegraphics[page=3,scale=0.6]{MiWi-Graphs_for_stars.pdf}
	\end{center}
	\caption{Construction for the $S_{r,s,t}$-game.}
	\label{fig:triple.star.construction}
\end{figure}
\end{center}

\begin{clm}
$G_\ell \MBarrow S_{r,s,t}$.     
\end{clm}

\begin{proof}
Maker applies a pairing strategy in all the attached stars, meaning that she gets $m$ edges in the stars of size $2m$ and $t-1$ edges in the stars of size $2t - 2$. We call a path of length $3$ in the vertex set $\{x_i: -1\leq i \leq \ell+2\}$ useful, if in the path two adjacent vertices are big and two adjacent vertices are small. Assume that Maker can claim a useful path, say with vertices $x_i,x_{i+1},x_{i+2},x_{i+3}$
such that w.l.o.g.~$x_i$ is big
and $x_{i+3}$ is small,
then the edges of that path together with Maker's stars at 
$x_i,x_{i+1},x_{i+2}$ form a copy of $S_{m,m,t}$ which contains a copy of $S_{r,s,t}$. Hence, it remains to show that Maker can claim such a useful path in a game played only on the graph induced by $\{x_i: -1\leq i \leq \ell+2\}$.

In order to do so, Maker pairs the edge $x_{-1}x_1$ with $x_0x_1$, and
the edge $x_\ell x_{\ell+1}$ with $x_\ell x_{\ell+2}$.
Apart from those pairings, Maker plays the following strategy.
On her first turn she claims the edge $x_1 x_2$. Suppose Breakers first move that is not part of any of the described pairings is not $x_{-1} x_0$. Then Maker claims that edge and gets a useful path by using the edge she gets from the pair $(x_{-1}x_1,x_0x_1)$. 
Hence, Breaker is forced to claim
$x_{-1} x_0$ first (except from claiming edges in the pairings),
and we may assume from now on that this is Breaker's first edge.
Maker responds with the edge $x_3 x_4$, forcing Breaker to claim the edge $x_2 x_3$ to prevent Maker from getting a useful path with endpoints $x_1$ and $x_4$. Maker continues with this strategy, meaning that she claims the edge $x_{2i - 1}x_{2i}$ in round $i$, forcing Breaker to respond with the edge $x_{2i - 2} x_{2i - 1}$, for every $i\in [\ell/2]$.
Finally, Maker can claim
$x_{\ell + 1}x_{\ell + 2}$ and obtains a useful path
by using the edge she gets from the pair $(x_{\ell}x_{\ell +1},x_\ell x_{\ell +2})$. 
\end{proof}

\begin{clm}
Let $e$ be an edge in $P_\ell$. Then $G_\ell - e\MBarrow S_{r,s,t}$ does not hold.
\end{clm}

\begin{proof}
Again due to symmetry, 
it is enough to show that Breaker wins the $S_{r,s,t}$-game on the component of $G_\ell - e$ which contains $x_1$. 
Let $i \in \mathbb{N}_0 \cup \{-1\}$ be the maximum number, such that the vertex $x_{2i + 2}$ is in that component. Breaker wins the game by a pairing strategy. He pairs the edges inside of the stars to get half of the edges in each star. Additionally, he considers the pairs $(x_{- 1}x_1, x_0x_1)$, $(x_{- 1}x_0, x_1x_2)$ if $i \geq 0$ and 
$(x_{2j}x_{2j+1}, x_{2j + 1} x_{2j + 2})$ for all $j \in [i]$ if $i \geq 1$. Depending on $e$, there might be an unpaired edge left.

Following that strategy, Breaker makes sure, that Maker cannot get more than two adjacent edges on the path $P_\ell$, 
but Maker may get a path of length $3$ from the vertex $x_{-1}$ or $x_0$ to the vertex $x_3$. 
Together with Maker's stars attached to these paths,
the only triple stars
that she can get are isomorphic to subgraphs of
$S_{m+1,m,t-1}$ or $S_{m,t-1,t-1}$,
but none of these can contain a copy of $S_{r,s,t}$, since $r\geq t$.
\end{proof}

Analogously to Lemma~\ref{lem:star}, the statement of Lemma~\ref{lem:triple-star} now follows from the previous two claims.
\end{proof}

Now, we are ready to prove the existence of infinitely many minimal winnable graphs for all remaining trees.

\begin{lemma}\label{lem:trees}
    Let $T$ be a tree with at least three edges. Then $\mathcal{M}_{MB}(T)$ consists of infinitely many graphs.
\end{lemma}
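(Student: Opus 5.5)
Since the three lemmas above settle the stars $S_r$, the double stars $S_{r,s}$ and the triple stars $S_{r,s,t}$, I would argue by induction on $v(T)$ and reduce every other tree to a smaller one. The reduction uses leaf-deletion. Let $T$ be a tree with at least three edges which is not one of these star-like graphs. I would first show that $T$ has a leaf $u$ such that $T' := T-u$ is again not a star on at most two edges. Then either $T'$ is covered by Lemmas~\ref{lem:star}--\ref{lem:triple-star}, or it is handled by the induction hypothesis, so $\mathcal{M}_{MB}(T')$ is infinite. The real difficulty is transferring infinitely many minimal winnable graphs from $T'$ to $T$. Minimality is not monotone, so I would not try to extend a given minimal graph for $T'$ directly. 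Instead I would copy the scheme of the base cases and build, for every $\ell$, a graph $G_\ell$ with two properties: $G_\ell \MBarrow T$, and $G_\ell - e \MBnotarrow T$ for every edge $e$ of a designated path of length $\ell-1$. Then $G_\ell$ contains a minimal MB-winnable subgraph that uses the whole path, and hence has at least $\ell$ vertices.

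For the construction I would use the same path-with-gadgets template as in the base cases. Take the path $x_1,\dots,x_\ell$ and at each $x_i$ attach a gadget. Each gadget is a disjoint board on which Maker, by pairing, secures exactly the part of $T$ that hangs off a prescribed vertex or edge, and on which Breaker, by the complementary pairing, holds Maker to one leaf less. Concretely, I would pick a longest path $v_0v_1\dots v_d$ in $T$. Since $T$ is not star-like, $d\ge 4$ or the branching is rich enough, and $T$ decomposes along this spine into pendant subtrees. The gadget at $x_i$ is then a doubled copy of the pendant structure (doubling every leaf so that pairing gives Maker exactly one copy). The periodic pattern is chosen so that Maker wins exactly when she claims some consecutive window of the spine of a fixed length $k$. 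The endpoints get an extra leaf or a triangle, as in Lemma~\ref{lem:triple-star}, so that the end windows are shorter.

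Maker's strategy on the path is the forcing chain already used in Lemmas~\ref{lem:star}--\ref{lem:triple-star}. She claims every second edge, each move threatens to complete a window and forces Breaker's reply, and at the far end she creates a double threat. For $G_\ell - e$ I would treat each component separately, as in Claim~\ref{clm:stars.claim2}. Breaker pairs consecutive path edges $(x_{2j-1}x_{2j},x_{2j}x_{2j+1})$ and pairs all gadget edges. This leaves Maker only windows that are too short, and together with her gadget parts these give only trees with one leaf too few, so no copy of $T$ appears.

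I expect the main obstacle to be this last verification: showing that Maker cannot find a copy of $T$ embedded in some unintended way, for example using gadget edges as spine edges or folding $T$ differently. To control this I would choose the gadgets as stars of carefully chosen sizes, so that the extremal counting (degree sequences along any path Maker owns) rules out every embedding. If the general case turns out too messy, my fallback is to handle trees by their diameter, with the stars being diameter $2$, the double stars diameter $3$ and the triple stars the hard diameter-$4$ case. For diameter at least $4$ I would use the central edge or vertex and caterpillar-type gadgets, giving the big/small alternation a period matching the spine length.
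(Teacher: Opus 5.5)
\textbf{There is a genuine gap.} Your induction is never used. After obtaining that $\mathcal{M}_{MB}(T-u)$ is infinite, you set it aside and try to build a direct path-with-gadgets graph $G_\ell$ for every tree. Everything therefore rests on that construction, and as described it fails for trees with a long spine.

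The forcing chain only creates threats when a window has at most three path edges. If Maker holds $x_2x_3$ and $x_4x_5$, the gap edge $x_3x_4$ completes a run of only three edges, so for $k\ge 4$ Breaker is never forced to reply. Worse, the pairing $(x_{2j-1}x_{2j},x_{2j}x_{2j+1})$ you give Breaker on $G_\ell-e$ can be played on the path of $G_\ell$ itself. It holds Maker to runs of at most two consecutive path edges, so no interior window of $k\ge 4$ spine edges can ever be completed. Consequently the spine part of any such scheme has at most three edges, exactly as in Lemmas~\ref{lem:double-star} and~\ref{lem:triple-star}, and all remaining depth of $T$ must sit inside the gadgets.

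Those gadgets then cannot be stars. Also, ``doubling every leaf'' leaves the internal edges of a deep pendant subtree unprotected. Most importantly, the decisive verification is left open: that in $G_\ell-e$ Maker's gadget pieces plus runs of at most two path edges contain no copy of $T$, including unintended embeddings. So beyond the three star-like base cases the proposal is not a proof.

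The missing idea is to let the induction do the work by deleting \emph{all} leaves at once rather than one.
\begin{enumerate}
\item Let $T'$ be $T$ with every leaf removed; the star-like lemmas cover precisely the trees with $e(T')\le 2$.
\item Take $G\in\mathcal{M}_{MB}(T')$ with many vertices. Let $K$ be $G$ with $2m$ pendant edges attached at every vertex, where $m$ is the largest number of leaves at a vertex of $T$.
\item Maker wins on $K$: she plays her $T'$-strategy on $G$ and a pairing strategy on the pendant edges.
\item For $e\in E(G)$, Breaker wins on $K-e$ just by blocking $T'$ on $G-e$. Pendant vertices have degree $1$, so every non-leaf vertex of a copy of $T$ in Maker's graph lies in $V(G)$. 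All edges between such vertices lie in $G-e$, which would give Maker a copy of $T'$ there.
\end{enumerate}

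The non-monotonicity of minimality that worried you is then harmless. Choose $H$ inclusion-minimal with $G\subseteq H\subseteq K$ and $H$ winnable. Its pendant edges are essential by this choice, and its $G$-edges are essential by step~4. Hence $H\in\mathcal{M}_{MB}(T)$ and $v(H)\ge v(G)$.

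With single-leaf deletion this argument breaks down. A copy of $T$ may use several pendant edges as leaves, and then $T-u$ need not lie inside $G-e$.
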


\begin{proof}
Throughout this proof, for any graph $F$ let $F'$ be the graph obtained by removing every leaf of $F$. In the same way $F'', F''', \dots$ are obtained by removing the leaves of the preceding graph.
We prove the statement by induction on the number of these steps needed to reduce the tree $T$ to a tree on at most two edges.
Note that Lemmas \ref{lem:star}, \ref{lem:double-star} and \ref{lem:triple-star} cover all trees $T$ for which $e(T')\leq 3$, i.e.~applying only one steps of removing all leaves is sufficient. 

Hence we may assume that both $T$ and $T'$ have at least $3$ edges and,
by induction hypothesis, that $\mathcal{M}_{MB}(T')$ is infinite. To show that $\mathcal{M}_{MB}(T)$ is infinite as well, we intend to prove that for any $t\in\mathbb{N}$ there is $H\in \mathcal{M}_{MB}(T)$ with $v(H)\geq t$.
By assumption there is a graph $G$ that is MB-minimal for $T'$ and satisfies $v(G)\geq t$.
Now let $K$ be the graph obtained by attaching $2m$ leaves to every vertex of $G$, where $m$ is the highest number of leaves adjacent to a vertex in $T$. In the remainder of the proof we refer to the edges added by this step as \emph{leaf edges}.

Maker can win the $T$-game on $K$ as follows. She considers the edges of $G$ and the set of leaf edges as sub-boards. Maker plays her first move on $G$, then she replies to each move of Breaker on the same board he played on in his previous move.
On $G$ she plays according to a winning strategy for the $T'$-game. On the leaf edges she plays a pairing strategy, ensuring she gets at least $m$ of the $2m$ leaf edges on every vertex of $G$. At the end of the game Maker's graph contains a copy of the tree obtained by attaching $m$ leaves at every vertex of $T'$. By our choice of $m$ that tree contains $T$.

Next, consider all graphs $H$ such that $G\subseteq H\subseteq K$ and Maker wins the $T$-game on $H$. Among those select an inclusion minimal $H$, i.e.~a graph $H$ for which either we have $H=G$ or deleting any leaf edge from $H$ results in a graph on which Breaker wins the $T$-game.

We now show that $H\in\mathcal{M}_{MB}(T)$. By construction, Maker wins the $T$-game on $H$ but loses it if any more leaf edges are removed.
It remains to show that deleting any edge stemming from $G$ also leads to a graph on which Breaker can win the $T$-game.
Let $e$ be such an edge and consider the $T$-game on $H-e$.
Since $G\in\mathcal{M}_{MB}(T')$,
Breaker can play in such a way that
Maker does not create a copy of $T'$ in $G-e$. But then, let $F$ be Maker's graph at the end of the game and consider $F'$.
Clearly $F'$ cannot contain any of the leaf edges, so $F'\subseteq G-e$.
Moreover, if Maker would claim a copy of $T$, then she would have a copy of $T'$ contained in $F'\subseteq G-e$, but this is what Breaker prevents by his strategy.

Therefore we have $H\in \mathcal{M}_{MB}(T)$. Moreover, we have $t\leq v(G)\leq v(H)$, and so $|\mathcal{M}_{MB}(T)|$ must be infinite.
\end{proof}

Now we extend the results of the previous Lemma to all forests:

\begin{lemma}\label{forests}
    Let $F$ be a forest such that at least one component has at least three edges. Then $\mathcal{M}_{MB}(F)$ consists of infinitely many graphs.
\end{lemma}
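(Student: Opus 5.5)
We reduce to Lemma~\ref{lem:trees}, in the same spirit as the proofs of Lemma~\ref{lem:MBBM-Lemma} and Lemma~\ref{lem:trees}. Among the components of $F$ with at least three edges, choose a tree $T$ with the maximum number of edges. Write $F = kT + F'$, where $k\ge 1$ and no component of $F'$ is isomorphic to $T$. Every component of $F'$ is a tree that either has fewer edges than $T$, or has the same number of edges but is not isomorphic to $T$. In both cases it contains no copy of $T$. Hence every copy of $T$ in a Maker graph that contains $F$ must be one of the $k$ copies used for $T$, or extra. Controlling these extra copies is where the structure of the construction matters.

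\textbf{Construction.} Fix $n$. By Lemma~\ref{lem:trees}, pick $G\in\mathcal{M}_{MB}(T)$ with $v(G)\ge n$. Let $K$ be a fixed graph (independent of $n$) on which Maker, even as second player, wins the $F'$-game. For instance, $K$ could be a disjoint union of sufficiently large complete graphs, using Proposition~\ref{prop:stealing} and the first-move argument from Lemma~\ref{lem:MBBM-Lemma}. Set $\Gamma := (2k)G + K$.

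\textbf{Maker wins on $\Gamma$.} Maker treats each copy of $G$ and $K$ as separate boards and always answers on the board where Breaker just played. Whenever Breaker plays on a board Maker has not yet entered, she instead opens a fresh copy of $G$ as first player there. Since there are $2k$ copies, Maker obtains at least $k$ copies of $G$ on which she moves first, and on these she builds $T$. On $K$ she wins $F'$ as second player. This gives a copy of $kT+F'\supseteq F$.

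\textbf{Minimality.} As in Lemma~\ref{lem:MBBM-Lemma}, among subgraphs $H\subseteq \Gamma$ that contain at least one full copy of $G$ and still satisfy $H\MBarrow F$, take an inclusion-minimal one; then $v(H)\ge n$. For an edge $e$ in a retained copy of $G$, we must show $H-e\MBnotarrow F$. Breaker plays minimal-Breaker strategies board by board. On $G-e$ he prevents $T$ entirely, using that $G\in\mathcal{M}_{MB}(T)$ and that extra Breaker moves never hurt. On the other boards he exploits minimality, namely that removing the whole $G$-board (or the appropriate edges) already makes the remainder non-winnable.

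\textbf{Main obstacle.} It is exactly this step: Maker's copy of $F$ need not respect the board decomposition. In particular, copies of $T$, or components of $F'$, could be assembled inside $K$ or across boards. My first attempt would be to choose $K$ to contain no copy of $T$, e.g.\ a bounded-degree forest with many leaves, in which Maker wins $F'$ by pairing as in Lemma~\ref{lem:trees}. Then all copies of $T$ must come from the $G$-boards, and a counting argument over boards shows that deleting $e$ leaves Maker one copy of $T$ short. If $T$-freeness of $K$ cannot be arranged, because components of $F'$ force $T$-subgraphs, I would instead run the leaf-attachment induction of Lemma~\ref{lem:trees} directly on the whole forest.
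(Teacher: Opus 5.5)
There is a genuine gap, and it is exactly the step you flag as the main obstacle. Your argument needs the following: for every set $S$ containing one edge from each copy of $G$, Breaker wins the $F$-game on $\Gamma-S$. Two parts of this are fine. Since $T$ is connected and the boards are the components of $\Gamma$, copies of $T$ cannot be assembled across boards. On each board $G-e$, Breaker blocks $T$ by minimality of $G$. So everything reduces to Breaker winning the $T$-game on $K$, and you never establish this.

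Your proposed remedy, a $T$-free $K$ that is winnable for $F'$, can be impossible. Take $F=P_4+S_3$. Both components have three edges, so $T=S_3$ is a legitimate maximum-size choice, and then $F'=P_4$. An $S_3$-free graph has maximum degree at most $2$, so it is a union of paths and cycles. On such a graph Breaker wins the $P_4$-game even as second player: he pairs consecutive edges, and on an odd cycle he first answers Maker's edge with an adjacent one and then pairs along the remaining path. So no admissible $K$ exists.

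The weaker and sufficient requirement, that Breaker wins the $T$-game on $K$, has a similar problem. It needs, for each component $T'$ of $F'$, a graph winnable for $T'$ but not for $T$. Choosing $T$ by edge count gives you no control over that. Your fallback, running the leaf-pruning induction on the whole forest, is not a plan either. Pruning a forest produces components with at most two edges, and the base case becomes a forest of stars, double stars and triple stars, which is the same combination problem again.

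The missing idea is to choose the distinguished tree adaptively rather than by size. The paper proceeds as follows:
\begin{enumerate}
\item Pick any component $T_1$ with at least three edges and fix a large $G_1\in\mathcal{M}_{MB}(T_1)$.
\item Discard \emph{all} components $T$ with $G_1\MBarrow T$; call their number $x_1$.
\item Repeat on the remaining components until only $r$ components with at most two edges are left.
\end{enumerate}
By construction, the last chosen tree $T_s$ satisfies $G_i\MBnotarrow T_s$ for every $i<s$. Also $P_7\MBnotarrow T_s$, since $e(T_s)\ge 3$.

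Now consider $2x_1G_1+\cdots+2x_sG_s+rP_7$. Maker wins on it by pairing the copies of each $G_i$, so that she moves first on $x_i$ of them, and by using $P_7\BMarrow P_3$ for the small components. After deleting one edge from each copy of $G_s$, Breaker wins the $T_s$-game on every component separately. Hence every minimal winnable subgraph contains a whole copy of $G_s$.

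Your board decomposition and pairing of copies match the paper. What you lack is this selection of $T_s$. It makes the auxiliary boards non-winnable for $T_s$ by construction, with no need for $T_s$-freeness or any knowledge of which trees' winnable graphs avoid which others.
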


\begin{proof}
Let $t\in \mathbb{N}$ be arbitrary. In order to show that
$\mathcal{M}_{MB}(F)$ consists of infinitely many graphs, we show that
there is minimal MB-winnable graph for $F$ 
which has at least $t$ vertices. In order to do so, we first construct a (not necessarily minimal) MB-winnable graph for $F$ as follows.

Let $\mathcal{T}_1$ be the family of tree components of $F$ (with multiplicity).
Then as long as $\mathcal{T}_i$ contains a tree with at least three edges, do the following iteratively:
let $T_i\in \mathcal{T}_i$ be an arbitrary tree with $e(T_i)\geq 3$,
and fix an arbitrary graph $G_i\in \mathcal{M}_{MB}(T_i)$ such that $v(G_i)\geq t$.
Note that $G_i$ exists by Lemma~\ref{lem:trees}.
Afterwards, set $\mathcal{T}_i^\ast := \{T\in \mathcal{T}_i: G_i\MBarrow T\}$, and 
	$x_i := |\mathcal{T}_i^\ast|$, and $\mathcal{T}_{i+1} := \mathcal{T}_i\setminus \mathcal{T}_i^\ast$.
Assume the iteration takes $s$ rounds, so that $\mathcal{T}_{s+1}$ contains only
trees with at most two edges. Then set $r:=|\mathcal{T}_{s+1}|$ and consider the graph
$G := 2x_1 G_1 + 2 x_2 G_2 + \ldots + 2 x_s G_s + r P_7$. The following two claims hold.

\begin{clm} \label{clm:forests:F}
$G \BMarrow F$
\end{clm}

\begin{proof}
For each $i\in [s]$, Maker distributes the copies of $G_i$ into $x_i$ pairs.
Then Maker considers each of these pairs and the subgraph $r P_7$ as a separate subboard.
That is, when Breaker makes a move on one of these subboards, Maker also claims an edge of the same board.

For $i\in [s]$, consider a pair of $G_i$-copies. When Breaker claims a first edge in one of these copies, Maker can claim a first edge in the other copy, hence pretending to be the first player on this copy. 
Hence, on half of the copies of $G_i$ Maker can ensure to play as first player,
and since $G_i \MBarrow T$ holds for every $T\in \mathcal{T}_i^\ast$, Maker can thus claim 
a copy of $T$ for every $T\in \mathcal{T}_i^\ast$. 
The remaining $r$ components of $F$, i.e.~those in $\mathcal{T}_{s+1}$,
have at most two edges each. A simple case distinction shows that $P_7 \BMarrow P_3$ holds,
and hence Maker can use $r P_7$ in order to finish a copy of $F$.
\end{proof}

\begin{clm} \label{clm:forests:notF}
Let $S$ contain an edge of each of the $2x_s$ copies of $G_s$. Then $G - S \MBarrow F$ does not hold. 
\end{clm}

\begin{proof}
By definition we have
$T_s\in \mathcal{T}_s$ and hence 
$T_s \notin \bigcup_{i < s} \mathcal{T}_i$. 
In particular, we get $G_i \MBnotarrow T_s$ for every $i<s$.
Moreover, since $G_s\in \mathcal{M}_{MB}(T_s)$,
we know that $G_s - e \MBnotarrow T_s$ for every $e\in E(G_s)$.
Additionally, a simple case distinction shows that $P_7 \MBarrow T$ does not hold
for trees $T$ with at least $3$ edges, and hence 
$P_7 \MBarrow T_s$ does not hold.
Thus, on each of the components of $G - S$, Breaker has a strategy to block
all copies of $T_s$. Breaker simply applies these strategies in parallel by always playing on the component that Maker played on in her previous move. This way, Maker cannot get a copy of $T_s$ and hence loses the $F$-game on $G-S$ as first player.
\end{proof}

Using the above two claims we can finish the main proof.
Because of Claim~\ref{clm:forests:F} we have $G \MBarrow F$ and, in particular,
$G$ contains a graph $G'\in \mathcal{M}_{MB}(F)$.
Moreover, by Claim~\ref{clm:forests:notF}, the graph $G'$ needs to contain
a copy of $G_s$, and thus $v(G') \geq v(G_s)\geq  t$.
\end{proof}

\medskip

It remains to consider all graphs that are not a forest.

\begin{lemma}\label{cycles}
    Let $H$ be a graph containing a cycle. Then $\mathcal{M}_{MB}(H)$ is infinite.
\end{lemma}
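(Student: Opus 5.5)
We need infinitely many minimal MB-winnable graphs for $H$ containing a cycle. The plan is to produce, for every $t$, an MB-winnable graph $G$ for $H$ such that every MB-winnable subgraph of $G$ has at least $t$ vertices; any minimal MB-winnable subgraph $G'\subseteq G$ (which exists, since $G\MBarrow H$) then lies in $\mathcal{M}_{MB}(H)$ and has $v(G')\geq t$. The natural tool is high girth: if every MB-winnable subgraph of $G$ must contain a copy of $H$, and $H$ contains a cycle of length at most $g(H)$, then the size bound should come from girth considerations applied to a suitably modified graph. Since a winnable graph always contains $H$ itself, that alone does not force $v(G')\geq t$. I will instead aim for a graph in which Maker cannot win on any small subgraph because the board is too sparse locally.

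Concretely, I would use the Weak Win Criterion (Theorem~\ref{thm:Weak.Win}) together with a random-like construction. Consider a hypergraph $\mathcal{F}$ whose vertices are edges of $G$ and whose hyperedges are copies of $H$ in $G$. I want $G$ with many copies of $H$, each edge in few copies, and any two edges sharing few copies, so that $\sum_{F}2^{-e(H)} > \tfrac18\Delta_2(\mathcal{F})e(G)$, giving $G\MBarrow H$. At the same time I want every subgraph on fewer than $t$ vertices to be ``sparse'' in copies of $H$, for instance a disjoint union of copies of $H$ glued along a forest structure (copies of $H$ pairwise sharing at most one vertex with a tree-like intersection pattern). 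On such a small subgraph Breaker wins by a pairing strategy: in each copy of $H$ he pairs two edges of a cycle that are not shared with other copies. Because the copies meet in a tree-like way, these pairs can be chosen disjoint. A natural candidate is a Nešetřil–Rödl-type partite construction, or a random $e(H)$-uniform hypergraph of high girth and large minimum degree with copies of $H$ placed on the hyperedges (an amalgamation as in Theorem~\ref{thm:Ramsey:clique.number}). Its high girth (at least $t$) guarantees that every subfamily of fewer than $t$ copies forms a hyperforest.

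Steps, in order. First, take a $k$-uniform hypergraph $\mathcal{A}$ with $k=v(H)$, girth larger than $t$, and every vertex in at least $D$ hyperedges with $D$ huge; such hypergraphs exist by Erdős–Hajnal or a probabilistic deletion argument. Second, on each hyperedge place a copy of $H$ (fixing a bijection) so that distinct copies share at most one vertex and therefore no edges; define $G$ as the union. Third, show $G\MBarrow H$. Here the Weak Win Criterion needs care, since $\Delta_2$ counts copies of $H$ containing two given edges, including accidental copies using edges from several placed copies. High girth forces any copy of $H$ in $G$ whose cycle lies in few placed copies to lie inside a single placed copy, and bounding the rest is the delicate part. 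A cleaner alternative for this step is a direct Maker strategy exploiting the large $D$, similar to the inductive leaf-attachment in Lemma~\ref{lem:trees}. Fourth, show that any subgraph $G'$ with $v(G')<t$ is not MB-winnable: every copy of $H$ in $G'$ has its cycle inside a single placed copy (by girth), and the placed copies meeting $G'$ form a linear hyperforest, so Breaker can pair two cycle edges inside each placed copy that contains a cycle, disjointly across copies.

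The main obstacle is Step~3: proving Maker wins on a high-girth amalgamation. The Weak Win Criterion with $\Delta_2$ bounded by a constant (placed copies are edge-disjoint, so $\Delta_2\le$ the number of copies of $H$ in a single $K_k$ once accidental copies are excluded) requires the number of copies to exceed roughly $2^{e(H)}$ times the number of edges, which fails since each edge lies in one copy. I would therefore replace Step~2 by placing a complete graph $K_N$-like dense block Ramsey-style structure instead: use Theorem~\ref{thm:Ramsey:clique.number}'s construction (Nešetřil–Rödl give Ramsey graphs of large girth with respect to copies, i.e.\ the copies of $H$ form a hypergraph of high girth), combined with Proposition~\ref{prop:stealing} to get $G\MBarrow H$ for free. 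Then only Step~4 remains, which follows from the high girth of the copy-hypergraph as described.
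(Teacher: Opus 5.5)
Your final plan (a $2$-Ramsey graph whose copy-hypergraph is locally acyclic, Proposition~\ref{prop:stealing} for Maker, and a pairing strategy for Breaker on small subgraphs) has the same architecture as the paper's proof. However, the existence statement it rests on is false for general $H$. For some $H$ with a cycle, no $2$-Ramsey graph has the property that the copies of $H$ itself form a hypergraph of large girth; not even one avoiding $2$-cycles exists.

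Take $H=K_3\cdot K_2$ and suppose no two copies of $H$ in $G$ share two edges. Two copies with the same triangle share three edges. Hence every triangle $abc$ of $G$ satisfies $(d(a)-2)+(d(b)-2)+(d(c)-2)\le 1$. So every triangle lying in a copy of $H$ has two vertices of degree $2$, and these triangles are pairwise edge-disjoint. Colouring each of them with both colours gives a $2$-colouring of $G$ with no monochromatic $H$. The same argument works for the $2$-connected diamond ($K_4$ minus an edge). Moreover, Theorem~\ref{thm:Ramsey:clique.number} only controls $\omega(G)$ and says nothing about how copies of $H$ overlap, so it cannot supply such a graph in any case.

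The missing idea is to block a suitable subgraph rather than $H$ itself. Let $H'\subseteq H$ be minimal with $\frac{e(H')-1}{v(H')-2}=m_2(H)$. It contains a cycle, so $e(H')\ge 3$, and blocking all copies of $H'$ blocks $H$. The paper takes the R\"odl--Ruci\'nski Ramsey graph $F'$ in which every subgraph $G$ with $v(G)\le t$ has $m_2(G)\le m_2(H)$. It then shows by a density count that inside such $G$ the copies of $H'$ form a Berge-acyclic hypergraph. The case of two copies sharing at least two edges is exactly where minimality of $H'$ is used, and it is what fails for $H$ itself in the example above. Only after this reduction does your pairing step go through: order the copies so each meets the earlier ones in at most one edge, and pair two new edges.

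Your claim is fine when $H$ is itself strictly $2$-balanced, but passing to such a subgraph is precisely the step you are missing. A smaller slip: girth larger than $t$ does not make the copies inside a subgraph on fewer than $t$ vertices a hyperforest, since such a subgraph can contain far more than $t$ copies. Girth larger than $\binom{t}{2}$ would be needed.
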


\begin{proof}
Let $t\geq 1$. The proof uses a result by Rödl and Ruci\'{n}ski~\cite{rodl1995threshold} which we restate in our notation for convenience.
Here $G(n,m)$ is the Erd\H{o}s-R\'{e}nyi random graph, obtained by selecting a graph from all graphs with $n$ vertices and $m$ edges uniformly at random. 
\begin{prop}[Corollary $4$(c) in ~\cite{rodl1995threshold}] \label{prop:rodl1995threshold}
    Let $r\geq 2$ and let $H$ be a graph with a cycle. Then there exist a constant $C$ such that for $m=Cn^{2-1/m_2(G)}$ and any $t\geq 0$ the random graph $G(n,m)$ contains a subgraph $F'$ that satisfies the following with probability tending to $1$ as $n$ tends to infinity:
    \begin{itemize}
        \item Every subgraph $G\subseteq F'$ with $v(G)\leq t$ satisfies $m_2(G)\leq m_2(H)$
        \item $F'\rightarrow_r H$
    \end{itemize}  
\end{prop}

For this proof it suffices that for our graph $H$, $r=2$ and any $t\geq 1$ there exists at least one graph $F'$ satisfying both conditions of Proposition~\ref{prop:rodl1995threshold}.
By Proposition~\ref{prop:stealing} it follows that $F' \MBarrow H$.
We show that on every subgraph $G\subseteq F'$ with $v(G)\leq t$,
Breaker wins the $H$-game. It then follows that every minimal winnable graph for $H$
contained in $F'$ needs to have more than $t$ vertices, and hence $|\mathcal{M}_{MB}(H)|=\infty$, as $t$ is arbitrary.

From now on let $G\subseteq F'$ with $v(G)\leq t$, and note that $m_2(G) \leq m_2(H)$.
Let $H'\subseteq H$ be a minimal subgraph such that $m_2(H) = \frac{e(H')-1}{v(H')-2}$,
and note that $e(H')\geq 3$, because $m_2(H)>1$ and hence $H'$ needs to contain a cycle. Then
consider the hypergraph $\mathcal{G}$ whose vertices are the edges of $G$,
and where each copy of $H'$ in $G$ forms a hyperedge. We define a cycle in a hypergraph to be a cyclic sequence of vertices $(v_0,v_1,\ldots,v_{\ell-1})$ with $\ell \geq 2$ such that any two consecutive vertices
share a distinct hyperedge.
(Note that in case $\ell=2$ this requires two hyperedges
that contain both $v_0$ and $v_1$.) Then the following claim holds:

\begin{clm}\label{clm:no.Hcycle}
$\mathcal{G}$ is acyclic.
\end{clm}

In fact, the above claim is mentioned within the text of~\cite{rodl1995threshold}, yet no proof is given there. 
For completeness we provide an argument here. 

\begin{proof}[Proof of Claim~\ref{clm:no.Hcycle}]
For contradiction, assume that the claim is wrong. Let
$(v_0,\ldots,v_{\ell-1})$ be a shortest cycle in $\mathcal{G}$,
and let
$e_0,\ldots,e_{\ell-1}$ be distinct 
hyperedges such that
$v_i,v_{i+1}\in e_i$ for every $0\leq i\leq \ell -1$ (indices taken modulo $\ell$).
Furthermore, let $H'_i$ be the copy of $H'$ represented by $e_i$ for every $0\leq i \leq \ell -1$, and
let $G'=H'_0\cup H'_1\cup \ldots \cup H'_{\ell -1}$.

Assume first that $\ell \geq 3$. Then, by minimality of the cycle, we have $|E(H'_i) \cap E(H'_j)| = 1$ and $|V(H'_i) \cap V(H'_j)| \geq 2$ if  $e_i$ and $e_j$ are consecutive hyperedges of the cycle, and $|E(H'_i) \cap E(H'_j)| = 0$ and $|V(H'_i) \cap V(H'_j)| \geq 0$ otherwise. It follows that
\begin{align*}
m_2(G') 
\geq \frac{e(G') -1}{v(G')-2}
\geq \frac{\ell e(H') - \ell - 1}{\ell v(H') - 2\ell - 2} 
> \frac{e(H') - 1}{v(H') - 2} = m_2(H),
\end{align*}
and hence $m_2(G) > m_2(H)$,
a contradiction.

Assume then that $\ell = 2$. Let $\tilde{H} = H'_0 \cap H'_1$. Then
\begin{align*}
m_2(G') 
\geq \frac{e(G') -1}{v(G')-2}
= \frac{2e(H') - e(\tilde{H}) - 1}{2v(H') - v(\tilde{H}) - 2} 
> \frac{e(H') - 1}{v(H') - 2}
\end{align*}
since the last inequality is equivalent to $\frac{e(H')-1}{v(H')-2} > \frac{e(\tilde{H})-1}{v(\tilde{H})-2}$,
which holds by the minimality of $H'$. Again we get $m_2(G) > m_2(H)$, a contradiction.
\end{proof}

Now, from Claim~\ref{clm:no.Hcycle} it immediately follows that all copies of $H'$ in $G$
can be ordered, say $H'_1,\ldots,H'_k$, 
in such a way that $H'_i$ has at most one edge in common with $\bigcup_{j<i} H'_j$.
Since $e(H'_i)\geq 3$, we can find two edges 
$$e_1^{(i)},e_2^{(i)} \in E(H'_i) \setminus \bigcup_{j<i} E(H'_j) .$$
Breaker simply plays according to a pairing strategy with pairs $(e_1^{(i)},e_2^{(i)})$,
$i\in [k]$. This way he can prevent Maker from claiming a copy of $H'$ in $G$,
which also means that Breaker blocks all copies of $H$ in  the game on $G$.
\end{proof}

\medskip

\subsection{Proof of Theorem~\ref{finiteVsInfinite}} 
We quickly summarize how 
Theorem~\ref{finiteVsInfinite} follows from our previous discussion. If $H$ has at least one component with at least three edges, then by Lemma~\ref{forests} and Lemma~\ref{cycles} it follows that $\mathcal{M}_{MB}(H)$ contains infinitely many graphs,
and the same follows for 
$\mathcal{M}_{BM}(H)$, due to 
Lemma~\ref{lem:MBBM-Lemma}.
If otherwise each component of $H$ has at most two edges, then 
by Lemma~\ref{lem:finite:matching.and.cherries} we have that
$\mathcal{M}_{BM}(H)$ is finite,
and the same follows for $\mathcal{M}_{MB}(H)$ due to
Lemma~\ref{lem:MBBM-Lemma}. \hfill $\Box$

\medskip

\section{Clique number and chromatic number} \label{sec:clique.chromatic}

The following proof is a generalization of an argument in~\cite{nevsetvril2010ramsey}.

\begin{proof}[Proof of Theorem~\ref{thm:construction.chromatic.clique}]
Let $H$ and $b$ be given.
Let $N > b^2 (1+b)^{e(H)-3} e(H)$
be an arbitrary integer, and let $G_0$ be the $N$-blow-up of $H$,~i.e.~replace 
the vertices $v_i\in V(H)$ by disjoint sets $V_i$ of size $N$, with $i\in [v(H)]$, 
and add all edges between two sets $V_i$ and $V_j$ if $v_iv_j\in E(H)$.
Then $\omega(G_0)=\omega(H)$ and $\chi(G_0)=\chi(H)$.

Next, let $(X,\mathcal{F})$ be the following hypergraph:
$X$ is the edge-set of $G_0$, and $\mathcal{F}$ consists of all copies of $H$
for which the copy of $v_i$ belongs to $V_i$. Then
$|X|=N^2 e(H)$,
$|\mathcal{F}| = N^{v(H)}$,
$\Delta_2(\mathcal{F}) = N^{v(H)-3}$,
and $|F|=e(H)$ for every $F\in \mathcal{F}$.
Therefore, using the choice of $N$, we conclude
\begin{align*}
\sum_{F\in \mathcal{F}} (1+b)^{-|F|} 
=N^{v(H)} (1+b)^{-e(H)}
> \frac{b^2}{(1+b)^3} N^{v(H)-1} e(H)
= \frac{b^2}{(1+b)^3} \Delta_2(\mathcal{F}) |X| .
\end{align*}
Thus, by Theorem~\ref{thm:Weak.Win} we know that
Maker wins the $(1:b)$ Maker-Breaker game on $(X,\mathcal{F})$,
and hence $G_0 \MBarrow_b H$.
In particular, $G_0$ needs to contain a minimal MB-winnable
graph for $H$ with bias $b$ as a subgraph; call it $G$.
Then $\omega(H) \leq \omega(G) \leq \omega(G_0) = \omega(H)$, and hence
$\omega(G)= \omega(H)$. Analgously, we have $\chi(G)=\chi(H)$.
\end{proof}

\medskip

\section{Minimum degrees} \label{sec:degrees}

In this section we first prove Theorem~\ref{thm:general.bounds},
and later on Theorem~\ref{thm:some.precise.degrees}. In fact, for the upper bounds 
in Theorem~\ref{thm:general.bounds} we prove the following stronger result.

\begin{thm}\label{thm:general.bound.Delta}
Let $H$ be a graph with at least one edge.
Let $c:V(H)\rightarrow [\chi(H)]$ be a proper vertex colouring,
and let $\Delta_c(H) = \max\{d_H(v):~v\in V(H),~ c(v)=1\}$.
Then 
$$
s^b_{MB}(H) \leq 
\begin{cases}
2\Delta_c(H) - 1 & ~~ \text{if }b=1 \\
b\Delta_c(H) (\ln(\Delta_c(H)) + 2) & ~~ \text{if }b \geq 2 . \\
\end{cases}
$$
\end{thm}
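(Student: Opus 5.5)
We construct a minimal MB-winnable graph containing a vertex of small degree, following the Burr--Erd\H{o}s--Lov\'asz and Fox--Lin idea of a blow-up with one "special" vertex. Fix the colouring $c$ and choose a vertex $u\in V(H)$ with $c(u)=1$. Take the $N$-blow-up $G_0$ of $H$ as in the proof of Theorem~\ref{thm:construction.chromatic.clique}, with $N$ large enough that the Weak Win Criterion (Theorem~\ref{thm:Weak.Win}) applies with room to spare. By that proof $G_0\MBarrow_b H$. Let $G\subseteq G_0$ be a minimal MB-winnable graph for $H$. The goal is to show that some vertex of $G$ has degree at most $D$, where $D=2\Delta_c(H)-1$ for $b=1$ and $D=b\Delta_c(H)(\ln\Delta_c(H)+2)$ for $b\ge2$.

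The structural fact we need concerns the colour class $1$ of $c$. It is an independent set in $H$, so the vertices of $G$ lying in blow-up classes $V_i$ with $c(v_i)=1$ form an independent set $I$ of $G$, and every vertex $x\in V_i$ of $I$ has all its neighbours in the at most $\Delta_c(H)$ classes corresponding to $N_H(v_i)$. Suppose, for contradiction, that every vertex of $G$ has degree greater than $D$. Then we show that Maker can win on a proper subgraph, namely on $G$ with some vertex $x\in I$ deleted, or more robustly with one edge at $x$ deleted.

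The key step uses Maker's freedom at vertices of $I$. Since $I$ is independent, the stars at distinct vertices of $I$ are edge-disjoint, and $G-I$ together with these stars partitions $E(G)$. Maker plays on $G-I$ as if the colour-$1$ vertices were absent. She postpones all decisions at vertices of $I$. At the end she needs, for a suitable $x\in I$, one Maker edge from $x$ into each of the $\le\Delta_c(H)$ prescribed neighbour sets.

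If $d_G(x)>D$ with degrees spread across the $\Delta_c(H)$ classes, this is exactly the Box-game situation described after Theorem~\ref{thm:Box.game}:
\begin{itemize}
\item for $b\ge2$, Maker can secure one edge into each of $\Delta_c(H)$ sets of size $b\ln\Delta_c(H)+1$;
\item for $b=1$, a pairing on $2\Delta_c(H)-1$ edges is enough.
\end{itemize}
So large degree at every vertex of $I$ makes every $I$-vertex "replaceable". One then argues that deleting an edge at some $I$-vertex keeps Maker winning, which contradicts minimality. Concretely, the plan is to fix a winning strategy on $G$. Maker simulates it, and whenever the strategy would take a specific edge $xy$ with $x\in I$, she instead runs the box/pairing substrategy at $x$, which yields an equivalent embedding because any neighbour in the correct class works for completing the copy of $H$. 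This shows that the winning strategy never needs the particular removed edge.

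The main obstacle is this last step. Maker's winning strategy on $G$ may rely on specific edges at $I$-vertices, and the substitute neighbours must still be compatible with the rest of the copy. Naively they could coincide with other vertices of the copy or lie in the wrong position.

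If the replacement argument is too delicate, we fall back to a direct construction. Build $G^\ast$ as follows:
\begin{itemize}
\item take a graph $G_1$ on which Maker wins the $(H-u)$-game robustly, with many disjoint candidate neighbourhoods, obtained by Weak Win on a blow-up;
\item add a new vertex $x$ joined to $D$ carefully chosen vertices, namely $b\ln\Delta_c(H)+1$ vertices (respectively $2$ vertices for $b=1$) in each of the classes of $N_H(u)$;
\item delete edges of $G_1$ until Maker wins only with the help of $x$.
\end{itemize}
Then take a minimal winnable subgraph and show that $x$ survives in it, so that its degree is at most $D$. The difficulty there shifts to guaranteeing that $x$ is not deleted, i.e.\ that $G_1$ minus the helper is Breaker's win. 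We would attempt this by taking $G_1$ to be MB-winnable for $H-u$ but with all copies of $H$ in $G_1$ destroyed via an $H'$-free Weak Win family, mimicking Lemma~\ref{lem:trees}.
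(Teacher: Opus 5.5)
Your proposal has a genuine gap, and it sits exactly where you put the ``main obstacle''.

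\textbf{Primary route.} You try to show that every minimal winnable subgraph $G$ of the blow-up of $H$ has a vertex of degree at most $D$. Minimality alone gives no such degree bound, and the replacement argument that is supposed to supply it fails. The pairing or Box-game substrategy at $x\in I$ only guarantees Maker \emph{some} edge from $x$ into each relevant class, with the endpoint effectively chosen by Breaker's play. Completing $H$, however, requires $x$ to be adjacent to the \emph{specific} vertices that play the roles of $N_H(u)$ in Maker's copy of $H-u$. So ``any neighbour in the correct class works'' is false. Moreover, once Maker deviates from the fixed winning strategy, the position changes and that strategy is no longer guaranteed to win. Even the premise that $d_G(x)>D$ spreads the neighbours of $x$ over the $\Delta_c(H)$ classes is unjustified.

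\textbf{Fallback.} It has the same defect in another form. A single helper $x$ whose neighbourhood is fixed in advance gives Maker no way to force her copy of $H-u$ in $G_1$ through the neighbours of $x$ she ends up owning. Deleting only $u$ also leaves open whether $G_1$ already wins $H$. In addition, for $b=1$, two neighbours in each of $\Delta_c(H)$ classes gives degree $2\Delta_c(H)$, not $2\Delta_c(H)-1$.

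\textbf{What the paper does instead.} Its proof rests on three ideas that your plan lacks.
\begin{enumerate}
\item It deletes the \emph{whole} colour class, setting $H'=H-c^{-1}(1)$, and takes the $K$-blow-up $\widetilde H$ of $H'$. Theorem~\ref{thm:construction.chromatic.clique} then gives $G_0\in\mathcal{M}^b_{MB}(\widetilde H)$ with $\chi(G_0)=\chi(H)-1$. Hence $G_0$ contains no copy of $H$ at all, and any minimal winnable subgraph of the final graph must use a helper vertex. This settles your ``$x$ survives'' issue for free.
\item Helpers are added for \emph{every} $S\subseteq V(G_0)$ of size $s(\Delta_c(H)-1)+1$, with $s=2$ for $b=1$ and $s=\lceil b(\ln\Delta_c(H)+1)\rceil$ for $b\geq 2$, and with huge multiplicity. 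Maker first claims a copy of $\widetilde H$ in $G_0$. Then, for every choice of $v\in V_i$ and $s$-sets $A_j$ in $\Delta_c(H)-1$ later classes, she uses fresh helpers $w$. She claims $wv$ first, which is why the degree is $s(\Delta_c(H)-1)+1$. She then takes one edge into each $A_j$ by pairing or the Box game. Repeating this $ms^{\Delta_c(H)-1}$ times, pigeonhole gives a red $\Delta_c(H)$-set $S'\ni v$ with $m$ common Maker-helpers.
\item These red sets form a hypergraph in $\mathcal{F}_s(K,v(H'),\Delta_c(H))$. The embedding Lemma~\ref{lem:embedding.ktd}, built on Erd\H{o}s' hypergraph K\H{o}v\'ari--S\'os--Tur\'an theorem, then yields one vertex per blow-up class such that \emph{all} $\Delta_c(H)$-subsets are red. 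This gives a copy of $H'$ that extends to $H$.
\end{enumerate}
The third step is precisely what resolves the ``wrong neighbour'' problem you identified. Without some such Ramsey/Tur\'an-type selection step, your plan cannot make the neighbours obtained from the box games consistent with a single copy of $H$.
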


\subsection{Preparation}

Before we can prove Theorem~\ref{thm:general.bound.Delta}
and Theorem~\ref{thm:general.bounds},
we introduce some useful notation and state a suitable embedding lemma
for our later analysis of Maker's strategy.

\begin{definition}
Given positive integers $k,t,d$ with $t \geq d\geq 2$, we define
$H_{k,t,d}$ to be the $d$-uniform hypergraph
with a partition $V = V_1\cup \ldots \cup V_t$ of its vertex set such that $|V_i|=k$ for every $i\in [t]$,
and such that the  edge set equals 
$$E(H_{k,t,d}) = \left\{ e\in \binom{V}{d}:~ (\forall i\in[t]:~ |e\cap V_i| \leq 1)\right\}.$$ 
\end{definition}

\begin{definition} \label{def:family.Fktd}
Given positive integers $k,t,d,s$ with $t\geq d\geq 2$ and $k\geq s$
we define $\mathcal{F}_s(k,t,d)$ to be the family of all $d$-uniform hypergraphs
$\mathcal{H}$ that satisfy the following property:
There exists a partition of the vertex set $V(\mathcal{H}) = V_1\cup \ldots \cup V_t$
such that
\begin{enumerate}
\item[(i)] $|V_i| = k$ for every $i\in [t]$,
\item[(ii)] $|e\cap V_i| \leq 1$ for every $i\in [t]$ and $e\in E(\mathcal{H})$,
\item[(iii)] for every $i\in [t]$, every vertex $v\in V_i$, 
every index set $J\in \binom{[t]\setminus [i]}{d-1}$ and every choice of sets
$A_j\in \binom{V_j}{s}$ for $j\in J$, we have 
$e_{\mathcal{H}}\big( \{v\}\cup \bigcup_{j\in J} A_j\big) \neq 0.$
\end{enumerate}
\end{definition}

We make use of the following embedding result.

\begin{lemma}\label{lem:embedding.ktd}
For all positive integers $k,t,d,s$ with $t\geq d \geq 2$
there exists a positive integer $K = K(k,t,d,s)$ such that
every hypergraph in $\mathcal{F}_s(K,t,d)$ contains a copy of
$H_{k,t,d}$.
\end{lemma}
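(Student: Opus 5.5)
Our plan is to prove the lemma by induction on $t$, peeling off the first part $V_1$ and using the fact that condition (iii) for $i=1$ concerns all later parts. The idea is to choose a large $K$ and iteratively shrink the parts $V_2,\dots,V_t$ so that a set of $k$ vertices in $V_1$ becomes fully joined to the remaining structure. In the end we will have sets $B_1\subseteq V_1,\dots,B_t\subseteq V_t$ of size $k$ such that every transversal $d$-set (at most one vertex from each $B_i$) is an edge.

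\emph{Step 1 (reduction for one vertex).} Fix $v\in V_1$ and consider its link: the $(d-1)$-uniform family of sets $f$ with $f\cup\{v\}\in E(\mathcal{H})$, restricted to the parts $V_2,\dots,V_t$. By (iii), for every $J\in\binom{[t]\setminus[1]}{d-1}$ and all $s$-subsets $A_j\subseteq V_j$, some transversal of $\prod_{j\in J}A_j$ is in the link. So the complement of the link inside the complete $(d-1)$-partite hypergraph on $(V_j)_{j\in J}$ contains no copy of $K^{(d-1)}_{s,\dots,s}$.

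By the Kővári--Sós--Turán/Erdős hypergraph bound applied in a Ramsey fashion (for $d-1=1$ this is trivial: fewer than $s$ non-neighbours in each part), we can pass to subsets $V_j'\subseteq V_j$ of a prescribed size $K'$, with $K$ huge compared to $K'$. On these subsets every $(d-1)$-transversal inside $\bigcup_j V_j'$ lies in the link of $v$. For this we will use the multipartite hypergraph Ramsey theorem: colour each $(d-1)$-transversal by whether it is in the link. A monochromatic complete $(d-1)$-partite subhypergraph with parts of size $\max(K',s)$ exists for every $J$ simultaneously after iterating over the finitely many $J$. The non-link colour is excluded once the part size is at least $s$.

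\emph{Step 2 (handling many vertices of $V_1$).} We need $k$ vertices of $V_1$ sharing common subsets. We process $v_1,v_2,\dots$ one at a time, each time shrinking the current sets $V_j'$ via Step~1 applied to the subhypergraph. Property (iii) is inherited by subsets of size at least $s$, since the sets $A_j$ simply range over fewer choices. After $k$ iterations we obtain $B_1=\{v_1,\dots,v_k\}$ and sets $W_j\subseteq V_j$, $j\geq 2$, of size $K''$. Every edge of $H_{\cdot,t,d}$ meeting $B_1$ and the sets $W_j$ is then present. The required sizes are obtained by defining $K$ as a tower of Ramsey functions.

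\emph{Step 3 (induction).} The restriction of $\mathcal{H}$ to $W_2\cup\dots\cup W_t$ lies in $\mathcal{F}_s(K'',t-1,d)$ after relabelling, since (iii) only refers to later parts. If $t-1\geq d$, induction yields $B_j\subseteq W_j$ of size $k$ forming $H_{k,t-1,d}$. If $t-1<d$, there are no edges to find and any $k$-subsets work. Edges containing a vertex of $B_1$ are already guaranteed by Step~2, so $B_1\cup\dots\cup B_t$ spans $H_{k,t,d}$. Thus $K(k,t,d,s)$ is defined recursively from $K(k,t-1,d,s)$.

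The main obstacle is Step~1 for $d\geq 3$. There we need a multipartite hypergraph Ramsey statement: a 2-colouring of the transversals of a complete $(d-1)$-partite hypergraph with huge parts contains a monochromatic complete $(d-1)$-partite subhypergraph with parts of size $m$. We intend to prove this by induction on $d$ via the standard product (pigeonhole over neighbourhood patterns) argument. Everything else is bookkeeping of sizes.
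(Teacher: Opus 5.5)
Your proof is correct, and its architecture matches the paper's. The paper also inducts on $t$. It first makes every relevant vertex of $V_1$ have a complete link on $\prod_{j\in J}V_j$ for every $J\in\binom{[t]\setminus\{1\}}{d-1}$ by repeatedly shrinking the later parts; this is the bookkeeping organised in Lemmas~\ref{lem:FktdAJs_part1}--\ref{lem:FktdAJs_part3} via the families $\mathcal{F}_s(k,t,d,\mathcal{A},\mathcal{K},r)$. It then applies induction to $V_2\cup\dots\cup V_t$, which still satisfies (iii) because (iii) only looks at later parts.

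The genuine difference is the tool used to complete a single link.
\begin{itemize}
\item The paper uses (iii) to get an averaging bound $e(\mathcal{H}_v)\geq K^{d-1}/s^{d-1}$. It then invokes Erd\H{o}s's hypergraph K\H{o}v\'ari--S\'os--Tur\'an theorem (Theorem~\ref{thm:erdos.extremal}) to find a copy of $H_{k,d-1,d-1}$ in the link.
\item You instead 2-colour the transversals and use a product Ramsey theorem. Property (iii) rules out a monochromatic non-link box of side at least $s$.
\end{itemize}

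Your route is more elementary and self-contained. It needs only pigeonhole over patterns, and it needs no check that the complete $(d-1)$-partite copy is aligned with the parts $V_j$, since for boxes this is built in. It also absorbs the base case $t=d$ into the same peeling step, with a trivial Step~3. The paper handles $t=d$ separately, by applying Theorem~\ref{thm:erdos.extremal} directly to $\mathcal{H}$. In return, the paper's density route gives a much better dependence of $K$ on $k$ in each step. That is irrelevant here, since the lemma is only used qualitatively.

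You also swap the order of the loops: vertices of $V_1$ outer, index sets $J$ inner, with only $k$ vertices ever processed. The paper runs over $J$ outermost and lets $V_1$ shrink. This difference is immaterial.
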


We postpone the proof of Lemma~\ref{lem:embedding.ktd} to Section~\ref{sec:embedding.ktd},
and first show how Theorem~\ref{thm:general.bound.Delta} and Theorem~\ref{thm:general.bounds} 
can be deduced from that.

\subsection{Proof of general bounds}

\begin{proof}[Proof of Theorem~\ref{thm:general.bound.Delta}]
Let $c:V(H)\rightarrow [\chi(H)]$ be a proper vertex colouring,
let $m:=|c^{-1}(1)|$ and let $H' = H - c^{-1}(1)$
with vertex set $V(H')=\{v_1,\ldots,v_{v(H')}\}$.
Let 
$$
s := 
\begin{cases}
2 & ~~ \text{if }b=1 \\
\left\lceil b \big(\ln (\Delta_c(H)) + 1 \big) \right\rceil & ~~ \text{if }b \geq 2 ,
\end{cases}
$$
and let
$K=K(1,v(H'),\Delta_c(H),s)$ be the constant promised by
Lemma~\ref{lem:embedding.ktd} (i.e.~with input $k:=1$, $t:=v(H')$, $d:=\Delta_c(H)$ and $s$).
Moreover, let $\widetilde{H}$ be the $K$-blow-up of $H'$,~i.e.~replace 
the vertices $v_i\in V(H')$ by disjoint sets $V_i$ of size $K$, with $i\in [v(H')]$, 
and add all edges between two sets $V_i$ and $V_j$ if $v_iv_j\in E(H')$.
Then $\chi(\widetilde{H}) = \chi(H') = \chi(H) - 1$.
We now consider the following contruction:
\begin{enumerate}
\item[(1)] Let $G_0$ be a graph such that $G_0\in \mathcal{M}_{MB}^b(\widetilde{H})$ and
$\chi(G_0)=\chi(\widetilde{H})$. (existence by Theorem~\ref{thm:construction.chromatic.clique})
\item[(2)] For every vertex set $S\subset V(G_0)$ of size $s(\Delta_c(H)-1)+1$ add a set $W_S$
of $$(b+1) \cdot \left[ e(G_0) + \binom{v(G_0)}{s(\Delta_c(H)-1) + 1} \Delta_c(H) ms^{\Delta_c(H)-1} \right]$$ new vertices, and add all edges between $S$ and $W_S$.
\end{enumerate}
Call the resulting graph $G$ so that we obtain the vertex set
$V(G)  := V(G_0) \cup \bigcup_{S} W_S$ and the edge set $E(G)  := E(G_0) \cup \bigcup_{S} \{ xy:~ x\in S, y\in W_S\}$,
where the union runs over all $S\subset V(G_0)$ of size $s(\Delta_c(H)-1)+1$.
We claim that the following holds: 
\begin{enumerate}
\item[(a)] $G_0$ is not winnable for $H$, and  (b) $G$ is winnable for $H$.
\end{enumerate}

Note that once (a) and (b) are proved, the theorem follows. Indeed,
by (b) we then know that $G$ needs to have a minimal winnable graph for $H$ as subgraph, say $G'$. 
Then, by (a) it follows that $G'$ needs to use at least one vertex $x\in V(G)\setminus V(G_0)$. 
Hence, $\delta(G') \leq d_{G'}(x) \leq d_G(x) = s(\Delta_c(H)-1)+1$, and the desired bound on $s_{MB}^b(H)$ follows.

\medskip

Property (a) is trivial, since 
$\chi(G_0) < \chi(H)$, and hence
$G_0$ does not contain a copy of $H$. Hence, it remains to prove (b), 
i.e.~to find a winning strategy for Maker in the $H$-game on $G$.
In the following, we first describe a suitable strategy and explain why 
Maker can follow it (Strategy). Only afterwards, we explain why she claims a copy of $H$ 
by following the strategy (Embedding part).

\medskip

\textbf{Strategy:} Maker's strategy consists of two stages.

\smallskip

\textbf{Stage I:} At first, Maker plays on $G_0$ only and claims a copy of $\widetilde{H}$. This she can do by the choice of $G_0$.

\smallskip

\textbf{Stage II:} Abusing notation slightly, denote this copy of $\widetilde{H}$ by $\widetilde{H}$,
and as before, let $V_1,\ldots,V_{v(H')}$ denote the vertex subsets in $\widetilde{H}$
representing the blow-ups of the vertices $v_1,\ldots,v_{v(H')}\in V(H')$. 
Now, Maker considers iteratively (in an arbitrary ordering) 
all choices of
an index $i\in [v(H')]$, 
a vertex $v\in V_i$, 
an index set $J\in \binom{[v(H')]\setminus [i]}{\Delta_c(H)-1}$, 
subsets $A_j \in \binom{V_j}{s}$ with $j\in J$,
and the set $S:=\{v\}\cup \bigcup_{j\in J} A_j$,
and for each such a choice she does the following:
within at most $\Delta_c(H) ms^{\Delta_c(H)-1}$ rounds,
she ensures that for $m s^{\Delta_c(H)-1}$
vertices $w\in W_S$
she claims the edge $wv$ and one edge
from $w$ to $A_j$ for every $j\in J$.

She can do this by the following reason:
At first observe that, in
Stage I and II together Maker plays at most
$$e(G_0) + \binom{v(G_0)}{s(\Delta_c(H)-1) + 1} \Delta_c(H) m s^{\Delta_c(H)-1}$$
rounds. 
Indeed, Stage I is played on $G_0$ only and this takes at most
$e(G_0)$ rounds.
Moreover, there are less than 
$\binom{v(G_0)}{s(\Delta_c(H)-1) + 1}$ iterations in Stage II and each of them
lasts at most $\Delta_c(H) m s^{\Delta_c(H)-1}$ rounds.

Hence, by part (2) of the construction, at any moment in Stage II and for any choice of $i,v,J,A_j,S$ 
as described above, Maker finds a vertex $w\in W_S$ such that all edges
between $w$ and $S$ are still free.
Maker can then claim $wv$ first, and afterwards, applying a pairing strategy (if $b=1$)
or BoxBreaker's strategy from Theorem~\ref{thm:Box.game} (if $b\geq 2$),
she can get exactly one edge from $w$ to $A_j$ for every $j\in J$.
As this takes $\Delta_c(H)$ rounds for any such fixed vertex $w$,
an iteration for a fixed choice of $i,v,J,A_j,S$
lasts at most $\Delta_c(H) m s^{\Delta_c(H)-1}$ rounds, as required.

\medskip

\textbf{Embedding part:} It remains to show that Maker gets a copy of $H$.

\smallskip

Let $i,v,J,A_j,S$ be any choice as described in Stage II.
Maker has $m s^{\Delta_c(H)-1}$ vertices $w\in W_S$
such that she claims the edge $wv$ and one edge
from $w$ to $A_j$ for every $j\in J$.
Since there are only $s^{|J|}$ options how the neighbours of a fixed vertex $w\in W_S$ 
in these sets $A_j$ may be chosen, by the pigeonhole principle,
Maker then finds $m$ vertices in $W_S$ with the same neighbours.
In particular, there is a set $S'\subset S$ of size $\Delta_c(H)$, which contains $v$ and one vertex
from each $A_j$, $j\in J$, such that Maker has all edges between these $m$ vertices of $W_S$ 
and the set $S'$. For the given choice of $i,v,J,A_j,S$ fix one such set $S'$ and give it colour red.

Running over all choices $i,v,J,A_j,S$,
the resulting hypergraph $\mathcal{R}$ of red hyperedges lives on $\bigcup_{i\in v(H')} V_i$
and, due to its construction, it belongs to the family $\mathcal{F}_s(K,v(H'),\Delta_c(H))$.
Hence, by Lemma~\ref{lem:embedding.ktd} and the choice of $K$,
we find a copy of $H_{1,v(H'),\Delta_c(H)}$ in this hypergraph.
Let $x_i\in V_i$ be the unique vertex belonging to this copy, for every $i\in [v(H')]$.
Then, since $\widetilde{H}$ is a blow-up of $H'$, the vertex $x_i$ can be considered as a copy of $v_i\in V(H')$, i.e.~on the vertex set $\{x_1,\ldots,x_{v(H')}\}$
we find a copy $F'$ of $H'$. Moreover, since every set $S'\in \binom{V(F')}{\Delta_c(H)}$ 
is a red hyperedge in $\mathcal{R}$, we know that for every such set $S'$
there exist at least $m$ additional vertices such that Maker claims all edges between them and
$S'$. Using these additional vertices, we can complete $F'$ to a copy of $H$,
since every vertex in $V(H)\setminus V(H')$ has degree at most $\Delta_c(H)$
and all its neighbours belong to $V(H')$. 
\end{proof}

\begin{proof}[Proof of Theorem~\ref{thm:general.bounds}]
We first prove that $(b+1)(\delta(H)-1) + 1 \leq s^b_{BM}(H)$ for every $b\in \mathbb{N}$,
hence giving the first inequalities in (i) and (ii) of the theorem.
For contradiction, assume that there exists $G\in \mathcal{M}_{BM}^b(H)$ 
such that $\delta(G) \leq (b+1)(\delta(H)-1)$.
Then there exists a vertex $v\in V(G)$ with $d_G(v)\leq (b+1)(\delta(H)-1)$.
Since $G$ is minimal, we know that Breaker has a strategy $\mathcal{S}$ to always 
win the $(1:b)$ biased $H$-game on $G-v$ (when Breaker is the first player).
On the graph $G$, Breaker can play on $G-v$ according to strategy $\mathcal{S}$ (being the first player)
and from the remaining edges incident with $v$ he can always claim (up to) $b$ edges whenever Maker claimed an edge incident with $v$ in the previous rounds.
This way, Maker cannot obtain a copy of $H$ within $G-v$ (as Breaker plays according to $\mathcal{S}$)
and Maker also cannot claim a copy of $H$ involving the vertex $v$,
as Maker gets at most $\delta(H)-1$ edges incident with $v$. Hence, Breaker 
wins the $(1:b)$ biased $H$-game on $G-v$, in contradiction to $G\in \mathcal{M}_{BM}^b(H)$.

\medskip

Next, let us prove $s_{BM}^b(H) \leq s_{MB}^b(H)$,
hence giving the second inequalities in (i) and (ii) of the theorem.
For this let $F\in \mathcal{M}_{MB}^b(H)$ with $\delta(F) = s_{MB}^b(H) =:k$,
and let $v\in V(F)$ be an arbitrary vertex of $F$ with $d_F(v) = k$.
We consider the graph
$G:=F_1 + \ldots + F_{b+1}$
where each $F_i$ is a copy of $F$,
and we let $v_i$ denote the copy of $v$ in $F_i$, for every $i\in [b+1]$. 
Then, even if Maker plays as second player on $G$, she can ensure to be the first player on
one of the copies $F_i$. Playing on this copy only, Maker can win the $H$-game,
since $F_i\in \mathcal{M}_{MB}^b(H)$.
Moreover, independent of whether Maker is first or second player, 
she cannot win the $(1:b)$ biased $H$-game on $G-\{v_i:~i\in [b+1]\}$,
since Breaker (as second player) wins the $(1:b)$ biased $H$-game on $F_i-v_i$, for every $i\in [b+1]$.
Hence, $G$ needs to contain a subgraph $G'\in \mathcal{M}_{BM}^b(H)$,
but this subgraph needs to contain at least one of the vertices $v_i$.
Therefore, $s_{BM}^b(H) \leq d_{G'}(v_i) \leq d_{F_i}(v_i) = k = s_{MB}(H)$.

\medskip

Finally, the third inequalities in (i) and (ii) of the theorem follow directly from    
Theorem~\ref{thm:general.bound.Delta}.
\end{proof}

\subsection{Proof of Lemma~\ref{lem:embedding.ktd}} \label{sec:embedding.ktd}

For the proof of Lemma~\ref{lem:embedding.ktd} we make use of the following generalization of
the K\H{o}v\'ari-S\'os-Tur\'an bound~\cite{kovari1954problem}.

\begin{thm}[Theorem 1 in~\cite{erdos1964extremal}] \label{thm:erdos.extremal}
For every $k,d\in\mathbb{N}$ there exists $n_0\in \mathbb{N}$
such that for all integers $n\geq n_0$ the following holds:
Every $d$-uniform hypergraph $\mathcal{H}$ on $n$ vertices and
more than $n^{d-k^{1-d}}$ edges contains a copy of $H_{k,d,d}$.
\end{thm}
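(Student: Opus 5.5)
We prove the slightly stronger statement in which the edge threshold $n^{d-k^{1-d}}$ is replaced by $c\,n^{d-k^{1-d}}$ for an arbitrary fixed constant $c>0$, with $n_0=n_0(k,d,c)$. The theorem is the case $c=1$. The extra constant is what makes an induction on $d$ work: in the inductive step the exponent is preserved exactly but a multiplicative constant is lost, and the strengthened form absorbs that loss.

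\emph{Base case $d=2$.} This is the K\H{o}v\'ari--S\'os--Tur\'an argument. Count pairs $(v,T)$ with $v$ a vertex and $T$ a $k$-subset of the neighbourhood of $v$. By convexity of $x\mapsto\binom{x}{k}$, the number of such pairs is at least
\[
n\binom{2e/n}{k}\gtrsim n\,(2c)^k n^{k(1-1/k)}/k! = (2c)^k n^{k}/k!,
\]
since $2e/n\geq 2c\,n^{1-1/k}$. For $n$ large this exceeds $(k-1)\binom nk$. By pigeonhole, some $k$-set $T$ then has at least $k$ common neighbours; these lie outside $T$ because each vertex of $T$ is not its own neighbour. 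This gives $K_{k,k}=H_{k,2,2}$.

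\emph{Inductive step $d-1\to d$.} Let $\mathcal H$ be $d$-uniform with $e(\mathcal H)\geq c\,n^{d-k^{1-d}}$. For a $(d-1)$-set $S$, let $\deg(S)$ be the number of vertices $v$ with $S\cup\{v\}\in E(\mathcal H)$. The average of $\deg(S)$ over all $(d-1)$-sets is at least $c'\,n^{1-k^{1-d}}$. Now count pairs $(S,T)$ where $T$ is a $k$-set with $S\cup\{v\}\in E(\mathcal H)$ for all $v\in T$. Jensen gives at least $c''\,n^{d-1}\cdot n^{k-k^{2-d}}$ such pairs. Since there are at most $n^k$ choices of $T$, some $k$-set $T$ has a common link: a $(d-1)$-uniform hypergraph $\mathcal L_T$ with at least $c''\,n^{d-1-k^{2-d}}$ edges. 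Note that $k^{2-d}=k^{1-(d-1)}$, which is exactly the exponent required by the induction hypothesis for $d-1$.

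\emph{Avoiding $T$.} We discard the edges of $\mathcal L_T$ that meet $T$. There are $O(n^{d-2})$ of them, which is negligible because $d-1-k^{2-d}>d-2$. We then apply the induction hypothesis with the constant $c''/2$ to $\mathcal L_T$ restricted to $V\setminus T$. This yields a copy of $H_{k,d-1,d-1}$ with parts $U_1,\ldots,U_{d-1}$, each disjoint from $T$.

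\emph{Conclusion.} Every transversal $(u_1,\ldots,u_{d-1})$ of the parts is an edge of $\mathcal L_T$. By definition of the common link, $\{u_1,\ldots,u_{d-1},v\}\in E(\mathcal H)$ for every $v\in T$. Hence $T,U_1,\ldots,U_{d-1}$ span a copy of $H_{k,d,d}$.

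\emph{Main obstacle.} The delicate point is the bookkeeping in the Jensen step. We must check that the exponent comes out exactly as $d-1-k^{2-d}$, and that the lower-order corrections are dominated for $n$ large. These corrections come from $\binom{x}{k}\approx x^k/k!$ being valid only when the average degree tends to infinity (true since $k^{1-d}<1$), and from passing from $\binom{n}{d-1}$ to $n^{d-1}$. All the constants depend only on $k,d,c$, so they are absorbed into the freedom to choose $n_0$.
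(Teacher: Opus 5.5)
The paper does not prove this statement; it quotes it from Erd\H{o}s (1964). Your outline is the standard argument: K\H{o}v\'ari--S\'os--Tur\'an for $d=2$, then a $k$-set $T$ with a dense common link, then induction on $d$. The gap is in how you handle the constants. The strengthened statement, with an arbitrary fixed $c>0$, is false. For $d=k=2$, the incidence graph of a projective plane of order $q$ has $n=2(q^2+q+1)$ vertices and more than $2^{-3/2}n^{3/2}$ edges, yet contains no $C_4=H_{2,2,2}$. So the claim fails for every $c\le 1/4$.

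Your own base case shows the failure. The quantity $(2c)^k n^k/k!$ exceeds $(k-1)\binom nk\approx (k-1)n^k/k!$ only if $(2c)^k>k-1$. That is a comparison of leading constants, and no choice of $n_0$ can repair it. Your inductive step passes $c''/2$ down to $d-1$, and you expect $c''<c$. So the induction ultimately rests on this false small-$c$ base case. The loss is real with your counting: bounding the number of $k$-sets by $n^k$ gives, already at $c=1$, $c''\approx d\,(d!)^{k-1}/k!$. This is below $1$ for, e.g., $d=3$, $k=15$.

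The fix is to keep the constant equal to $1$ and count precisely; the constant then grows instead of shrinking. Set $N=\binom{n}{d-1}\le n^{d-1}/(d-1)!$ and use $\sum_S\deg(S)=d\,e(\mathcal H)>d\,n^{d-k^{1-d}}$. Jensen gives
\[
\sum_{S}\binom{\deg(S)}{k}\;\ge\;(1-o(1))\,\frac{(d\,e(\mathcal H))^{k}}{k!\,N^{k-1}}\;\ge\;(1-o(1))\,\frac{d\,(d!)^{k-1}}{k!}\,n^{\,d-1+k-k^{2-d}} .
\]
Now divide by $\binom nk\le n^k/k!$ rather than by $n^k$. This yields a $k$-set $T$ whose common link has at least $(1-o(1))\,d\,(d!)^{k-1}\,n^{d-1-k^{2-d}}$ edges. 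Since $d\,(d!)^{k-1}>1$, this exceeds $n^{(d-1)-k^{1-(d-1)}}$ for large $n$. The induction hypothesis with constant $1$ then applies directly, and the base case with constant $1$ holds because $2^k>k-1$.

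Two minor points. Every edge $S$ of the common link is automatically disjoint from $T$, since each $S\cup\{v\}$ must have $d$ elements, so your discarding step is unnecessary. The case $k=1$, where your exponent comparisons degenerate, is trivial and should be set aside.
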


The proof of Lemma~\ref{lem:embedding.ktd} is done by induction,
where the base case follows almost directly from the above theorem.
In order to make the inductive step work, we extend Definition~\ref{def:family.Fktd}.

\begin{definition}
Given a $d$-uniform hypergraph $\mathcal{H}$,
a vertex $v\in V(\mathcal{H})$ and a vertex set
$A\subseteq V(\mathcal{H})\setminus \{v\}$ we let
the $(d-1)$-uniform hypergraph
$$
\mathcal{H}_v[A] := \left( A, \left\{ f\in \binom{A}{d-1}:~ f\cup \{v\}\in E(\mathcal{H})\right\} \right)
$$
be the link of $v$ in $A$.
\end{definition}

\begin{definition}\label{def:FktdAJs}
Let $k,t,d,s,r$ be positive integers with $t\geq d \geq 2$ and $k\geq r$,
let $\mathcal{A} \subseteq \binom{[t]\setminus \{1\}}{d-1}$
and $\mathcal{K}\in \binom{[t]\setminus \{1\}}{d-1} \setminus \mathcal{A}$.
We define $\mathcal{F}_s(k,t,d,\mathcal{A},\mathcal{K},r)$ to be the family of all 
$d$-uniform hypergraphs
$\mathcal{H}$ that satisfy the following property:
There exists a partition of the vertex set $V(\mathcal{H}) = V_1\cup \ldots \cup V_t$
such that
\begin{enumerate}
\item[(i)] $|V_i| = k$ for every $i\in [t]$,
\item[(ii)] $|e\cap V_i| \leq 1$ for every $i\in [t]$ and $e\in E(\mathcal{H})$,
\item[(iii)] for every $i\in [t]$, every vertex $v\in V_i$, 
every index set $J\in \binom{[t]\setminus [i]}{d-1}$ and every choice of sets
$A_j\in \binom{V_j}{s}$ for $j\in J$, we have 
$e_{\mathcal{H}}\big( \{v\}\cup \bigcup_{j\in J} A_j\big) \neq 0$,
\item[(iv)] for every $v\in V_1$ and every $J\in \mathcal{A}$ we have 
	$\mathcal{H}_v\big[ \bigcup_{j\in J} V_j \big] \cong H_{k,d-1,d-1}$,
\item[(v)] there exists a set $X_1 \in \binom{V_1}{r}$ such that 
	 $\mathcal{H}_v\big[ \bigcup_{j\in \mathcal{K}} V_j \big] \cong H_{k,d-1,d-1}$ for every $v\in X_1$.
\end{enumerate}
\end{definition}

Next, we prepare our inductive proof with three short lemmas.

\begin{lemma}\label{lem:FktdAJs_part1}
For all positive integers $k,t,d,s$ with $t\geq d \geq 2$ there exists an integer $K= f_{t,d,s}(k)$
such that the following holds for every 
$r<k$, every family $\mathcal{A} \subset \binom{[t]\setminus \{1\}}{d-1}$
and every index set $\mathcal{K}\in \binom{[t]\setminus \{1\}}{d-1} \setminus \mathcal{A}$:
Every hypergraph $\mathcal{H}\in \mathcal{F}_s(K,t,d,\mathcal{A},\mathcal{K},r)$
contains a hypergraph $\mathcal{H}'\in \mathcal{F}_s(k,t,d,\mathcal{A},\mathcal{K},r+1)$.
\end{lemma}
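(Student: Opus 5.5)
The plan is to obtain $\mathcal{H}'$ as an induced subhypergraph of $\mathcal{H}$. We shrink every class $V_j$ to a subset $V_j'$ of size $k$, chosen so that $X_1$ survives inside $V_1'$ together with one additional vertex $v\in V_1\setminus X_1$ whose link is complete on the new classes indexed by $\mathcal{K}$. Throughout we take $k\geq s$, as in Definition~\ref{def:family.Fktd}. We also take $K\geq k$ large and $r<k$, so that $V_1\setminus X_1\neq\varnothing$ and $|X_1\cup\{v\}|=r+1\leq k$.

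First, properties (ii)–(iv) pass to any induced subhypergraph on sets $V_j'\subseteq V_j$ with $|V_j'|=k\geq s$. For (ii) this is immediate. For (iii), every choice of $s$-sets $A_j\subseteq V_j'$ is also a choice inside $V_j$, and the edge it contains is induced. For (iv), the restriction of a complete $(d-1)$-partite link $H_{K,d-1,d-1}$ to subsets of size $k$ of its parts is $H_{k,d-1,d-1}$. The same restriction argument shows that the vertices of $X_1$ keep property (v). So the only real task is to produce the new vertex $v$ and the sets $V_j'$ for $j\in\mathcal{K}$.

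Fix any $v\in V_1\setminus X_1$ and let $\mathcal{L}=\mathcal{H}_v[\bigcup_{j\in\mathcal{K}}V_j]$, a $(d-1)$-partite $(d-1)$-uniform hypergraph on $n=(d-1)K$ vertices with parts $V_j$, $j\in\mathcal{K}$.

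Applying (iii) with $i=1$ and $J=\mathcal{K}$, every choice of $s$-sets $A_j\subseteq V_j$ spans an edge of $\mathcal{L}$. A single edge lies in exactly $\binom{K-1}{s-1}^{d-1}$ of the $\binom{K}{s}^{d-1}$ such boxes. Double counting therefore gives $e(\mathcal{L})\geq (K/s)^{d-1}$.

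If $d=2$, the link is $1$-uniform, and "every $s$-set of $V_j$ meets it" already gives at least $K-s+1\geq k$ neighbours of $v$ in $V_j$; take $V_j'$ to be $k$ of them.

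If $d\geq 3$, apply Theorem~\ref{thm:erdos.extremal} with uniformity $d-1$ and part size $k'=(d-1)k$. For $K$ large in terms of $k,d,s$,
$$(K/s)^{d-1} > \big((d-1)K\big)^{(d-1)-k'^{\,2-d}},$$
so $\mathcal{L}$ contains a copy of $H_{k',d-1,d-1}$ with parts $P_1,\ldots,P_{d-1}$.

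The main obstacle is that these parts need not align with the classes $V_j$. We fix this by pigeonhole: each $P_i$ contains $k$ vertices lying in a single class $V_{j(i)}$. Restricting to these subsets keeps the copy complete. Since every edge of $\mathcal{L}$ meets each $V_j$ at most once, the indices $j(i)$ must be pairwise distinct, so they run exactly over $\mathcal{K}$. We then set $V_{j(i)}'$ to be these $k$-subsets.

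Finally, let $V_1'$ consist of $X_1\cup\{v\}$ padded arbitrarily to size $k$, and choose arbitrary $k$-subsets $V_j'$ for all remaining $j$. The induced subhypergraph on $\bigcup_j V_j'$ is then in $\mathcal{F}_s(k,t,d,\mathcal{A},\mathcal{K},r+1)$ with $X_1'=X_1\cup\{v\}$. The required $K=f_{t,d,s}(k)$ is the maximum of $k$ and the threshold from Theorem~\ref{thm:erdos.extremal} together with the density inequality above; it depends only on $k,d,s$ and not on $r$, $\mathcal{A}$ or $\mathcal{K}$.
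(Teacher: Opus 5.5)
Your proof is correct and follows the paper's route: pick $v\in V_1\setminus X_1$, use (iii) with $i=1$ and $J=\mathcal{K}$ to get $e(\mathcal{H}_v[\bigcup_{j\in\mathcal{K}}V_j])\geq (K/s)^{d-1}$, apply Theorem~\ref{thm:erdos.extremal} to this link, and pass to the induced subhypergraph with $X_1\cup\{v\}\subseteq V_1'$. Your separate treatment of $d=2$ (where you should also require $K\geq k+s-1$ in your choice of $K$) covers a case that the paper's citation of Theorem~\ref{thm:erdos.extremal} glosses over, while your pigeonhole step with part size $(d-1)k$ is harmless but unnecessary: for $d\geq 3$, every link edge meets each $V_j$ with $j\in\mathcal{K}$ exactly once, so two vertices in the same part of a complete $(d-1)$-partite copy complete the same $(d-2)$-transversal of the other parts and must lie in the same class, which means the copy is automatically aligned with the classes, as the paper silently uses when it sets $V_j'=V(\mathcal{H}_0)\cap V_j$.
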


\begin{proof}
We choose $K$ to be large.
For $\mathcal{H}\in \mathcal{F}_s(K,t,d,\mathcal{A},\mathcal{K},r)$,
let $V(\mathcal{H})=V_1\cup \ldots \cup V_t$ be the partition promised by
Definition~\ref{def:FktdAJs}, and let $X_1$ be the set promised by (v).
Choose $v\in V_1\setminus X_1$ (which is possible for $K\geq k$).
Then from (iii) we can conclude that
the $(d-1)$-uniform hypergraph $\mathcal{H}_v := \mathcal{H}_v \big[ \bigcup_{j\in \mathcal{K}} V_j \big]$
satisfies
$$
e\left( \mathcal{H}_v \right) \geq \frac{1}{s^{d-1}} K^{d-1} 
= \frac{1}{s^{d-1}(d-1)^{d-1}} \cdot v(\mathcal{H}_v)^{d-1}.
$$
One way to see the first equality is to use the following random experiment. At first choose uniformly at random sets $A_j \in \binom{V_j}{s}$ for $j\in \mathcal{K}$, and then choose uniformly at random a vertex $a_j \in A_j$ for $j\in \mathcal{K}$. Then
$$
\frac{e\left( \mathcal{H}_v \right)}{K^{d-1}} = \text{Prob}\left( \{a_j:~j\in \mathcal{K}\} \in E(\mathcal{H}_v) \right) \geq \frac{1}{s^{d-1}},
$$
where the equation holds due to the uniform choice, and the inequality holds by property (iii)
applied with $i=1$ and $J=\mathcal{K}$.

By Theorem~\ref{thm:erdos.extremal} 
we know that, if $K$ is large enough and hence $v(\mathcal{H}_v)$ is large enough, 
we can find a subhypergraph $\mathcal{H}_0$ in $\mathcal{H}_v[ \bigcup_{j\in \mathcal{K}} V_j ]$
which is isomorphic to $H_{k,d-1,d-1}$.
For every $j\in \mathcal{K}$ set $V_j' = V(\mathcal{H}_0) \cap V_j$,
for every $j\in [t]\setminus (\mathcal{K}\cup \{1\})$ let $V_j' \in \binom{V_j}{k}$ be arbitrary,
and choose $V_1'\in \binom{V_1}{k}$ such that $X_1\cup \{v\} \subseteq V_1'$.
Then $\mathcal{H}' = \mathcal{H}[ \bigcup_{j\in [t]} V_j']$
belongs to $\mathcal{F}_s(k,t,d,\mathcal{A},\mathcal{K},r+1)$.
\end{proof}

\begin{lemma}\label{lem:FktdAJs_part2}
For all positive integers $k,t,d,s$ with $t\geq d \geq 2$ there exists an integer $K= g_{t,d,s}(k)$
such that the following holds for every 
$\mathcal{A} \subset \binom{[t]\setminus \{1\}}{d-1}$
and every index set $\mathcal{K}\in \binom{[t]\setminus \{1\}}{d-1} \setminus \mathcal{A}$:
Every hypergraph $\mathcal{H}\in \mathcal{F}_s(K,t,d,\mathcal{A},\varnothing,0)$
contains a hypergraph $\mathcal{H}'\in \mathcal{F}_s(k,t,d,\mathcal{A}\cup \{\mathcal{K}\},\varnothing,0)$.
\end{lemma}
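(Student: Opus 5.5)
Starting from $\mathcal{H}\in\mathcal{F}_s(K,t,d,\mathcal{A},\varnothing,0)$, I will iterate Lemma~\ref{lem:FktdAJs_part1} $k$ times with the fixed index set $\mathcal{K}$. This grows the set $X_1$ from size $0$ up to size $k$, and at the end I restrict $V_1$ to $X_1$.

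First I observe that membership in $\mathcal{F}_s(K,t,d,\mathcal{A},\varnothing,0)$ implies membership in $\mathcal{F}_s(K,t,d,\mathcal{A},\mathcal{K},0)$ for our chosen $\mathcal{K}$. Properties (i)–(iv) do not involve $\mathcal{K}$, and property (v) with $r=0$ is satisfied vacuously by $X_1=\varnothing$. I will check that the convention $\mathcal{K}=\varnothing$ in the statement is read the same way; it only means that condition (v) is void.

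Now choose a large constant $k'\geq k+1$ (to be fixed below) and set
\[
K := f^{(k)}(k') ,
\]
the $k$-fold composition of $f=f_{t,d,s}$ from Lemma~\ref{lem:FktdAJs_part1}. I may assume $f(x)\geq x$. Applying Lemma~\ref{lem:FktdAJs_part1} successively with $r=0,1,\ldots,k-1$ yields a subhypergraph in $\mathcal{F}_s(k',t,d,\mathcal{A},\mathcal{K},k)$. Each step needs $r<k'$, which holds since $k'>k$. The result has parts $V_1',\ldots,V_t'$ of size $k'$ and a set $X_1\subseteq V_1'$ of size $k$, such that every $v\in X_1$ has link isomorphic to $H_{k',d-1,d-1}$ on $\bigcup_{j\in\mathcal{K}}V_j'$. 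In other words, the link contains all transversal $(d-1)$-sets.

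Next I restrict. Take $V_1'':=X_1$ and, for every $j\neq1$, choose an arbitrary $V_j''\in\binom{V_j'}{k}$, and let $\mathcal{H}'$ be the induced subhypergraph. I then verify each property:
\begin{itemize}
\item[(i)] and (ii) are immediate, since the parts have size $k$ and the restriction is induced.
\item[(iv)] For $J\in\mathcal{A}$, the link of any $v\in X_1$ on $\bigcup_{j\in J}V_j'$ is complete transversal. Restricting it to $k$-subsets of each part leaves a copy of $H_{k,d-1,d-1}$. The same argument applied to $\mathcal{K}$ gives (iv) for the enlarged family $\mathcal{A}\cup\{\mathcal{K}\}$.
\item[(iii)] This is the delicate point, because (iii) is not automatically inherited when part sizes shrink. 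It still holds, because it quantifies over $s$-subsets $A_j$ of the parts, and any $s$-subset of $V_j''$ is also an $s$-subset of $V_j'$. The edge guaranteed in $\mathcal{H}$ lies inside $\{v\}\cup\bigcup A_j$, so it survives the induced restriction. This requires $k\geq s$, which is implicit in the definition of the family.
\item[(v)] With $r=0$, taking $X_1=\varnothing$ again satisfies it vacuously.
\end{itemize}

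The main obstacle is bookkeeping rather than any real difficulty. I need to ensure that Lemma~\ref{lem:FktdAJs_part1} can legitimately be applied $k$ times, which is why I introduce the buffer size $k'>k$ rather than taking $k'=k$. I also need to make $K$ depend only on $k,t,d,s$, which holds because $f_{t,d,s}$ is uniform over $\mathcal{A},\mathcal{K},r$. So $g_{t,d,s}(k):=f_{t,d,s}^{(k)}(k+1)$ works.
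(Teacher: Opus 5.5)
Your proof is correct and follows the paper's argument: you start from $X_1=\varnothing$ and iterate Lemma~\ref{lem:FktdAJs_part1} $k$ times with the fixed index set $\mathcal{K}$. The buffer $k'>k$ and the final restriction are harmless but unnecessary, since the last application only needs $r=k-1<k$. The paper therefore takes $K=f_{t,d,s}^k(k)$ directly and notes that $|X_1|=k=|V_1|$ forces $X_1=V_1$, so that $\mathcal{F}_s(k,t,d,\mathcal{A},\mathcal{K},k)=\mathcal{F}_s(k,t,d,\mathcal{A}\cup\{\mathcal{K}\},\varnothing,0)$.
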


\begin{proof}
Let $f_{t,d,s}$ be the function promised by Lemma~\ref{lem:FktdAJs_part1},
and set $K = f_{t,d,s}^k(k)$.
Let $\mathcal{H}_0 := \mathcal{H}\in \mathcal{F}_s(K,t,d,\mathcal{A},\varnothing,0)$,
and hence $\mathcal{H}_0 := \mathcal{H}\in \mathcal{F}_s(K,t,d,\mathcal{A},\mathcal{K},0)$ by definition.
Then applying Lemma~\ref{lem:FktdAJs_part1} iteratively,
we find a sequence 
$\mathcal{H}_0 \supseteq \mathcal{H}_1 \supseteq \ldots \supseteq \mathcal{H}_k$
such that $\mathcal{H}_i \in \mathcal{F}_s(f_{t,d,s}^{k-i}(k),t,d,\mathcal{A},\mathcal{K},i)$
for every $i\in [k]$.
Hence, if we set $\mathcal{H}':=\mathcal{H}_k$, then we have
$\mathcal{H}' \in \mathcal{F}_s(k,t,d,\mathcal{A},\mathcal{K},k) = \mathcal{F}_s(k,t,d,\mathcal{A}\cup \{\mathcal{K}\},\varnothing,0)$.
\end{proof}

\begin{lemma}\label{lem:FktdAJs_part3}
For all positive integers $k,t,d,s$ with $t\geq d \geq 2$ there exists an integer $K= h_{t,d,s}(k)$
such that the following holds:
Every hypergraph $\mathcal{H}\in \mathcal{F}_s(K,t,d,\varnothing,\varnothing,0)$
contains a hypergraph $\mathcal{H}'\in \mathcal{F}_s(k,t,d,\binom{[t]\setminus \{1\}}{d-1},\varnothing,0)$.
\end{lemma}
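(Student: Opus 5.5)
The plan is to iterate Lemma~\ref{lem:FktdAJs_part2} once for every index set $\mathcal{K}\in\binom{[t]\setminus\{1\}}{d-1}$. This mirrors the way Lemma~\ref{lem:FktdAJs_part2} itself was obtained by iterating Lemma~\ref{lem:FktdAJs_part1}.

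\textbf{Setup.} Fix an enumeration $\mathcal{K}_1,\ldots,\mathcal{K}_N$ of $\binom{[t]\setminus\{1\}}{d-1}$, where $N=\binom{t-1}{d-1}$, and put $\mathcal{A}_i:=\{\mathcal{K}_1,\ldots,\mathcal{K}_i\}$, so that $\mathcal{A}_0=\varnothing$. Let $g_{t,d,s}$ be the function from Lemma~\ref{lem:FktdAJs_part2}. Since it is harmless to enlarge it, we may assume $g_{t,d,s}(k)\geq k$ and that $g_{t,d,s}$ is monotone. Set
\[
K:=h_{t,d,s}(k):=g_{t,d,s}^{N}(k).
\]

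\textbf{Iteration.} Start with $\mathcal{H}_0:=\mathcal{H}\in\mathcal{F}_s(K,t,d,\varnothing,\varnothing,0)$. For $i=1,\ldots,N$, apply Lemma~\ref{lem:FktdAJs_part2} with $\mathcal{A}=\mathcal{A}_{i-1}$, with $\mathcal{K}=\mathcal{K}_i\notin\mathcal{A}_{i-1}$, and with target size $g_{t,d,s}^{N-i}(k)$. This produces subhypergraphs
\[
\mathcal{H}_0\supseteq\mathcal{H}_1\supseteq\cdots\supseteq\mathcal{H}_N,\qquad \mathcal{H}_i\in\mathcal{F}_s\bigl(g_{t,d,s}^{N-i}(k),t,d,\mathcal{A}_i,\varnothing,0\bigr).
\]
The final hypergraph $\mathcal{H}':=\mathcal{H}_N$ then lies in $\mathcal{F}_s(k,t,d,\binom{[t]\setminus\{1\}}{d-1},\varnothing,0)$, which is the claim.

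\textbf{Main obstacle.} The iteration only works if the membership hypotheses of Lemma~\ref{lem:FktdAJs_part2} are met at each step. Two degenerate points need checking.

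\begin{itemize}
\item In the last step $\mathcal{A}_{N-1}$ is a proper subfamily, so the strict inclusion $\mathcal{A}\subset$ required in Lemma~\ref{lem:FktdAJs_part2} holds at every step. The final family $\mathcal{A}_N$ is all of $\binom{[t]\setminus\{1\}}{d-1}$, but it only appears as the output family, where no such restriction applies.
\item The parameters $\mathcal{K}=\varnothing$ and $r=0$ technically violate Definition~\ref{def:FktdAJs}, which asks for $\mathcal{K}$ to be an index set and $r$ to be positive. The intended reading is that condition (v) is vacuous when $r=0$, consistent with how the previous lemma used this notation.
\end{itemize}

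We also need property (iii) to be preserved when passing to induced subhypergraphs on subsets $V_j'\subseteq V_j$ of the parts. This holds because (iii) quantifies over all $s$-subsets of the parts, and restricting the parts only shrinks that range. Property (iv) is preserved by the construction inside Lemma~\ref{lem:FktdAJs_part2}, so nothing beyond bookkeeping is needed.
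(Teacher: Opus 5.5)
Your proposal is correct and essentially matches the paper's proof. The paper also sets $q=\binom{t-1}{d-1}$ and $K=g_{t,d,s}^{q}(k)$, fixes a maximal chain $\varnothing\subset\mathcal{A}_1\subset\cdots\subset\mathcal{A}_q=\binom{[t]\setminus\{1\}}{d-1}$, and applies Lemma~\ref{lem:FktdAJs_part2} iteratively to obtain $\mathcal{H}_i\in\mathcal{F}_s(g_{t,d,s}^{q-i}(k),t,d,\mathcal{A}_i,\varnothing,0)$. Your extra assumptions that $g_{t,d,s}$ is monotone with $g_{t,d,s}(k)\geq k$ are unnecessary, since the sizes match exactly at each step, but they do no harm.
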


\begin{proof}
Let $q:=\binom{t-1}{d-1}$.
Let $g_{t,d,s}$ be the function promised by Lemma~\ref{lem:FktdAJs_part2},
and set $K = g_{t,d,s}^q(k)$.
Let $\mathcal{H}_0 := \mathcal{H}\in \mathcal{F}_s(K,t,d,\varnothing,\varnothing,0)$.
Moreover, let
$$
\varnothing \subset \mathcal{A}_1 \subset \mathcal{A}_2 \subset \ldots \subset \mathcal{A}_q 
= \binom{[t]\setminus \{1\}}{d-1}
$$
be a maximal chain, i.e.~$|\mathcal{A}_i|=i$ for every $i\in [q]$.
Then applying Lemma~\ref{lem:FktdAJs_part2} iteratively,
we find a sequence 
$\mathcal{H}_0 \supseteq \mathcal{H}_1 \supseteq \ldots \supseteq \mathcal{H}_q$
such that $\mathcal{H}_i \in \mathcal{F}_s(g_{t,d}^{q-i}(k),t,d,\mathcal{A}_i,\varnothing,0)$
for every $i\in [q]$.
Hence, if we set $\mathcal{H}':=\mathcal{H}_q$, then we have
$\mathcal{H}' \in \mathcal{F}_s(k,t,d,\mathcal{A}_q,\varnothing,0)$, as desired.
\end{proof}

Now we are ready to prove the main lemma of this subsection.

\begin{proof}[Proof of Lemma~\ref{lem:embedding.ktd}]
Let $k,d,s$ be fixed. We prove the lemma by induction on $t\geq d$.

For the base case, let $t=d$. Let $\mathcal{H}\in \mathcal{F}_s(K,d,d)$.
Then by properties (i) and (iii), we have
$$
e\left( \mathcal{H} \right) 
= \sum_{v\in V_1} e\left( \mathcal{H}_v[V\setminus V_1] \right) 
\geq \sum_{v\in V_1} \frac{1}{s^{d-1}} K^{d-1}
=\frac{1}{s^{d-1}} K^{d} = \frac{1}{s^{d-1}d^d} v(\mathcal{H})^{d}
$$
where the inequality holds by the same argument as in the proof of Lemma~\ref{lem:FktdAJs_part1}.
By Theorem~\ref{thm:erdos.extremal} 
we know that, if $K$ is large enough and hence $v(\mathcal{H})$ is large enough, 
we find a copy of $H_{k,d,d}$ in $\mathcal{H}$.

\medskip

For the induction step, let $t>d$. 
By induction we then know that there exists a constant $K_0$
such that every hypergraph in $\mathcal{F}_s(K_0,t-1,d)$
contains a copy of $H_{k,t-1,d}$. 
Then, let $K=h_{t,d,s}(K_0)$ be the constant promised by Lemma~\ref{lem:FktdAJs_part3}.
Consider any hypergraph $\mathcal{H}\in \mathcal{F}_s(K,t,d)$.
Then by Lemma~\ref{lem:FktdAJs_part3},
we find a subhypergraph 
$\mathcal{H}_0\in \mathcal{F}_s(K_0,t,d,\binom{[t]\setminus \{1\}}{d-1},\varnothing,0)$.
Let $V(\mathcal{H}_0) = V_1\cup \ldots \cup V_t$ be a partition as promised by Definition~\ref{def:FktdAJs}.
Then for every vertex $v\in V_1$ we have 
$(\mathcal{H}_0)_v[V\setminus V_1] \cong H_{K_0,t-1,d-1}$, because of (iv).
Next, let $\mathcal{H}_0' :=\mathcal{H}_0[V\setminus V_1]$ and note that
$\mathcal{H}_0'\in \mathcal{F}_s(K_0,t-1,d,\varnothing,\varnothing,0) = \mathcal{F}_s(K_0,t-1,d)$.
Hence, by induction hypothesis, $\mathcal{H}_0'$ contains a copy $H'$ of 
$H_{k,t-1,d}$. Let its vertex set be $V(H') = V_2' \cup \ldots \cup V_t'$
such that $V_i' \subset V_i$ for every $i\in [t]\setminus \{1\}$.
Additionally, choose $V_1'\subset V_1$ to be any subset of size $k$.
Then $\mathcal{H}[ \bigcup_{i\in [t]} V_i' ] \cong H_{k,t,d}$.
\end{proof}

\subsection{Precise minimum degrees}

\begin{proof}[Proof of Theorem~\ref{thm:some.precise.degrees}]
For (a) note that the graph $K_{s,t}$ has a unique proper $2$-colouring, and all vertices
in its larger vertex class have minimum degree. By Theorem~\ref{thm:general.bound.Delta}
we conclude
$s_{MB}^1(K_{s,t}) \leq 2\min\{s,t\} - 1$, while Theorem~\ref{thm:general.bounds}
gives $2\min\{s,t\} - 1 \leq s_{BM}^1(K_{s,t}) \leq s_{MB}^1(K_{s,t})$.

For (b) note that no matter how the vertices of $W_t$ are coloured properly with $\chi(W_t)$ colours,
there is a colour class which contains only vertices of degree 3. Hence, 
Theorem~\ref{thm:general.bound.Delta} implies
$s_{MB}^1(W_t) \leq 5$. Additionally, Theorem~\ref{thm:general.bounds}
implies $5 \leq s_{BM}^1(W_t) \leq s_{MB}^1(W_t)$.
\end{proof}

\begin{proof}[Proof of Theorem~\ref{thm:some.precise.degrees.biased}]
We start with (a). For every $b\in\mathbb{N}$ and $t\geq 3$, we have 
$s^b_{MB}(C_t) \geq s^b_{BM}(C_t)\geq b+2$ by Theorem~\ref{thm:general.bounds}. Hence, it remains to show that 
$s^b_{MB}(C_t) \leq b+2$. For this, we consider the following simple construction:
Let $k\in\mathbb{N}$ 
such that $t\in \{2k+1,2k+2\}$,
let $q=(b+2)^2k$ and let
$G_0$ by a perfect $q$-are tree
of height $k$ with root $v_0$.
Then for every set $S\subseteq V(G_0)$ of size $b+2$ add a set $A_S$ of $q$ new vertices and also add all possible edges between $S$ and $A_S$; call the resulting graph $G$. As $G_0$ does not contain cycles, we have that 
$G_0 \MBarrow_b C_t$ cannot hold.
As next we show that $G\MBarrow_b C_t$. Then the claimed bound $s^b_{MB}(C_t) \leq b+2$ follows immediately, as every minimal MB-winnable subgraph of $G$
needs to use at least one vertex of one of the sets $A_S$.

In order to show that $G\MBarrow_b C_t$ is true consider the following strategy for Maker: At first, Maker claims only edges of $G_0$ for $(b+2)k$ rounds (details follow). Since in the meantime Breaker cannot claim more than $b(b+2)k$ edges, at most $(b + 1)(b+2)k < q$ edges become claimed in total, and we know that at any moment in these 
$(b+2)k$ rounds, every non-leaf vertex of $G_0$ still has free edges going to some of its children. It thus follows, that within $(b+2)k$ rounds, Maker can easily claim $b+2$ edge-disjoint paths (of length $k$) that all start in $v_0$ and end in a leaf of $G_0$.
Once these paths are claimed, 
denote them with $F_1,\ldots,F_{b+2}$
and let $v_i$ be the endpoint of $F_i$ different from $v_0$, for every $i\in [b+2]$.
We consider two cases for Maker's next moves. If $t$ is even,
then let $S=\{v_1,\ldots,v_{b+2}\}$, 
and note that by construction there exists a set $A_S$ of size $q$ such that there is an edge between every vertex of $S$ and every vertex of $A_S$. Since fewer than $q$ rounds were played so far,
there must be a vertex $w\in A_S$ such that all edges between $S$ and $w$ are still free.
Maker then claims $wv_1$, and in the next round she claims $wv_i$ for some $i\in \{2,\ldots,b+2\}$,
thus finishing a copy of $C_{2k+2}=C_t$.
Otherwise, if $t$ is odd,
Maker basically applies the same strategy, but where in the set $S$ the vertex $v_1$ is replaced with its unique neighbour on the path $F_1$. This way she claims a cycle of length $2k+1$, i.e. a copy of $C_t$.

\medskip

Now let us prove (b).
For every $b\in\mathbb{N}$ and even $t\geq 4$, we have 
$s^b_{MB}(W_t) \geq s^b_{BM}(W_t)\geq 2b+3$ by Theorem~\ref{thm:general.bounds}. Hence, it remains to show that 
$s^b_{MB}(W_t) \leq 2b+3$. For this, we consider the following simple construction:
We fix the Ramsey number
$r=r_2(K_{\max\{t/2,2b+2\}})$,
we set $q=r(b+1)$ and let $\tilde{q} = 3\binom{r}{2b+2} + r + 1$.
We then let $G_0$ be a star with $q$ edges and centre vertex $v_0$.
Then for every set $S\subseteq V(G_0)$ of size $2b+3$ we add a set $A_S$ of $\tilde{q}$ new vertices and also add all possible edges between $S$ and $A_S$; we call the resulting graph $G$. As before, since $G_0$ does not contain cycles, we have that 
$G_0 \MBarrow_b W_t$ cannot hold.
Thus, it only remains to show $G\MBarrow_b W_t$. Then the claimed upper bound
$s^b_{MB}(W_t) \leq 2b+3$ follows with the same argument as in (a).

In order to show $G\MBarrow_b W_t$, consider the following simple strategy for Maker: At first, Maker claims $r$ edges of $G_0$, which is possible since $e(G_0) = r(b+1)$. Immediately afterwards, let $V_0$ denote the set of vertices which are adjacent to $v_0$ in Maker's graph. 
Now Maker ensures that for every $S\subset V_0\cup \{v_0\}$
with $v_0\in S$ and $|S|=2b+3$,
she finds a vertex $w\in A_S$ such that within 3 rounds, she claims $wv_0$ and two further edges between $S$ and $w$. She can do this by the following reason.  Running over all possible $S$, Maker plays at most $3\binom{r}{2b+2}$ further rounds. Hence, whenever Maker fixes any set $S$ as described,
she can still find a vertex $w\in A_S$ such that all edges between $w$ in $S$ are free, since $|A_S| = 3\binom{r}{2b+2} + r + 1$.
Once such a vertex $w$ is fixed,
she can first claim $wv_0$ and afterwards two further edges between $w$ and $S$, since $|S|=2b+3$.

It now remains to show that, by following this strategy, Maker manages to occupy a copy of $W_t$.
This can be done by a Ramsey-type argument. Consider an auxiliary complete graph on the vertex set $V_0$. Colour an edge $xy$ in this graph red if, for some set $S$, there is a vertex $w\in A_S$ such that Maker claims all the edges $v_0w,xw,yw$. Otherwise colour $xy$ blue. Due to Maker's strategy we know that in every subset of $V_0$ of size $2b+2$ there must be at least one red edge, and hence the auxiliary colouring does not contain a blue copy of $K_{2b+2}$.
By the definition of $r$,
we know that then the colouring must contain a red copy of $K_{t/2}$. Let $v_1,\ldots,v_{t/2}$ denote the vertices of such a copy.
Then by the definition of the colouring we know that we find distinct vertices $w_1,\ldots,w_{t/2}$ such that
for every $i\in [t/2]$, Maker claims all the edges $v_0w_i,v_iw_i,v_{i+1}w_{i}$
(where we set $v_{t/2+1}:=v_1$).
Thus, the sequence
$(v_1,w_1,v_2,w_2,\ldots,v_{t/2},w_{t/2})$ represents a cycle of length $t$ in Maker's graph such that all its vertices are connected to $v_0$ by an edge of Maker, i.e.~Maker has a copy of $W_t$.

\medskip

Finally, we prove (c).
For this, let $F$ be a perfect $(b+1)e(T)$-ary tree with the same height as $T$. Then in the first $e(T)$ rounds in the $(1:b)$ game on $F$,
Maker can always pick an arbitrary non-leaf vertex $x$ of $F$ and find a free edge between $x$ and one of its children, since $d_F(x) \geq (b+1)e(T)$. Therefore, Maker can claim a copy of $T$ in the game on $F$ within $e(T)$ rounds in a greedy-kind of way, and we get $F \MBarrow_b T$. Since every minimal MB- or BM-winnable graph contained in $F$ must be a tree, we conclude
$s_{MB}^b(T)=s_{BM}^b(T)=1$.
\end{proof}

\medskip

\section{Concluding remarks and open problems} \label{sec:concluding}

\textbf{Comparing $H$-games and Ramsey graphs.}
In this paper we defined winnable graphs
and introduced the notation $G \MBarrow_b H$ as a natural analogue of
the Ramsey arrow notation.
As pointed out by Proposition~\ref{prop:stealing}, we know that
$G\rightarrow_2 H$ always implies $G\MBarrow_1 H$.
Indeed, the assumption $G\rightarrow_2 H$ ensures that
in the $H$-game on $G$ at least one player must occupy a copy of $H$,
and hence one player must have a strategy to always claim a copy of $H$.
If this player is Maker, then she wins the game. However, if this player is Breaker, 
then Maker (as first player) can simply steal the strategy of Breaker, 
hence claim a copy of $H$ and win the game as well. 

Unfortunately, this strategy stealing argument breaks down when we consider biased games.
Yet, we believe that an analogous statement should be true for $b\geq 2$.

\begin{conjecture}
If $b\in\mathbb{N}$ and $G,H$ are graphs such that
$G\rightarrow_{b+1} H$ holds, then $G \MBarrow_b H$.
\end{conjecture}

\medskip

\textbf{General bounds on minimum degrees.}
In this paper we also introduced the parameters $s_{MB}^b(H)$ and 
$s_{BM}^b(H)$ as a game analogue of the Ramsey parameter $s_{q}(H)$.
We proved general bounds in Theorem~\ref{thm:general.bounds} 
such that in many cases the lower and upper bound are close to each other,
and we determined these parameters precisely in the cases when $H$
is e.g.~a tree, a cycle, a complete bipartite graph or a wheel with an even number of spokes.
Still, regarding our general bounds, there are some natural questions left to be answered. 
In particular, we wonder whether it can make a difference who the first player is.

\begin{problem}
Is it true that for every $b\in \mathbb{N}$ and every graph $H$, we have
$s_{MB}^b(H) = s_{BM}^b(H)$?
\end{problem}

Moreover, we wonder whether for unbiased $H$-games there exist examples $H$ such that neither of 
our general lower and upper bounds is the actual value of $s_{MB}^1(H)$.

\begin{problem}
Is there a graph $H$ such that
$2\delta(H) - 1 \neq s_{MB}^1(H) \neq 2\Delta_c(H) - 1$?
\end{problem}

\medskip

\textbf{Minimal winnable graphs for cliques.}
One of the most interesting graphs to be looked at, 
both in the Ramsey setting and in the positional games setting,
is the clique $K_t$. Theorem~\ref{thm:general.bounds} implies that
$s_{MB}^1(K_t)=2t-3$, while for fixed $b\geq 2$ we know that the order of
$s_{MB}^b(K_t)$ is between $bt$ and $bt\ln(t)$.
It would be interesting to close this gap.

\begin{problem}
Determine $s_{MB}^b(K_t)$ for every $b,t\in\mathbb{N}$.
\end{problem}

Also recall that Burr, Erd\H{o}s and Lov\'asz~\cite{burr1976graphs}
studied other graph parameters next to the minimum degree.
For instance, they proved that the smallest vertex-connectivity among all
minimal $2$-Ramsey graphs for $K_t$ is 3,
and that the maximum degree among all these $2$-Ramsey graphs is unbounded.
Naturally, this leads to the following questions.

\begin{problem}
Determine the smallest vertex-connectivity among all graphs in $\mathcal{M}^1_{MB}(K_t)$.
\end{problem}

\begin{problem}
Is the maximum degree among all graphs in $\mathcal{M}^1_{MB}(K_t)$ unbounded?
If not, what is the largest maximum degree among all graphs in $\mathcal{M}^1_{MB}(K_t)$?
\end{problem}

\bigskip

\textbf{Equivalent graphs for cliques.}
Let us write $G \sim H$ if $\mathcal{M}_2(H)=\mathcal{M}_2(G)$,
i.e.~if $H$ and $G$ have the same $2$-Ramsey graphs,
and call $G$ and $H$ \emph{Ramsey-equivalent} in this case.
Analogously, let us write $G \MBequiv H$ if $\mathcal{M}_{MB}^1(H)=\mathcal{M}_{MB}^1(G)$,
i.e.~if $H$ and $G$ have the same winnable graphs for bias $1$.

Due to a result of Fox et al.~\cite{fox2014ramsey}
it is known that the only connected graph that is Ramsey equivalent with the clique $K_t$
is the clique $K_t$ itself; see also~\cite{clemens2020minimal} for a generalization to more than 2 colours.
Moreover, due to their results, the authors in~\cite{clemens2020minimal,fox2014ramsey} 
asked whether there exist non-isomorphic connected graphs $H_1$ and $H_2$ that are Ramsey-equivalent. The problem is still unsolved.
But in~\cite{clemens2020minimal}
it is shown that the answer is no if "connected" is replaced with "3-connected",
and in~\cite{axenovich2017conditions,savery2022chromatic} further conditions for non-equivalence are proven.
In contrast to this, the analogous question for Maker-Breaker games can be answered in the affirmative
by a simple argument, which we include here. Let $K_t\cdot K_2$ be the graph obtained by attaching an edge to some vertex of $K_t$.

\begin{thm}\label{thm:equivalence}
For all integers $b\geq 1$ and $t\geq 3$, we have $\mathcal{M}_{MB}^b(K_t) = \mathcal{M}_{MB}^b(K_t\cdot K_2)$.
\end{thm}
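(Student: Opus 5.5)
The claim $\mathcal{M}_{MB}^b(K_t) = \mathcal{M}_{MB}^b(K_t\cdot K_2)$ follows once we show that for every graph $G$,
$$G \MBarrow_b K_t \iff G \MBarrow_b K_t\cdot K_2 .$$
Indeed, membership in $\mathcal{M}_{MB}^b(\cdot)$ is defined purely by which graphs (namely $G$ and all its proper subgraphs) are winnable. If the two winnable classes coincide, then so do the minimal ones.

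The direction $G \MBarrow_b K_t\cdot K_2 \Rightarrow G \MBarrow_b K_t$ is trivial, since $K_t\subseteq K_t\cdot K_2$.

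For the converse, assume Maker has a winning strategy $\mathcal{S}$ for the $(1:b)$ $K_t$-game on $G$. The plan is to let Maker follow $\mathcal{S}$ until the round in which she completes a copy $Q$ of $K_t$, and then to show that she can also claim a pendant edge at $Q$.

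First consider the case that $G$ is minimal for the $K_t$-game, which is the case that matters for the claimed set equality. Fix any vertex $v\in V(Q)$ and any edge $e\in E(G)$ incident with $v$ and leaving $V(Q)$.
\begin{itemize}
\item If no such edge exists for some $v$, we need a separate argument. Then every edge at $v$ lies inside $V(Q)$, so $d_G(v)\le t-1$. This contradicts the lower bound $s^b_{BM}(K_t)\ge (b+1)(t-2)+1 > t-1$ from Theorem~\ref{thm:general.bounds}, which applies as $t\ge 3$, combined with $s_{BM}^b\le s_{MB}^b$. Hence in a minimal winnable graph every vertex has degree at least $(b+1)(t-2)+1$.
\item If such an edge exists, we still have to handle that Breaker may already have claimed the edges leaving $V(Q)$.
\end{itemize}

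The cleaner route is to use the degree bound for a modified strategy. Let Maker pick a vertex $v$ and watch its edges that leave $Q$. The vertex $v$ has degree at least $(b+1)(t-2)+1$, and only $t-1$ of its edges lie inside $Q$. So at least $(b+1)(t-2)+1-(t-1)=b(t-2)$ edges at $v$ leave $Q$. Since $b(t-2)\ge b$ with equality only when $t=3$, this is not obviously enough on its own.

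The robust plan is a pigeonhole argument over all $t$ vertices of $Q$. Breaker has claimed at most $b$ edges per Maker edge in total. If every vertex of $Q$ had all of its outside edges claimed by Breaker, then Breaker's edges would have to cover all edges leaving $V(Q)$. For a graph that is merely winnable this may fail to be a contradiction.

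The step I expect to be the main obstacle is therefore this extension step. Here I would switch to a cleaner argument that avoids extending the clique directly. Let $G$ be any graph with $G\MBarrow_b K_t$. Then $G$ contains some $G'\in\mathcal{M}_{MB}^b(K_t)$, so it suffices to prove the implication for such minimal $G'$.

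In $G'$, Maker follows $\mathcal{S}$, but she first reserves an edge. Choose a vertex $v$ and an edge $vu$ of $G'$. The graph $G'-vu$ is Breaker-winnable, so $vu$ matters. The idea is that when Maker completes $Q$, some vertex of $Q$ still has a free outside edge: otherwise the position restricted to $Q$ together with its neighbourhood shows that Breaker claimed all edges leaving $V(Q)$. Their number is at least $t\bigl((b+1)(t-2)+1-(t-1)\bigr)$, and this total must be compared with the number of Breaker's edges that can meet $V(Q)$. Making this comparison precise is the crux, and I would first try it with the exact degree bound from Theorem~\ref{thm:general.bounds}.

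Once the comparison shows that a free outside edge survives, Maker claims it in her next move and completes a copy of $K_t\cdot K_2$. This gives $G'\MBarrow_b K_t\cdot K_2$, hence $G\MBarrow_b K_t\cdot K_2$, and the set equality follows.
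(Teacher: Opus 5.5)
\paragraph{Where your proposal breaks down.}
Your reduction to a minimal graph $G'$ and the bound $\delta(G')\ge (b+1)(t-2)+1$ are correct. However, the step you flag as the crux cannot be completed, and no counting argument will rescue it. When $Q$ is completed, Breaker owns $b$ times the number of rounds played, and nothing bounds the length of the game in terms of $t$ and $b$. Since $\mathcal{S}$ is an arbitrary winning strategy, you have no control over where Breaker's edges lie. He may well own every edge leaving $V(Q)$, and the degree bound only guarantees the fixed number $t\,b(t-2)$ of such edges.

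Even a surviving free edge leaving $V(Q)$ would not suffice. After Maker completes $Q$, Breaker moves before she can claim that edge. You would need an edge leaving $V(Q)$ that Maker already owns, or more than $b$ free ones. Your remark that $G'-vu$ is Breaker-winnable is also never used.

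\paragraph{The missing idea.}
The pendant edge comes from the structure of the winning move, not from degrees, and then neither minimality nor Theorem~\ref{thm:general.bounds} is needed. Suppose Breaker always claims the missing edge of Maker's copies of $K_t^-$ whenever he can. Then the round in which Maker's win becomes unstoppable is one in which her edge $xy$ creates at least $b+1\ge 2$ copies of $K_t^-$. All of them contain $xy$, because before that move no free missing edge existed, and they have pairwise distinct free missing edges.

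Two such copies $Q_1,Q_2$ cannot span the same vertex set: otherwise the missing edge of $Q_2$ would be an edge of $Q_1$, hence Maker's. So when Maker completes $Q_1$, there is some $v\in V(Q_2)\setminus V(Q_1)$. The missing edge of $Q_2$ cannot be both $vx$ and $vy$, so Maker already owns $vx$ or $vy$, and this gives $K_t\cdot K_2$. This is the paper's argument, and it works directly for every winnable $G$.

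\paragraph{A caveat on that argument.}
It tacitly assumes that Breaker blocks. Consider the first round in which some threat survives Breaker's move. The same reasoning then covers a Breaker who leaves open one of several threats created by $xy$. A Breaker who ignores a \emph{single} threat, however, lets Maker complete a $K_t$ with no guaranteed pendant, and that case needs an additional argument.
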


\begin{proof}
Let $G$ be a graph. $G \MBarrow_b K_t\cdot K_2$ implies $G \MBarrow_b K_t$, since $K_t$ is a subgraph of $K_t\cdot K_2$. It remains to show that if $G$ is \emph{MB-winnable} for $K_t$, then it is also \emph{MB-winnable} for $K_t \cdot K_2$.

\smallskip

Assume $G$ is \emph{MB-winnable} for $K_t$ with bias $b$, meaning that Maker has a strategy to win the $K_t$-game on $G$. During that strategy, there must be a round, where Maker creates $b + 1$ immediate threats, so that she can win in the next round. Assume Maker creates those threats in some round by claiming the edge $xy$, i.e.~after this move there are $b+1$ distinct copies of $K_t^-$ (i.e.~clique minus one edge) in Maker's graph, such that all of them contain the edge $xy$ and such that the $b+1$ edges which would complete a $K_t$ are still free. Then Breaker cannot block all of them and Maker can complete one of those copies to a $K_t$ in the next round.

Afterwards, let $v$ be a vertex that is not part of that copy of $K_t$, but that was part of one of the copies of $K_t^-$ Maker used as a threat. Then at least one of the edges $vx$ and $vy$ belongs to Maker's graph, completing a copy of $K_t \cdot K_2$.
\end{proof}

Naturally, this result leads to the following problem.

\begin{problem}
Given $t\in \mathbb{N}$. Characterize all connected graphs $H$
such that $H \MBequiv K_t$.
\end{problem}

Moreover, Fox et al.~\cite{fox2014ramsey} showed that $K_t \not\sim K_t + 2K_{t-1}$,
and later, Bloom and Liebenau~\cite{bloom2018ramsey} proved that $K_t \sim K_t + K_{t-1}$
for every $t\geq 4$.  Motivated by these results, we can ask for the following game analogue.

\begin{problem}
Given $s,t\in \mathbb{N}$ with $s < t$. Determine asymptotically the largest value $k=k(s,t)$
such that $K_t \MBequiv K_t + k K_s$.
\end{problem}

\medskip

\textbf{A game analogue of size-Ramsey numbers.}
Another interesting parameter in Ramsey theory is the \emph{size-Ramsey number},
as introduced by Erd\H{o}s et al.~\cite{erdos1978size}, and defined as
$$
\hat{r}_2(H) := \min\{ e(G):~ G\rightarrow_2 H\} .
$$
Trivially, it holds that $\hat{r}_2(H) \leq \binom{r_2(H)}{2}$,
and this bound is tight for cliques as shown in~\cite{erdos1978size}.
On the other end, it was shown by Beck~\cite{beck1983size}
that the size-Ramsey number of a path $P_n$ grows linearly
with the number $n$ of vertices, resolving a question of
Erd\H{o}s~\cite{erdos1981combinatorial}. The result of Beck has been improved
a couple of times, with the current best bounds being
$(3.75-o(1)) n \leq \hat{r}_2(P_n) \leq 74n$,
proven by Bal and DeBiasio~\cite{bal2022new}, and by Dudek and Pra{\l}at~\cite{dudek2015alternative}.
Turning to games, we can now define an analogue parameter
$$
\hat{r}_{MB}(H) := \min\{ e(G):~ G\MBarrow_1 H\} .
$$
The simple construction in Figure~\ref{fig:size.path.construction} can be used to show that $\hat{r}_{MB}(P_n) \leq 4n-11$ for $n\geq 4$,
but we do not know whether this is asymptotically best possible.

\begin{center}
\begin{figure}[t] 
	\begin{center}
\includegraphics[scale=0.8]{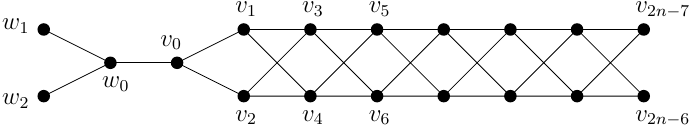}
	\end{center}
	\caption{MB-winnable graph for $P_n$. Strategy: 
    Maker first claims $v_0w_0$. Afterwards, she pairs
    edges that start in the same vertex $v_i$ or $w_i$ and go to vertices of higher index.}
	\label{fig:size.path.construction}
\end{figure}
\end{center}

\begin{problem}
Determine $\hat{r}_{MB}(P_n)$.
\end{problem}

Obviously, the same question can be asked for other graphs such as cycles and wheels. Next to this, we note that Beck~\cite{beck1990size} later asked whether for graphs of bounded maximum degree the size-Ramsey number grows linearly in the number of vertices. This was answered negatively by Rödl and Szemer\'edi~\cite{rodl2000size}, who proved that there exist graphs $H_n$ on $n$ vertices and with maximum degree $3$ such that $\hat{r}(H_n) = \Omega(n\log^{1/60}n)$, and this bound was improved further in~\cite{tikhomirov2024bounded}. However, the analogous Maker-Breaker version of Beck's question was answered positively by Gebauer~\cite{gebauer2013size}.

\medskip

\bibliographystyle{amsplain}
\bibliography{references}

\end{document}